\crefname{section}{section}{sections}
\crefname{subsection}{subsection}{subsections}
\newtheoremstyle{example}{}{}{}{}{\bfseries}{.}{.5em}{}
\newtheorem{theorem}{Theorem}[section]
\crefname{theorem}{Theorem}{Theorems}
\newtheorem{lemma}[theorem]{Lemma}
\crefname{lemma}{Lemma}{Lemmata}
\newtheorem{cor}[theorem]{Corollary}
\crefname{cor}{Corollary}{Corollaries}
\newtheorem{prop}[theorem]{Proposition}
\crefname{prop}{Proposition}{Propositions}
\newtheorem{remark}[theorem]{Remark}
\crefname{remark}{Remark}{Remarks}
\theoremstyle{definition}
\newtheorem{defn}[theorem]{Definition}
\crefname{defn}{Definition}{Definitions}
\theoremstyle{example}
\crefname{ex}{Example}{Examples}
\numberwithin{equation}{section}
\begin{document}
\title{Large-Data Global Well-Posedness for the $\(1 + 2\)$-Dimensional Equivariant Faddeev Model}
\author{Matthew Creek}
\begin{abstract}The Faddeev model is a classical field theory that models heavy elementary particles by knotted topological solitons. It is a generalization of the well-known classical nonlinear sigma model of Gell-Mann and Levy, and is also related closely to the celebrated Skyrme model. The global well-posedness of the quasilinear PDE arising from this model has been studied intensely in recent years, both in three and two spatial dimensions. In this paper we introduce a proof of large-data global well-posedness of the two-dimensional Faddeev model under the equivariant hypothesis.\end{abstract}
\maketitle
\section{Introduction}\label{sec:intro}
The Faddeev model is a classical field theory that models heavy elementary particles by knotted topological solitons. It was introduced by L. D. Faddeev in \cite{MR0495948,MR1553682}, and is intimately related to both the classical nonlinear $\sigma$ model and the Skyrme model (see, for example, \cite{MR2068924} and references therein). This theory concerns the formal critical points for the action functional $L_F = L_F\(\Omega\)$ defined by
\begin{equation}\label{L_F_def}L_F \defined \int_{\R^{1 + 3}}\[\frac{1}{2}\partial_\mu\Omega \cdot \partial^\mu\Omega + \frac{1}{4}\alpha^2\(\partial_\mu\Omega \times \partial_\nu\Omega\) \cdot \(\partial^\mu\Omega \times \partial^\nu\Omega\)\] \, dm\comma\end{equation}
where $\Omega : \(\R^{1 + 3}, m\) \to \(S^2, g\)$, $m$ is the Minkowski metric on $\R^{1 + 3}$, defined by
\begin{equation}\label{minkowski_metric_def}m \defined -dt^2 + \sumc{\mu}{1}{3}{dx_\mu^2}\comma\end{equation}
$g$ is the metric that $S^2$ inherits as a subspace of $\R^3$, $\alpha$ is a coupling constant having the dimension of length, and $\times$ is the cross product in $\R^3$. One should note here that, replacing $\R^{1 + 3}$ with $\R^{1 + n}$ and updating the metric $m$ accordingly, $L_F$ makes sense even in the more general setting of $\R^{1 + n}$. This allows us to define the action functionals for the $\(1 + n\)$-dimensional Faddeev models:
\begin{equation}\label{L_F_n_def}L_{F, n} \defined \int_{\R^{1 + n}}\[\frac{1}{2}\partial_\mu\Omega \cdot \partial^\mu\Omega + \frac{1}{4}\alpha^2\(\partial_\mu\Omega \times \partial_\nu\Omega\) \cdot \(\partial^\mu\Omega \times \partial^\nu\Omega\)\] \, dm\period\end{equation}
The static features of the Faddeev model have been studied extensively (see, for example, \cite{MR1660167,MR1809362}, and references therein). More recently, in \cite{MR2754038} and \cite{arXiv:1307.4721}, the dynamical features of the $\(1 + 2\)$-dimensional Faddeev model have also been investigated. Both of these results concern small-data global regularity for the variational equation corresponding to the $n = 2$ case of \eqref{L_F_n_def}. The former addresses smooth data and the latter addresses rough data in critical Besov spaces. In this paper we study the large-data problem for the $\(1 + 2\)$-dimensional Faddeev model under the equivariant hypothesis
\begin{equation}\label{2d_equivariant_hyp}\ob\begin{array}{l}\Omega = \Omega\(t, r, \omega\) = \(u\(t, r\), \omega\) = \(u, \omega\) \in \[0, \pi\] \times S^1\comma \\ \(u\(t, 0\), \fulllimit{r}{\infty}{u\(t, r\)}\) = \(0, \pi\)\comma \\ m = -dt^2 + dr^2 + r^2 \, d\omega^2\comma \quad \mbox{and} \quad g = du^2 + \sin^2u \, d\omega^2\period\end{array}\right.\end{equation}
Our ultimate goal is to extend the results in \cite{MR2754038} and \cite{arXiv:1307.4721} to the large-data regime. In a connected work, D. Li proved a large-data global regularity result for the classical equivariant Skyrme model. Our approach is to apply the techniques used by Li in \cite{arXiv:1208.4977} to prove a similar result for our problem. Under the equivariant hypothesis \eqref{2d_equivariant_hyp}, the Euler-Lagrange system of equations associated with the $n = 2$ case of the action functional \eqref{L_F_n_def} reduces to the following quasilinear equation for the azimuthal angular variable $u$:
\begin{equation}\label{u_intro_pde}\(1 + \alpha^2r^{-2}\sin^2u\)\(u_{tt} - u_{rr} - r^{-1}u_r\) = -\(2r^{-1}\alpha^2u_r\sin^2u + r^{-2}\[\alpha^2\(u_t^2 - u_r^2\) + 1\]\sin u\cos u\)\comma\end{equation}
with the boundary conditions also specified in \eqref{2d_equivariant_hyp}. Our main result in this paper is the following theorem.
\begin{theorem}\label{main_theorem}Let
\begin{equation}\label{main_theorem_regularity_hypothesis}s \ge 4 \quad \mbox{and} \quad \(u_0, u_1\) \in H^s_\mathrm{rad}\(\R^2\) \times H^{s - 1}_\mathrm{rad}\(\R^2\)\period\end{equation}
Then there is a unique
\begin{equation}\label{main_theorem_u_defn}u \in C_t\(\[0, \infty\), H_{x, \mathrm{rad}}^s\(\R^2\)\) \intersection C_t^1\(\[0, \infty\), H_{x, \mathrm{rad}}^{s - 1}\(\R^2\)\)\end{equation}
which solves \eqref{u_intro_pde} on $\[0, \infty\) \times \R^2$ such that $u$ satisfies
\begin{equation}\label{main_theorem_initial_and_boundary_condition_hypothesis}\(u\(0\), u_t\(0\)\) = \(u_0, u_1\) \quad \mbox{and} \quad \(u\(t, 0\), \fulllimit{r}{\infty}{u\(t, r\)}\) = \(\pi, 0\)\period\end{equation}
\end{theorem}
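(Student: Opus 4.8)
The plan is to derive \Cref{main_theorem} from a local well-posedness theory for \eqref{u_intro_pde} together with a~priori bounds, uniform on every bounded time interval, that rule out finite-time breakdown; this is the scheme Li uses in \cite{arXiv:1208.4977} for the equivariant Skyrme model, whose quasilinear structure and conservation laws run parallel to ours. Writing $c(u) \defined 1 + \alpha^2 r^{-2}\sin^2 u \ge 1$, equation \eqref{u_intro_pde} takes the uniformly hyperbolic form $c(u)\,(\partial_t^2 - \Delta)u = N$, where $\Delta = \partial_r^2 + r^{-1}\partial_r$ is the flat radial Laplacian and $N$ is the right-hand side of \eqref{u_intro_pde}. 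First I would run the standard quasilinear energy method in the (equivariant) radial Sobolev spaces: commute \eqref{u_intro_pde} with derivatives, use that near $r = 0$ the singular terms $r^{-1}u_r$ and $r^{-2}\sin u\cos u$ combine compatibly with the boundary value $u(t,0) = \pi$ so that no genuine singularity is created, and control $c(u)$ and its derivatives by $\|u\|_{L^\infty}$, $\|r^{-1}\sin u\|_{L^\infty}$ and the higher Sobolev norms. For $s \ge 4$ this produces a unique maximal solution on $[0, T^*)$ in the class \eqref{main_theorem_u_defn} attaining the data and the conditions \eqref{main_theorem_initial_and_boundary_condition_hypothesis}, together with the continuation criterion: $T^* = \infty$ unless $\int_0^{T^*}\bigl(\|u_t(t)\|_{L^\infty} + \|u_r(t)\|_{L^\infty} + \|r^{-1}\sin u(t)\|_{L^\infty}\bigr)\,dt = \infty$. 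It therefore suffices to bound this integral on every finite interval.

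Next I would use conservation of energy. Multiplying \eqref{u_intro_pde} by $u_t$ and integrating by parts, using the boundary conditions, one finds that the reduced energy
\[
E(t) \defined \int_0^\infty\Bigl[\bigl(1 + \alpha^2 r^{-2}\sin^2 u\bigr)\bigl(u_t^2 + u_r^2\bigr) + r^{-2}\sin^2 u\Bigr]\,r\,dr
\]
is conserved; in particular $\|u_t(t)\|_{L^2(\R^2)}$, $\|\nabla u(t)\|_{L^2(\R^2)}$, $\|r^{-1}\sin u(t)\|_{L^2(\R^2)}$ and the weighted quantity $\|\alpha\,r^{-1}\sin u\,\sqrt{u_t^2 + u_r^2}\|_{L^2(\R^2)}$ are bounded in terms of $E(0)$ alone. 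From these I would extract a uniform pointwise bound on $u$: with $F(\sigma) \defined \int_0^\sigma \sin^2 v\,dv$ one has $\partial_r F(u) = (\sin^2 u)\,u_r$, so, since $u(t,0) = \pi$, Cauchy--Schwarz pairing the measures $\rho^{-1}\,d\rho$ and $\rho\,d\rho$ gives $|F(u(t,r)) - F(\pi)| \le \bigl(\int_0^\infty \rho^{-1}\sin^2 u\,d\rho\bigr)^{1/2}\bigl(\int_0^\infty \rho\,\sin^2 u\,u_\rho^2\,d\rho\bigr)^{1/2}$; splitting the second factor at $\rho = 1$ and estimating $\int_0^1 \rho^{-1}\sin^2 u\,u_\rho^2\,d\rho \lesssim \alpha^{-2}E(0)$ and $\int_1^\infty \rho\,u_\rho^2\,d\rho \lesssim E(0)$ yields $|F(u(t,r)) - F(\pi)| \lesssim_\alpha E(0)$, and since $F$ is an increasing bijection of $\R$ this gives $\|u(t,\cdot)\|_{L^\infty} \le M(E(0),\alpha)$ for all $t$; in turn the coefficient $c(u)$ is comparable to $1$ as soon as $\|u_r\|_{L^\infty}$ is bounded, since $r^{-1}|\sin u| \lesssim \|u_r\|_{L^\infty} + 1$. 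It should be stressed that the control near $r = 0$ above uses precisely the $\alpha^2$-term, i.e. the Faddeev/Skyrme contribution to the energy; the bare two-dimensional $\sigma$-model energy is scale-invariant, and this step would fail for it.

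The remaining --- and, I expect, hardest --- point is to bound $\|u_t(t)\|_{L^\infty} + \|u_r(t)\|_{L^\infty}$ uniformly on a fixed interval $[0,T]$ (which, by the above, also controls $\|r^{-1}\sin u(t)\|_{L^\infty}$), that is, to preclude concentration. Were this to fail as $t \uparrow T^* < \infty$, then by equivariance and finite speed of propagation the breakdown would localise at the spatial origin $(T^*, 0)$, with a positive amount of energy accumulating there (away from the origin the equation is effectively one-dimensional, where no finite-time breakdown occurs). Following \cite{arXiv:1208.4977}, I would integrate the stress-energy relation over truncated backward light cones to obtain local-energy and flux identities; finiteness of $E$ forces the flux through a shrinking family of cone mantles to zero, and --- this being the crucial point --- since $\alpha$ has the dimension of length, the rescaling $u_\lambda(t,r) = u(T^* + \lambda t, \lambda r)$ centred at the origin turns the coupling into $\alpha/\lambda$, which diverges as $\lambda \to 0$; hence any bounded-energy profile extracted from a blow-up sequence must have vanishing Skyrme density and is therefore trivial, so in fact no energy concentrates at $(T^*, 0)$. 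A small-energy-on-cones continuation argument --- propagating the null derivatives $(\partial_t \pm \partial_r)u$ along the two characteristic families, where the dangerous quadratic term enters only through the null combination $u_t^2 - u_r^2 = (\partial_t u + \partial_r u)(\partial_t u - \partial_r u)$ and the cone-flux bounds supply $L^2$-control of $(\partial_t \pm \partial_r)u$ along characteristics --- then upgrades this to a uniform Lipschitz bound near $(T^*, 0)$, contradicting the breakdown. Carrying this out rigorously, and in particular tracking the $r^{-1}$ and $r^{-2}$ weights uniformly down to $r = 0$, is where essentially all of the work lies.

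Once the uniform bound of the preceding paragraph is in place, the continuation criterion forces $T^* = \infty$, and a Gr\"onwall estimate for the commuted energy --- whose coefficients are controlled by that Lipschitz bound together with the conserved quantities --- propagates the $H^s \times H^{s-1}$ regularity and places $u$ in the class \eqref{main_theorem_u_defn}. Uniqueness follows from the basic energy estimate for the difference of two solutions with the same data, once more using the Lipschitz bound to control the quasilinear coefficient $c(u)$. This completes the proposed proof.
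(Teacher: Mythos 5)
Your first two paragraphs are essentially fine: a quasilinear local theory with a first‑derivative continuation criterion, conservation of the energy (the paper's \eqref{energy_def}), and an a priori $L^\infty$ bound on $u$ extracted from the Skyrme part of the energy by the Cauchy--Schwarz/Bogomolny trick with $F(\sigma)=\int_0^\sigma\sin^2 v\,dv$ are all obtainable, and the paper uses close relatives of each (its local theory and continuation criterion come from H\"ormander after lifting to $\R^{1+4}$, and its quantity $\Phi_1=\int_\pi^u(1+\alpha^2r^{-2}\sin^2y)^{1/2}\,dy$ is precisely the object your $F(u)$ argument is groping toward). The genuine gap is your third paragraph, which you yourself flag as carrying essentially all the weight: the uniform bound on $\|u_t\|_{L^\infty}+\|u_r\|_{L^\infty}$ is not proved there, only programmed. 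The cone‑localised flux identities, the extraction of a blow‑up profile, the rigidity step (``vanishing Skyrme density implies the profile is trivial''), and the small‑energy‑on‑cones regularity upgrade are each substantial theorems even in the semilinear equivariant wave‑maps setting (Shatah--Tahvildar‑Zadeh, Struwe), and none is formulated, let alone verified, for the Faddeev nonlinearity with its derivative term $\alpha^2r^{-2}(u_t^2-u_r^2)\sin u\cos u$ and its $r^{-1}$, $r^{-2}$ weights. The heuristic that the rescaled coupling $\alpha/\lambda$ diverges does not by itself produce a compactness statement for blow‑up sequences. As written, the heart of the theorem --- precluding first‑derivative concentration --- is missing.

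The paper takes a route that avoids this difficulty entirely, and it is worth contrasting. After replacing $u$ by $\pi-u$, it sets $v=r^{-1}(u-\phi)$ and regards $v$ as a radial function on $\R^{1+4}$, so that \eqref{u_intro_pde} becomes the semilinear equation \eqref{v_wave_equation}, to which H\"ormander's local theory applies with the continuation criterion of \Cref{v_continuation_criterion_theorem} (weighted $L^\infty$ control of $v$, $v_t$, $\nabla v$). Following Li's treatment of the equivariant Skyrme model, it then introduces the new dependent variable $\Phi$ built from $r^{-1}\int_\pi^u(1+\alpha^2r^{-2}\sin^2y)^{1/2}\,dy$; the entire point of this substitution is that $\Phi$ solves \eqref{Phi_wave_equation}, whose right‑hand side contains \emph{no derivatives} of $\Phi$ or $v$. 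Energy conservation places $\Phi$ in $H^1(\R^4)$, a four‑stage Strichartz bootstrap upgrades $\Phi$ to $H^4(\R^4)$ uniformly on $[0,T]$, and radial Sobolev embedding then delivers exactly the pointwise first‑derivative bounds that your proposal lacks --- with no light‑cone analysis, profile decomposition, or non‑concentration argument. If you want to complete your outline, the missing Lipschitz bound is precisely what this change of variable is engineered to produce; without it (or an honest execution of the wave‑maps‑style program), the proposal does not yet prove the theorem.
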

Our proof of this theorem proceeds in several steps. In \Cref{sec:lwp_u} we make the classical substitution $u = rv + \phi$ and recast \eqref{u_intro_pde} as a semilinear wave equation for a new unknown $v$ on $\R^{1 + 4}$ instead of $\R^{1 + 2}$. From there we use a slight modification of a classical local-existence theorem of L. H\"ormander that also provides a continuation criterion. In order to work with this continuation criterion, we wish to use a bootstrap technique with Strichartz estimates. However, the equation for $v$ is not amenable to this technique. Therefore, following the work of Li in \cite{arXiv:1208.4977}, we make, in \Cref{sec:der_Phi}, another substitution and replace $v$ with yet another unknown $\Phi$. This $\Phi$ is more amenable to the bootstrap technique, and is also so closely related to $v$ that the estimates we obtain for $\Phi$ generally are applicable to $v$ as well. In \Cref{sec:Y_1} we work directly with $\Phi$ and its wave equation to place it in an $H^1$-type Sobolev space. At this point we introduce the bootstrap technique with the Strichartz estimates and eventually, in \Cref{sec:Y_2} - \Cref{sec:Y_4}, upgrade $\Phi$ to an $H^4$-type Sobolev space. This allows us to place $\Phi$ and all of its first-order derivatives in $L^\infty$, and similarly for $v$. In \Cref{sec:final_steps} we show this is enough to satisfy the continuation criterion for $v$, which proves that the semilinear wave equation for $v$ is globally well-posed. Finally, we appeal to the relationship between $u$ and $v$ to conclude that \eqref{u_intro_pde} is therefore also globally well-posed, proving \Cref{main_theorem}.
\section{Local Well-Posedness for \eqref{u_intro_pde}}\label{sec:lwp_u}
In this section we first try to fit \eqref{u_intro_pde} into the framework provided by Li. To do this, we replace $u$ with $\pi - u$. We then introduce the new unknown $v$ and lift to dimension $\R^{1 + 4}$. After this we apply the (slight modification of the) classical theorem of H\"ormander to get local well-posedness with continuation criterion for $v$. We also introduce a ``toolbox'' of results that we use throughout the remainder of this paper.
\subsection{Derivation of Equation}\label{subsec:der_u_pde}
As indicated above, we replace $u$ with $\pi - u$. We then define
\begin{equation}\label{A_1_def}A_1 = A_1\(t, r\) \defined 1 + \alpha^2r^{-2}\sin^2u\comma\end{equation}
the definition in the limiting case $r = 0$ being the natural one. Next we define the general nonlinearity $\NNN$ by
\begin{equation}\label{NNN_def}\NNN\(u\) \defined -2r^{-1}\(1 - A_1^{-1}\)u_r - r^{-2}A_1^{-1}\[\alpha^2\(u_t^2 - u_r^2\) + 1\]\sin u\cos u\period\end{equation}
With this notation \eqref{u_intro_pde} can be recast as the boundary-value problem
\begin{equation}\label{u_pde}\square u = \NNN\(u\)\comma \quad \(u\(t, 0\), \fulllimit{r}{\infty}{u\(t, r\)}\) = \(\pi, 0\)\period\end{equation}
This is similar to the form used in \cite{arXiv:1208.4977} with parameter $N_1 = 1$. We want to appeal to standard semilinear PDE theory in order to get a local well-posedness result with a continuation criterion. However, we cannot do this directly with \eqref{u_pde} since it, being equivalent to \eqref{u_intro_pde}, is quasilinear. In order to resolve this, we apply a common approach: a change of variable and a lift of two dimensions. We let $\phi : \R^{1 + 4} \to \R$ be a smooth, radial, time-independent, monotone-decreasing cutoff function such that $\phi\(r\) = \pi$ for $r \le 1$ and $\phi\(r\) = 0$ for $r \ge 2$. Then we define the radial map $v : \R^{1 + 4} \to \R$ by
\begin{equation}\label{v_def}v \defined r^{-1}\(u - \phi\)\period\end{equation}
After also introducing $\phi_{< 1} : \R^{1 + 4} \to \R$, a smooth, radial, time-independent, monotone-decreasing cutoff function such that $\phi_{< 1}\(r\) = 1$ for $r \le \frac{1}{2}$ and $\phi_{< 1}\(r\) = 0$ for $r \ge 1$ and defining $\phi_{> 1} \defined 1 - \phi_{< 1}$, we combine \eqref{A_1_def} - \eqref{v_def} to obtain the following wave equation for $v$:
\begin{equation}\label{box_v_with_cutoff_without_u}\square v = r^{-2}v + r^{-1}\laplacian\phi + \phi_{< 1}r^{-1}\NNN\(rv + \pi\) + \phi_{> 1}r^{-1}\NNN\(rv + \phi\)\period\end{equation}
Immediately we see potential problems with the first and third terms on the RHS of \eqref{box_v_with_cutoff_without_u}. We will work initially on the third term. By working carefully with this term, we will also eliminate the $r^{-2}$ factor in the first term. We define the analytic functions $\tilde{F}_i : \R \to \R$, $0 \le i \le 4$ by
\begin{align}
\label{F_tilde_0_def} \tilde{F}_0\(x\) & \defined \alpha^2x^{-2}\sin^2x\comma \\
\label{F_tilde_1_def} \tilde{F}_1\(x\) & \defined x^{-2}\(1 - x^{-1}\sin x\cos x\)\comma \\
\label{F_tilde_2_def} \tilde{F}_2\(x\) & \defined \alpha^2x^{-3}\sin x\(\cos x - x^{-1}\sin x\)\comma \\
\label{F_tilde_3_def} \tilde{F}_3\(x\) & \defined -\alpha^2x^{-1}\sin x\cos x\comma \\
\label{F_tilde_4_def} \tilde{F}_4\(x\) & \defined 2\alpha^2x^{-3}\sin x\(\cos x - x^{-1}\sin x\)\comma
\end{align}
(the $\tilde{F}_i$ being defined at $x = 0$ by their limits) and the operators $\FFF_i$, $1 \le i \le 4$ by
\begin{equation}\label{FFF_i_def}\FFF_1\(v\) \defined v^3\comma \quad \FFF_2\(v\) = v^5\comma \quad \FFF_3\(v\) = v\(v_t^2 - v_r^2\)\comma \quad \mbox{and} \quad \FFF_4\(v\) = rv^4v_r\period\end{equation}
A very careful calculation then yields
\begin{equation}\label{box_v_third_term_with_F_tilde_simplified}\phi_{< 1}r^{-1}\NNN\(rv + \pi\) = \frac{\phi_{< 1}}{1 + \(\tilde{F}_0 \circ rv\)v^2}\sumc{j}{1}{4}{\(\tilde{F}_j \circ rv\)\FFF_j\(v\)} - \phi_{< 1}r^{-2}v\period\end{equation}
\begin{remark}\label{F_j_analyticity}It is clear from their definitions that all of the $\tilde{F}_j$ are even and analytic and, moreover, that $\fulllimit{\abs{x}}{\infty}{\tilde{F}_j^{\(k\)}\(x\)} = 0$ for all $k \ge 0$ and all $j$. This ensures that each $\tilde{F}_j^{\(k\)}$ is bounded and therefore that, for each $k \ge 0$, there is a constant $C_k$ such that $\norm{\tilde{F}_j^{\(k\)}}_{L^\infty\(\R\)} \le C_k$ for all $j$. It also means that we can define $F_j$ implicitly by $\tilde{F}_j = F_j \circ x^2$. Then the above analysis says that each $F_j \circ x^2$ is analytic, smooth, and, for each $k \ge 0$, there is a constant $C_k$ such that $\norm{\(F_j \circ x^2\)^{\(k\)}}_{L^\infty\(\R\)} \le C_k$ for all $j$. This says that, given an integer $k \ge 0$, for any multiindex $\beta$ of order $k$, if $w : \R^{1 + n} \to \R$ has the property that $\gamma \le \beta$ implies $\partial^\gamma w$ is bounded, then there is a number $C_k$ such that $\norm{\partial^\beta\[F_j \circ w^2\]}_{L^\infty\(\R^{1 + n}\)} \le C_k$ for all $j$.\end{remark}
We define the operator $F$ by the rule
\begin{equation}\label{F_def}F\(v\) \defined \frac{\phi_{< 1}}{1 + \(F_0 \circ \(rv\)^2\)v^2}\sumc{j}{1}{4}{\(F_j \circ \(rv\)^2\)\FFF_j\(v\)} + \phi_{> 1}r^{-2}v + r^{-1}\laplacian\phi + \phi_{> 1}r^{-1}\NNN\(rv + \phi\)\end{equation}
so that, upon replacing the $\tilde{F}_j$ with the $F_j$ and combining \eqref{box_v_with_cutoff_without_u}, \eqref{box_v_third_term_with_F_tilde_simplified}, and \eqref{F_def}, we can write the semilinear PDE
\begin{equation}\label{v_wave_equation}\square v = F\(v\)\period\end{equation}
We ultimately want to prove a global well-posedness theorem for \eqref{u_pde}. We do this by proxy, first proving one for \eqref{v_wave_equation}. The first step is to prove local well-posedness for $v$. For this we appeal to a classical result of H\"ormander. Before doing this, though, we establish our notation and basic toolbox of results.
\subsection{Definitions, Estimates, and Notation}\label{subsec:def_est_not}
\begin{remark}[Notation]\label{general_notation}The symbols $\N$ and $\Z_{\ge 0}$ will be used to denote the positive integers and the nonnegative integers, respectively.\end{remark}
\begin{remark}[More notation]\label{estimate_notation}The notation $a \lesssim_{p_1, p_2, \ldots , p_k} b$ will be used to denote $a \le Cb$ where $C$ is a constant that depends upon the parameters $p_1, p_2, \ldots , p_k$. We shall use the notation $a \gtrsim_{p_1, p_2, \ldots , p_k} b$ to mean $b \lesssim_{p_1, p_2, \ldots , p_k} a$. The notations $a \lesssim b$ and $a \gtrsim b$ are used to mean $a \le Cb$ and $b \le Ca$, respectively, where the constant $C$ depends only upon parameters that are considered fixed throughout this entire paper. Finally, the notation $a \simeq b$ will be used to indicate that both $a \lesssim b$ and $b \lesssim a$ are true.\end{remark}
\begin{remark}[Even more notation]\label{function_notation}In this paper we are concerned with the map $u$ (along with $v$ and $\Phi$, both of which will be introduced later), which can be viewed in two different ways. On one hand, it can be viewed as a function $\[0, T\] \times \R^2 \to \R$ of the two variables $t$ and $x$. On the other hand, it can be viewed as an evolution map in a space such as $C_t\(\[0, T\], H_{x, \mathrm{rad}}^s\(\R^2\)\)$. We shall have occasion to use both viewpoints throughout the course of this paper. In the former viewpoint, for a fixed $t \in \[0, T\]$, $u\(t, \cdot\)$ is a radial function of $x$; in the latter viewpoint, this would be expressed simply as $u\(t\)$. Similarly, the two notations $u\(t, x\)$ and $u\(t\)\(x\)$ may be used to mean the same thing. Similar statements hold for $v$ and $\Phi$.\end{remark}
\begin{lemma}[Sobolev embedding lemma]\label{sobolev_embedding_lemma}Let $\(k, n\) \in \Z_{\ge 0} \times \N$, and let $s > \frac{1}{2}n + k$. Then $H^s\(\R^n\) \into L^\infty\(\R^n\) \intersection C^k\(\R^n\)$ (the notation means $H^s\(\R^n\) \subseteq L^\infty\(\R^n\) \intersection C^k\(\R^n\)$ and the inclusion map is continuous). That is to say,
\begin{equation}\label{sobolev_embedding_lemma_estimate}f \in H^s\(\R^n\) \quad \Rightarrow \quad \underset{\order{\beta} \le k}{\sum}\norm{\partial^\beta f}_{L^\infty\(\R^n\)} \lesssim_s \norm{f}_{H^s\(\R^n\)}\period\end{equation}
\end{lemma}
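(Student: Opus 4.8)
The plan is to run the classical Fourier-analytic argument, splitting the frequency integral with the weight $\(1 + \abs{\xi}^2\)^{s/2}$ and applying the Cauchy--Schwarz inequality. First I would note that, for the estimate \eqref{sobolev_embedding_lemma_estimate}, it suffices to treat $f$ in the Schwartz class $\mathcal{S}\(\R^n\)$: since $\mathcal{S}\(\R^n\)$ is dense in $H^s\(\R^n\)$ and the right-hand side of \eqref{sobolev_embedding_lemma_estimate} dominates each of the finitely many terms on its left-hand side, the estimate on $\mathcal{S}\(\R^n\)$ extends to all of $H^s\(\R^n\)$ by a routine Cauchy-sequence argument, which together with the Fourier representation formula used below also supplies the $C^k \intersection L^\infty$ representative of a general $f$.

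Next I would fix $f \in \mathcal{S}\(\R^n\)$ and a multiindex $\beta$ with $\order{\beta} \le k$. Fourier inversion gives $\partial^\beta f\(x\) = c_n\int_{\R^n}e^{ix \cdot \xi}\(i\xi\)^\beta\hat{f}\(\xi\) \, d\xi$, so $\norm{\partial^\beta f}_{L^\infty\(\R^n\)} \le c_n\int_{\R^n}\abs{\xi}^{\order{\beta}}\abs{\hat{f}\(\xi\)} \, d\xi$. Inserting the factor $1 = \(1 + \abs{\xi}^2\)^{-s/2}\(1 + \abs{\xi}^2\)^{s/2}$ and applying Cauchy--Schwarz bounds the last integral by
\[
\(\int_{\R^n}\frac{\abs{\xi}^{2\order{\beta}}}{\(1 + \abs{\xi}^2\)^{s}} \, d\xi\)^{1/2}\(\int_{\R^n}\(1 + \abs{\xi}^2\)^{s}\abs{\hat{f}\(\xi\)}^2 \, d\xi\)^{1/2}\comma
\]
the second factor of which is $\simeq \norm{f}_{H^s\(\R^n\)}$. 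Passing to polar coordinates, the first integral is a dimensional constant times $\int_0^\infty\rho^{2\order{\beta} + n - 1}\(1 + \rho^2\)^{-s} \, d\rho$; this converges near $\rho = 0$ automatically (the exponent $2\order{\beta} + n - 1$ is $\ge 0$ since $n \ge 1$) and converges near $\rho = \infty$ precisely when $2\order{\beta} + n < 2s$. Since $\order{\beta} \le k$ and $s > \tfrac{1}{2}n + k$ by hypothesis, this holds, so the first factor is a finite constant depending only on $s$ (with $n$ and $k$ fixed throughout). Hence $\norm{\partial^\beta f}_{L^\infty\(\R^n\)} \lesssim_s \norm{f}_{H^s\(\R^n\)}$, and summing over the finitely many $\beta$ with $\order{\beta} \le k$ yields \eqref{sobolev_embedding_lemma_estimate}.

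Finally, for the $C^k$ assertion about a general $f \in H^s\(\R^n\)$, the same computation shows that $\abs{\xi}^j\hat{f} \in L^1\(\R^n\)$ for every integer $0 \le j \le k$, because then $2j + n < 2s$. Consequently, for each $\beta$ with $\order{\beta} \le k$, the function $g_\beta\(x\) \defined c_n\int_{\R^n}e^{ix \cdot \xi}\(i\xi\)^\beta\hat{f}\(\xi\) \, d\xi$ is well-defined and bounded; dominated convergence makes $x \mapsto g_\beta\(x\)$ continuous, and differentiation under the integral sign --- legitimate since for $\order{\beta} < k$ the $x$-partials of the integrand are dominated, uniformly in $x$, by $\abs{\xi}^{\order{\beta} + 1}\abs{\hat{f}\(\xi\)} \in L^1\(\R^n\)$ --- shows $g_0 \in C^k\(\R^n\)$ with $\partial^\beta g_0 = g_\beta$. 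Since $g_0 = f$ almost everywhere by Fourier inversion, $f$ has the claimed representative and, by the density argument, satisfies \eqref{sobolev_embedding_lemma_estimate}. I expect no genuine obstacle here, the lemma being entirely classical; the only points meriting care are the convergence threshold $2\order{\beta} + n < 2s$ for the frequency-side weight integral --- which is exactly where the hypothesis $s > \tfrac{1}{2}n + k$ enters --- and the standard bookkeeping that promotes the a priori merely $L^2$ function $f$ to a bona fide element of $C^k\(\R^n\)$.
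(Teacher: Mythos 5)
Your argument is correct: it is the classical Fourier-inversion-plus-Cauchy--Schwarz proof, with the hypothesis $s > \tfrac{1}{2}n + k$ entering exactly where it must (convergence of the weight integral $\int \abs{\xi}^{2\order{\beta}}\(1 + \abs{\xi}^2\)^{-s}\,d\xi$ for $\order{\beta} \le k$), and the passage from the a priori estimate on Schwartz functions to the $C^k \intersection L^\infty$ representative of a general $f \in H^s\(\R^n\)$ is handled properly. The paper does not prove this lemma itself but merely cites a standard reference, and the cited result is established by precisely the argument you give, so there is nothing further to reconcile.
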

\begin{proof}See the proof of Theorem $25$ in \cite{Selberg_-_Math_632}.\end{proof}
\begin{lemma}[Radial Sobolev embedding lemma]\label{radial_sobolev_embedding_lemma}Let $n \ge 2$ be an integer and $f \in H^1_{\mathrm{rad}}\(\R^n\)$. Then
\begin{equation}\label{radial_sobolev_embedding_lemma_r_estimate}\abs{f\(r\)} \lesssim_n \norm{f}_{H^1\(\R^n\)}r^{\frac{1}{2} - \frac{1}{2}n} \quad \mbox{for} \quad  \aev r \in \[1, \infty\)\period\end{equation}
\end{lemma}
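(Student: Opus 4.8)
The plan is to reduce the estimate to the one–dimensional fundamental theorem of calculus in the radial variable, exploiting the decay of the Jacobian weight $r^{n-1}$ that arises in polar coordinates. The starting point is the observation that, for a radial function $f$, one has $\abs{\nabla f(x)} = \abs{f'(\abs{x})}$, so that, up to the $n$–dependent constant $\omega_{n-1}$ (the surface area of the unit sphere $S^{n-1}$),
\begin{equation*}
\norm{f}_{H^1(\R^n)}^2 = \omega_{n-1}\int_0^\infty \bigl( \abs{f(\rho)}^2 + \abs{f'(\rho)}^2 \bigr)\,\rho^{n-1}\,d\rho,
\end{equation*}
and in particular $\int_0^\infty ( \abs{f(\rho)}^2 + \abs{f'(\rho)}^2 )\,\rho^{n-1}\,d\rho \lesssim_n \norm{f}_{H^1(\R^n)}^2$.

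First I would prove the estimate for $f$ a radial Schwartz function. For such $f$ and $r \ge 1$, since $\rho^{n-1}f(\rho)^2 \to 0$ as $\rho \to \infty$, the fundamental theorem of calculus gives
\begin{equation*}
r^{n-1}f(r)^2 = -\int_r^\infty \frac{d}{d\rho}\bigl( \rho^{n-1}f(\rho)^2 \bigr)\,d\rho = -\int_r^\infty \Bigl[ (n-1)\rho^{n-2}f(\rho)^2 + 2\rho^{n-1}f(\rho)f'(\rho) \Bigr]\,d\rho.
\end{equation*}
The key point is that, because $n \ge 2$, the term $(n-1)\rho^{n-2}f(\rho)^2$ is nonnegative, so discarding it only increases the right-hand side; combining this with $2\abs{f(\rho)f'(\rho)} \le \abs{f(\rho)}^2 + \abs{f'(\rho)}^2$ and the weighted bound above yields
\begin{equation*}
r^{n-1}\abs{f(r)}^2 \le \int_r^\infty \rho^{n-1}\bigl( \abs{f(\rho)}^2 + \abs{f'(\rho)}^2 \bigr)\,d\rho \lesssim_n \norm{f}_{H^1(\R^n)}^2,
\end{equation*}
which is \eqref{radial_sobolev_embedding_lemma_r_estimate} for Schwartz $f$ (indeed for every $r > 0$).

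The remaining step is to remove the smoothness assumption by density. Given $f \in H^1_{\mathrm{rad}}(\R^n)$, I would choose radial Schwartz functions $f_k \to f$ in $H^1(\R^n)$; applying the estimate just established to $f_j - f_k$ shows that $\rho \mapsto \rho^{(n-1)/2}f_k(\rho)$ is uniformly Cauchy on $[1,\infty)$, hence $f_k$ converges uniformly there to a continuous function $g$ with $\abs{g(r)} \lesssim_n \norm{f}_{H^1(\R^n)}\,r^{\frac{1}{2}-\frac{1}{2}n}$, while $f_k \to f$ in $L^2(\R^n)$ forces $g = f$ almost everywhere. The computation itself is entirely routine; the only point that requires any care — and the one I would flag as the main (mild) obstacle — is this last step, namely checking that the pointwise bound is enjoyed by the continuous almost-everywhere representative of $f$ produced as the uniform limit, rather than by some arbitrary modification of $f$ on a null set, which is precisely why the statement is phrased with an ``almost every $r$''.
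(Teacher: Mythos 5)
Your argument is correct and is essentially the proof of Strauss's Radial Lemma 1, which is precisely what the paper cites for this statement (it gives no independent proof). The fundamental-theorem-of-calculus identity for $r^{n-1}f(r)^2$, the sign observation that lets you discard the $(n-1)\rho^{n-2}f(\rho)^2$ term, and the density/uniform-Cauchy step passing from radial Schwartz functions to general $f \in H^1_{\mathrm{rad}}(\mathbb{R}^n)$ (with the a.e.\ caveat correctly accounted for) together constitute a complete and standard proof.
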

\begin{proof}See the proof of Radial Lemma 1 of \cite{MR0454365}.\end{proof}
\begin{lemma}[Another radial Sobolev embedding lemma]\label{another_radial_sobolev_embedding_lemma}Let $n \ge 2$ be an integer, $s \in \(\frac{1}{2}, \frac{1}{2}n\)$, and $f \in H^s_{\mathrm{rad}}\(\R^n\)$. Then
\begin{equation}\label{another_radial_sobolev_embedding_lemma_r_estimate}\abs{f\(r\)} \lesssim_{n, s} \norm{f}_{\dot{H}^s\(\R^n\)}r^{s - \frac{1}{2}n} \quad \mbox{for} \quad  \aev r \in \[0, 1\)\period\end{equation}
\end{lemma}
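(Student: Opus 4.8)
The plan is to pass to the Fourier side and exploit the decay of the Fourier transform of the surface measure on $S^{n-1}$, together with a scaling computation that produces the exponent $s - \tfrac{n}{2}$.

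First I would reduce to the case in which $f$ is a radial Schwartz function. Such functions are dense in $H^s_{\mathrm{rad}}(\mathbb{R}^n)$ in both the $H^s$ and the $\dot{H}^s$ norm. Once the inequality is known for the approximants $f_k$, it reads $|f_j(r) - f_k(r)| \lesssim_{n,s} \|f_j - f_k\|_{\dot{H}^s(\mathbb{R}^n)}\, r^{\,s - n/2}$, so the rescaled functions $r \mapsto r^{\,n/2 - s} f_k(r)$ form a Cauchy sequence in $L^\infty(0,\infty)$; they therefore converge uniformly to a bounded function, whence $f_k$ converges, uniformly on compact subsets of $(0,\infty)$, to a function that is continuous on $(0,\infty)$ and satisfies the claimed bound there. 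Since $f \in L^2(\mathbb{R}^n)$, and in fact $f \in L^{2n/(n-2s)}(\mathbb{R}^n)$ by the Sobolev embedding (here the constraint $s \in (\tfrac12,\tfrac n2)$ enters), this continuous limit coincides with $f$ almost everywhere, which accounts for the ``for a.e.\ $r$'' in the statement.

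Next, fix a radial Schwartz function $f$ and a radius $r > 0$. Radiality gives $f(r) = f(r\omega)$ for every $\omega \in S^{n-1}$, so averaging over $S^{n-1}$ and applying Fourier inversion (all dimensional constants absorbed into $c_n$) yields
\begin{equation*}
f(r) \;=\; \frac{c_n}{|S^{n-1}|}\int_{\mathbb{R}^n}\widehat{f}(\xi)\left(\int_{S^{n-1}} e^{i r\omega\cdot\xi}\,d\sigma(\omega)\right)d\xi \;=\; \frac{c_n}{|S^{n-1}|}\int_{\mathbb{R}^n}\widehat{f}(\xi)\,\widehat{d\sigma}(r\xi)\,d\xi .
\end{equation*}
The classical stationary-phase bound for the sphere, $|\widehat{d\sigma}(\eta)| \lesssim_n (1+|\eta|)^{-(n-1)/2}$, then gives
\begin{equation*}
|f(r)| \;\lesssim_n\; \int_{\mathbb{R}^n} |\widehat{f}(\xi)|\,(1+r|\xi|)^{-(n-1)/2}\,d\xi ,
\end{equation*}
and inserting the factor $|\xi|^{-s}\cdot|\xi|^{s}$ and applying the Cauchy--Schwarz inequality,
\begin{equation*}
|f(r)| \;\lesssim_n\; \|f\|_{\dot{H}^s(\mathbb{R}^n)}\left(\int_{\mathbb{R}^n} |\xi|^{-2s}(1+r|\xi|)^{-(n-1)}\,d\xi\right)^{1/2}.
\end{equation*}
Passing to polar coordinates and substituting $\rho = \sigma/r$ turns the last integral into $r^{\,2s-n}\int_0^\infty \sigma^{\,n-1-2s}(1+\sigma)^{-(n-1)}\,d\sigma$, and this one-variable integral converges precisely because $s < \tfrac n2$ controls the integrand near $\sigma = 0$ while $s > \tfrac12$ controls it near $\sigma = \infty$. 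Combining the displays gives $|f(r)| \lesssim_{n,s} \|f\|_{\dot{H}^s(\mathbb{R}^n)}\, r^{\,s - n/2}$, which is the asserted estimate (trivially valid at $r = 0$, where the right-hand side is $+\infty$).

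I expect the only genuine point of substance to be the decay rate $(1+|\eta|)^{-(n-1)/2}$ for $\widehat{d\sigma}$: it is exactly this gain over the trivial estimate $|\widehat{d\sigma}| \le |S^{n-1}|$ that makes the weighted $L^2$ integral finite, and it is what forces the admissible range to be $s \in (\tfrac12,\tfrac n2)$. Everything else is bookkeeping --- the polar-coordinate scaling and the density argument. (Alternatively, one may first apply the dilation $f \mapsto f(r\,\cdot)$, under which $\|f(r\,\cdot)\|_{\dot{H}^s(\mathbb{R}^n)} = r^{\,s - n/2}\|f\|_{\dot{H}^s(\mathbb{R}^n)}$, to reduce the whole statement to the single estimate $|f(1)| \lesssim_{n,s} \|f\|_{\dot{H}^s(\mathbb{R}^n)}$ for radial $f$; proving that estimate is the same Fourier computation with $r = 1$.)
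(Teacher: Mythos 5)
Your argument is correct, and it is essentially the standard proof of this inequality: the paper itself gives no proof but only cites Proposition 1 of Cho--Ozawa, whose argument is the same computation written via the Bessel-function representation of radial Fourier transforms (the bound $|\widehat{d\sigma}(\eta)|\lesssim(1+|\eta|)^{-(n-1)/2}$ being equivalent to the $J_{(n-2)/2}$ decay used there), followed by the same Cauchy--Schwarz and scaling steps. Your density/limiting argument and the verification that $s\in(\tfrac12,\tfrac n2)$ is exactly the convergence condition for the weighted integral are both sound.
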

\begin{proof}See the proof of Proposition 1 of \cite{MR2538202}.\end{proof}
\begin{defn}\label{function_spaces_defs}For each $s \in \R$, $I \subseteq \R$ an interval, and $n \in \N$, we define
\begin{align}
\label{X_s_I_n_def}X_{s, I, n} & \defined C_t\(I, H_{x, \mathrm{rad}}^s\(\R^n\)\) \intersection C_t^1\(I, H_{x, \mathrm{rad}}^{s - 1}\(\R^n\)\)\comma \\
\label{Y_s_I_n_def}Y_{s, I, n} & \defined \ob w \in L_t^\infty\(I, L_{x, \mathrm{rad}}^2\(\R^n\)\) : 0 \le j \le s \Rightarrow \partial_t^jw \in L_t^\infty\(I, H_x^{s - j}\(\R^n\)\)\cb\comma \quad \mbox{and} \\
\label{S_I_n_def}S_{I, n} & \defined I \times \R^n\period
\end{align}
The spaces $X_{s, I, n}$ and $Y_{s, I, n}$ are called our \textit{data spaces} and \textit{solution spaces}, respectively. We put the natural norms on $X_{s, I, n}$ and $Y_{s, I, n}$, given by
\begin{align}
\label{X_s_I_n_norm}& \norm{w}_{X_{s, I, n}} \defined \norm{\norm{w}_{H_x^s\(\R^n\)} + \norm{w_t}_{H_x^{s - 1}\(\R^n\)}}_{L^\infty\(I\)} \quad \mbox{and} \\
\label{Y_s_I_n_norm}& \norm{w}_{Y_{s, I, n}} \defined \sumc{j}{0}{\floor{s}}{\norm{\partial_t^jw}_{L_t^\infty\(I, H_x^{s - j}\(\R^n\)\)}}\comma
\end{align}
respectively. In this paper we shall always use the symbols $I$ and $I^*$ to denote the real intervals $\[0, T\)$ and $\[0, T^*\)$, respectively. The meanings of $T$ and $T^*$ should always be clear from the context.\end{defn}
\begin{defn}\label{indicator_functions_defn}Using the standard notation of an indicator function $\chi_S$ to describe the function whose value is $1$ exactly on the set $S$, we define the following three radial indicator functions $\R^4 \to \R$:
\begin{equation}\label{indicator_functions}\(\chi_0, \chi_1, \chi_\infty\) \defined \(\chi_{\ob r \le 1\cb}, \chi_{\ob\frac{1}{2} \le r \le 2\cb}, \chi_{\ob r \ge \frac{1}{2}\cb}\)\period\end{equation}
\end{defn}
\begin{defn}[Japanese bracket]\label{japanese_bracket_defn}We define the Japanese bracket operator $\jbracket{\cdot}$ in the usual way: by $\jbracket{\cdot} \defined \(1 + \abs{\cdot}^2\)^\frac{1}{2}$. Observe that $0 \le \jbracket{\cdot} \lesssim \chi_0 + \chi_\infty\abs{\cdot}$.\end{defn}
\begin{lemma}[Pointwise Estimates]\label{pointwise_lemma}Let $s > \frac{1}{2}$ and $w \in Y_{s, I, 4} \union H^s\(\R^4\) \union H^s\(S_{I, 4}\)$. Then we have a tetrachotomy of estimates as follows:
\begin{equation}\label{pointwise_lemma_estimates}\abs{w} \lesssim \ob\begin{array}{ll}\lesssim \chi_0r^{s - 2} + \chi_\infty\abs{w} & \frac{1}{2} < s < 1\comma \\ \lesssim \chi_0r^{s - 2} + \chi_\infty r^{-\frac{3}{2}} & 1 \le s < 2\comma \\ \lesssim_\epsilon \chi_0r^{-\epsilon} + \chi_\infty r^{-\frac{3}{2}} & s = 2 \quad \(\epsilon > 0\)\comma \\ \lesssim \jbracket{r}^{-\frac{3}{2}} & 2 < s \le \infty\period\end{array}\right.\end{equation}
\end{lemma}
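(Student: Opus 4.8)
The plan is to recognize \eqref{pointwise_lemma_estimates} as a compilation of the three embedding results \Cref{sobolev_embedding_lemma}, \Cref{radial_sobolev_embedding_lemma}, and \Cref{another_radial_sobolev_embedding_lemma}, and to assemble them over the four ranges of $s$ after reducing to a single radial function on $\R^4$. First I would dispose of the three ambient spaces. If $w \in H^s\(\R^4\)$ there is nothing to reduce. If $w \in Y_{s, I, 4}$ then, by \Cref{function_spaces_defs}, $w \in L_t^\infty\(I, H_{x, \mathrm{rad}}^s\(\R^4\)\)$, so for almost every $t$ the slice $w\(t, \cdot\)$ is a radial $H^s\(\R^4\)$ function with norm $\lesssim \norm{w}_{Y_{s, I, 4}}$ uniformly in $t$. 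If $w \in H^s\(S_{I, 4}\)$ I would extend $w$ to $\R^{1 + 4}$ and invoke the Plancherel-in-$t$ inclusion $H^s\(\R^{1 + 4}\) \into L_t^2\(\R, H_x^s\(\R^4\)\)$ (clear from the Fourier side, since $\abs{\xi}^2 \le \abs{\tau}^2 + \abs{\xi}^2$), so that again $w\(t, \cdot\) \in H_{\mathrm{rad}}^s\(\R^4\)$ for almost every $t$. In every case it then suffices to prove the tetrachotomy for a fixed radial $f \in H_{\mathrm{rad}}^s\(\R^4\)$, with the implicit constant permitted to depend on $\norm{f}$ in the relevant space (harmless, since in each later application of this lemma that norm has already been bootstrapped to be $\lesssim 1$).

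For the fixed-$f$ statement I would argue separately on $\ob r \le 1\cb$ (the support of $\chi_0$) and on $\ob r \ge \frac{1}{2}\cb$ (the support of $\chi_\infty$). On $\ob r \le 1\cb$: for $\frac{1}{2} < s < 2$, \Cref{another_radial_sobolev_embedding_lemma} with $n = 4$ gives $\abs{f\(r\)} \lesssim_s \norm{f}_{\dot{H}^s\(\R^4\)}r^{s - 2}$, which is the $\chi_0$ term in the first two rows; for $s = 2$ I would apply that same lemma with exponent $2 - \epsilon$ for small $\epsilon > 0$, after noting $H^2\(\R^4\) \into \dot{H}^{2 - \epsilon}\(\R^4\)$, to obtain $\abs{f\(r\)} \lesssim_\epsilon r^{-\epsilon}$, the $\chi_0$ term in the third row (the cases $\epsilon \ge \frac{3}{2}$ then follow a fortiori); and for $2 < s \le \infty$ I would use \Cref{sobolev_embedding_lemma} with $\(n, k\) = \(4, 0\)$ to get $\norm{f}_{L^\infty\(\R^4\)} \lesssim_s \norm{f}_{H^s\(\R^4\)}$, hence $\abs{f} \lesssim 1 \simeq \jbracket{r}^{-\frac{3}{2}}$ there. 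On $\ob r \ge \frac{1}{2}\cb$: for $s \ge 1$ we have $f \in H_{\mathrm{rad}}^1\(\R^4\)$, so \Cref{radial_sobolev_embedding_lemma} gives $\abs{f\(r\)} \lesssim \norm{f}_{H^1\(\R^4\)}r^{-\frac{3}{2}}$, the $\chi_\infty$ term in the last three rows; for $\frac{1}{2} < s < 1$ none of the three embedding lemmata is available in this region (\Cref{another_radial_sobolev_embedding_lemma} only reaches $r < 1$, and \Cref{radial_sobolev_embedding_lemma} needs $s \ge 1$), which is exactly why the first row claims only the trivial bound $\abs{f} \lesssim \chi_\infty\abs{f}$ there.

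It then remains to glue the two regional bounds. On the overlap $\ob\frac{1}{2} \le r \le 1\cb$ one has $1 \simeq r^{s - 2} \simeq r^{-\frac{3}{2}}$, so the first three rows follow verbatim; for the last row I would combine $\abs{f} \lesssim 1$ on $\ob r \le 1\cb$ with $\abs{f} \lesssim r^{-\frac{3}{2}}$ on $\ob r \ge 1\cb$ and use the observation in \Cref{japanese_bracket_defn}, which yields $\jbracket{r}^{-\frac{3}{2}} \simeq \chi_0 + \chi_\infty r^{-\frac{3}{2}}$, to repackage this as $\abs{f} \lesssim \jbracket{r}^{-\frac{3}{2}}$.

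I do not expect a genuine obstacle: the lemma is a catalogue of standard radial Sobolev embeddings, and the work is bookkeeping across the cases. The only points that call for care are the endpoint $s = 2$, where one must surrender an $\epsilon$ of regularity to stay inside the hypothesis range of \Cref{another_radial_sobolev_embedding_lemma}, and the spacetime case $w \in H^s\(S_{I, 4}\)$, where one should pass to time slices via the $L_t^2 H_x^s$ inclusion rather than a trace theorem, since a trace costs half a spatial derivative and would degrade the near-origin exponents $r^{s - 2}$ and $r^{-\epsilon}$.
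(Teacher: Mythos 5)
Your proof is correct and follows essentially the same route as the paper's: the paper's own (much terser) proof is exactly this compilation of \Cref{radial_sobolev_embedding_lemma,another_radial_sobolev_embedding_lemma} for the first two rows, the same $\epsilon$-of-regularity concession at the non-included endpoint $s = 2$, and \Cref{sobolev_embedding_lemma} for $2 < s \le \infty$. You merely make explicit the time-slicing reduction and the gluing across the overlap $\frac{1}{2} \le r \le 1$, which the paper leaves implicit.
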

\begin{proof}The first two of these estimates follow directly from the two radial Sobolev embedding estimates of \Cref{radial_sobolev_embedding_lemma,another_radial_sobolev_embedding_lemma}. The third one is at the non-included endpoint of \Cref{another_radial_sobolev_embedding_lemma}, and so we use \Cref{another_radial_sobolev_embedding_lemma} on $Y_{s - \epsilon, I, 4} \union H^{s - \epsilon}\(\R^4\) \union H^{s - \epsilon}\(S_{I, 4}\) \supseteq Y_{s, I, 4} \union H^s\(\R^4\) \union H^s\(S_{I, 4}\)$ for each $\epsilon > 0$. The last of the four estimates also makes use of \Cref{sobolev_embedding_lemma}.\end{proof}
We now introduce the Strichartz estimates. We introduce the general estimates, and then the specific ones that we shall use.
\begin{lemma}[General Strichartz Estimates]\label{strichartz_estimates_general}Suppose $S$ is an operator, $n \ge 2$ is an integer, $w_0, w_1 : \R^n \to \R$, and $w : S_{I, n} \to \R$ is radial and satisfies
\begin{equation}\label{strichartz_estimates_general_pde}\square w = S\(w\) \quad \mbox{and} \quad \(w\(0\), w_t\(0\)\) = \(w_0, w_1\)\end{equation}
on $S_{I, n}$. Then, if $p$, $q$, $\tilde{p}$, $\tilde{q}$, and $\gamma$ are chosen such that
\begin{equation}\label{strichartz_estimates_general_requirements}\begin{array}{lll}1 \le \tilde{p} \le 2 \le p \le \infty\comma & 1 < \tilde{q} \le 2 \le q < \infty\comma \\ \frac{1}{p} + \frac{n - 1}{q} < \frac{n - 1}{2} \ \mbox{or} \ \(p, q\) = \(\infty, 2\)\comma & \frac{1}{\tilde{p}} + \frac{n - 1}{\tilde{q}} > \frac{n + 1}{2} \ \mbox{or} \ \(\tilde{p}, \tilde{q}\) = \(1, 2\)\comma \\ \gamma \ge 0\comma & \frac{1}{p} + \frac{n}{q} = \frac{n}{2} - \gamma = \frac{1}{\tilde{p}} + \frac{n}{\tilde{q}} - 2\comma\end{array}\end{equation}
then $w$ satisfies the estimate
\begin{equation}\label{strichartz_estimates_general_estimate}\norm{w}_{L_t^p\(I, L_x^q\(\R^n\)\)} + \norm{w}_{C_t\(I, \dot{H}_x^\gamma\(\R^n\)\)} + \norm{w_t}_{C_t\(I, \dot{H}_x^{\gamma - 1}\(\R^n\)\)} \lesssim \norm{w_0}_{\dot{H}^\gamma\(\R^n\)} + \norm{w_1}_{\dot{H}^{\gamma - 1}\(\R^n\)} + \norm{S\(w\)}_{L_t^{\tilde{p}}\(I, L_x^{\tilde{q}}\(\R^n\)\)}\period\end{equation}
\end{lemma}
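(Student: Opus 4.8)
The plan is to prove the general Strichartz estimates of \Cref{strichartz_estimates_general} by combining the classical homogeneous and dual-inhomogeneous wave-equation estimates with the Duhamel representation. First I would write the solution to \eqref{strichartz_estimates_general_pde} via Duhamel's formula as $w\(t\) = \cos\(t\abs{D}\)w_0 + \frac{\sin\(t\abs{D}\)}{\abs{D}}w_1 + \int_0^t\frac{\sin\(\(t - \sigma\)\abs{D}\)}{\abs{D}}S\(w\)\(\sigma\)\,d\sigma$, so that the left-hand side splits into the contribution of the free evolution of the data $\(w_0, w_1\)$ and the contribution of the forcing term $S\(w\)$. Since everything is linear in these three pieces, it suffices to estimate each piece separately.

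For the data term, the claim is exactly the statement that the free wave propagators are bounded from $\dot{H}^\gamma \times \dot{H}^{\gamma - 1}$ into $L_t^p L_x^q \intersection C_t\dot{H}_x^\gamma$ with $w_t \in C_t\dot{H}_x^{\gamma - 1}$; the $L_t^pL_x^q$ bound is the classical wave-equation Strichartz estimate of Strichartz, Ginibre--Velo, Keel--Tao, Lindblad--Sogge, valid precisely under the admissibility hypotheses in the first row of \eqref{strichartz_estimates_general_requirements} together with the gap condition $\frac{1}{p} + \frac{n}{q} = \frac{n}{2} - \gamma$, while the $C_t\dot{H}^\gamma$ and $C_t\dot{H}^{\gamma - 1}$ bounds follow from the unitarity of $e^{it\abs{D}}$ on homogeneous Sobolev spaces and dominated convergence for the continuity in $t$. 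For the forcing term, I would apply the Christ--Kiselev lemma (or the standard $TT^*$ argument) to reduce the retarded integral $\int_0^t$ to the untruncated integral $\int_{\R}$, and then invoke the inhomogeneous Strichartz estimate: the same propagator bounds, used in their dual form under the second-row conditions on $\(\tilde{p}, \tilde{q}\)$ and the matching gap identity $\frac{1}{\tilde{p}} + \frac{n}{\tilde{q}} - 2 = \frac{n}{2} - \gamma$, give $\norm{S\(w\)}_{L_t^{\tilde{p}}L_x^{\tilde{q}}}$ on the right. Summing the two contributions and using $\norm{\cdot}_{C_t} \le \norm{\cdot}_{L_t^\infty}$ where needed yields \eqref{strichartz_estimates_general_estimate}.

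The radial and dimensional restrictions ($n \ge 2$, $w$ radial) do not themselves enter the proof in an essential way here—the estimate as stated is just the non-endpoint wave Strichartz estimate, which holds for general (not necessarily radial) data—so the main subtlety, and the step I expect to require the most care, is bookkeeping: verifying that the stated range of exponents in \eqref{strichartz_estimates_general_requirements} is exactly the admissible range (the strict inequality $\frac{1}{p} + \frac{n - 1}{q} < \frac{n - 1}{2}$ is what excludes the forbidden endpoint in low dimensions and is what makes Christ--Kiselev applicable, since $p > \tilde{p}'$ there), and that the three gap conditions are mutually consistent so that the scaling on both sides matches. Since this lemma is entirely standard, the cleanest route is simply to cite it: the result is, for instance, contained in the work of Lindblad--Sogge and Keel--Tao, and I would reference those together with the Christ--Kiselev lemma for the retarded estimate, rather than reproducing the $TT^*$ and stationary-phase machinery.
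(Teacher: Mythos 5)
Your overall architecture (Duhamel decomposition, homogeneous estimate for the data term, Christ--Kiselev plus duality for the retarded term) matches the standard proof, and the paper itself disposes of this lemma by citation. But there is a genuine gap in your argument: the claim that ``the radial and dimensional restrictions do not themselves enter the proof in an essential way'' and that the estimate ``holds for general (not necessarily radial) data'' is false for the exponent range actually asserted in \eqref{strichartz_estimates_general_requirements}. The admissibility condition there is $\frac{1}{p} + \frac{n-1}{q} < \frac{n-1}{2}$, which is strictly \emph{weaker} than the classical wave admissibility condition $\frac{2}{p} + \frac{n-1}{q} \le \frac{n-1}{2}$ used by Lindblad--Sogge and Keel--Tao; the paper points this out explicitly in \Cref{Strichartz_endpoints_remark}. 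For instance, with $n = 4$, $(p,q) = (2,4)$, $\gamma = \tfrac12$, the lemma's hypotheses are satisfied ($\tfrac12 + \tfrac34 < \tfrac32$) but classical admissibility fails ($1 + \tfrac34 > \tfrac32$), and the corresponding estimate is simply false for general data by Knapp-type counterexamples. So the theorems you propose to cite do not cover the stated range, and the $TT^*$/stationary-phase machinery based on the $\frac{n-1}{2}$-rate dispersive decay cannot reach it.

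The missing ingredient is precisely the radial (more generally, angularly regular) improvement of the dispersive/Strichartz estimates --- Klainerman--Machedon/Sterbenz-type results --- which exploit the extra decay of radial free waves away from the light cone to enlarge the admissible range up to the line $\frac{1}{p} + \frac{n-1}{q} = \frac{n-1}{2}$; the paper cites Theorem 1.5 of Sterbenz for exactly this point, and the same remark applies to the dual condition on $(\tilde p, \tilde q)$. Once the homogeneous estimate is known in this improved radial range, your Duhamel/Christ--Kiselev reduction goes through unchanged. (As a side observation, the specific instance actually used later, \Cref{strichartz_estimates_specific} with $n=4$, $\gamma = 1$, $q = \frac{4p}{p-1}$, $(\tilde p, \tilde q) = (1,2)$, happens to lie inside the classical non-radial range as well, so your citations would suffice for that corollary --- but not for the general lemma as stated.)
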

\begin{proof}This is proven in \cite{MR0512086}.\end{proof}
\begin{remark}\label{Strichartz_endpoints_remark}The requirement $\frac{1}{p} + \frac{n - 1}{q} < \frac{n - 1}{2}$ or $\(p, q\) = \(\infty, 2\)$ applies for radial functions on $\R^n$; it replaces the more restrictive general requirement of $\frac{1}{p} + \frac{n - 1}{2q} \le \frac{n - 1}{4}$ which applies for non-radial functions. A similar statement is true also for the requirement $\frac{1}{\tilde{p}} + \frac{n - 1}{\tilde{q}} > \frac{n + 1}{2}$ or $\(\tilde{p}, \tilde{q}\) = \(1, 2\)$. The details justifying this can be found in the proof of Theorem 1.5 of \cite{Sterbenz_-_2005}.\end{remark}
For our purposes we choose $n = 4$ and $\gamma = 1$. These two things in turn, after massaging the general inequalities, lead to the following Strichartz estimates.
\begin{lemma}[Strichartz Estimates]\label{strichartz_estimates_specific}Suppose $S$ is an operator, $w_0, w_1 : \R^4 \to \R$, and $w : S_{I, 4} \to \R$ is radial and satisfies
\begin{equation}\label{strichartz_estimates_pde}\square w = S\(w\) \quad \mbox{and} \quad \(w\(0\), w_t\(0\)\) = \(w_0, w_1\)\end{equation}
on $S_{I, 4}$. Then $w$ satisfies the estimate
\begin{equation}\label{strichartz_estimates_estimate}\norm{w}_{L_t^p\(I, L_x^\frac{4p}{p - 1}\(\R^4\)\)} + \norm{\gradient w}_{Y_{0, I, 4}} + \norm{w_t}_{Y_{0, I, 4}} \lesssim \norm{\gradient w_0}_{L^2\(\R^4\)} + \norm{w_1}_{L^2\(\R^4\)} + \norm{S\(w\)}_{L_t^1\(I, L_x^2\(\R^4\)\)}\comma \quad p \in \[2, \infty\]\period\end{equation}
\end{lemma}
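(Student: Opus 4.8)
The plan is to obtain \Cref{strichartz_estimates_specific} as the specialization of \Cref{strichartz_estimates_general} to the parameters $n = 4$ and $\gamma = 1$, carefully tracking the radial endpoint exceptions already built into that lemma. First I would impose $n = 4$ and $\gamma = 1$ in the scaling identity $\frac{1}{p} + \frac{n}{q} = \frac{n}{2} - \gamma = \frac{1}{\tilde{p}} + \frac{n}{\tilde{q}} - 2$ appearing in \eqref{strichartz_estimates_general_requirements}. Its first half becomes $\frac{1}{p} + \frac{4}{q} = 1$, which forces $q = \frac{4p}{p - 1}$ --- precisely the exponent occurring in \eqref{strichartz_estimates_estimate}. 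For the inhomogeneous term I would take $\(\tilde{p}, \tilde{q}\) = \(1, 2\)$, the admissible endpoint explicitly permitted in \eqref{strichartz_estimates_general_requirements}; it satisfies $\frac{1}{\tilde{p}} + \frac{4}{\tilde{q}} - 2 = 1$ and produces the source norm $\norm{S\(w\)}_{L_t^1\(I, L_x^2\(\R^4\)\)}$.

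Next I would verify that the remaining hypotheses of \eqref{strichartz_estimates_general_requirements} hold for every $p \in \[2, \infty\]$. Since $q = \frac{4p}{p - 1}$ decreases from $8$ to $4$ as $p$ ranges over $\[2, \infty\]$, the range conditions $1 \le \tilde{p} \le 2 \le p \le \infty$ and $1 < \tilde{q} \le 2 \le q < \infty$ are immediate. The admissibility condition on $\(p, q\)$ becomes, after substituting $n = 4$ and $q = \frac{4p}{p - 1}$, the identity $\frac{1}{p} + \frac{3}{q} = \frac{3}{4} + \frac{1}{4p}$, which is at most $\frac{7}{8}$ and hence strictly below $\frac{3}{2}$ for all $p \ge 2$; the condition on $\(\tilde{p}, \tilde{q}\)$ is covered by the exceptional case $\(\tilde{p}, \tilde{q}\) = \(1, 2\)$, and indeed here $\frac{1}{\tilde{p}} + \frac{3}{\tilde{q}} = \frac{5}{2} = \frac{n + 1}{2}$, which is exactly why that exception is needed. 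Finally $\gamma = 1 \ge 0$, and, crucially, $\gamma - 1 = 0$, so no negative-order homogeneous Sobolev space is involved and $\dot{H}^{\gamma - 1} = L^2$.

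It then remains to rewrite the conclusion \eqref{strichartz_estimates_general_estimate} in the notation of \eqref{strichartz_estimates_estimate}. On the left-hand side, $\norm{w}_{C_t\(I, \dot{H}_x^1\)} \gtrsim \norm{\gradient w}_{L_t^\infty\(I, L_x^2\)} = \norm{\gradient w}_{Y_{0, I, 4}}$ and $\norm{w_t}_{C_t\(I, \dot{H}_x^0\)} = \norm{w_t}_{L_t^\infty\(I, L_x^2\)} = \norm{w_t}_{Y_{0, I, 4}}$, using the definition of $Y_{0, I, 4}$ in \eqref{Y_s_I_n_def}--\eqref{Y_s_I_n_norm} together with the Plancherel equivalence $\norm{\gradient f}_{L^2\(\R^4\)} \simeq \norm{f}_{\dot{H}^1\(\R^4\)}$; on the right-hand side the same equivalence gives $\norm{w_0}_{\dot{H}^1\(\R^4\)} \simeq \norm{\gradient w_0}_{L^2\(\R^4\)}$, while $\norm{w_1}_{\dot{H}^0\(\R^4\)} = \norm{w_1}_{L^2\(\R^4\)}$. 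Assembling these reproduces \eqref{strichartz_estimates_estimate} for each fixed $p$.

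The one point that genuinely requires care --- and the step I expect to be the main obstacle --- is that the constant furnished by \Cref{strichartz_estimates_general} depends a priori on the admissible exponents, hence on $p$, whereas \eqref{strichartz_estimates_estimate} claims a single constant uniform over $p \in \[2, \infty\]$. I would handle this by applying the general lemma only at the two endpoints $p = 2$ (with $q = 8$) and $p = \infty$ (with $q = 4$), each directly admissible by the checks above, obtaining two estimates with fixed constants; I would then recover every intermediate $p$ by interpolating the mixed-norm space $L_t^p\(I, L_x^{4p/\(p - 1\)}\(\R^4\)\)$ between $L_t^2\(I, L_x^8\)$ and $L_t^\infty\(I, L_x^4\)$ with parameter $\theta = 1 - \frac{2}{p}$ --- concretely, via Hölder's inequality applied successively in $x$ and then in $t$, so that $\norm{w}_{L_t^p L_x^q} \le \norm{w}_{L_t^2 L_x^8}^{1 - \theta}\norm{w}_{L_t^\infty L_x^4}^\theta$. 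Every other term on both sides of the estimate is independent of $p$, so this yields the uniform constant encoded in the unsubscripted $\lesssim$ of \eqref{strichartz_estimates_estimate}.
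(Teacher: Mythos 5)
Your proposal is correct and follows essentially the same route as the paper, which simply specializes \Cref{strichartz_estimates_general} to $\(n, q, \tilde{p}, \tilde{q}, \gamma\) = \(4, \frac{4p}{p - 1}, 1, 2, 1\)$. Your additional verification of the admissibility conditions and your endpoint-plus-interpolation argument for uniformity of the constant over $p \in \[2, \infty\]$ are both correct and supply details the paper leaves implicit.
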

\begin{proof}It follows from taking $p \in \[2, \infty\]$ and $\(n, q, \tilde{p}, \tilde{q}, \gamma\) = \(4, \frac{4p}{p - 1}, 1, 2, 1\)$ in \Cref{strichartz_estimates_general}.\end{proof}
\subsection{Local Well-Posedness}\label{subsec:lwp_v}
\begin{remark}\label{X_Y_embeddings}For $T > 0$, $n \in \N$, and $s > \frac{1}{2}n + 1$, each $w \in Y_{s, I, n}$ is bounded and continuously differentiable by the Sobolev embedding lemma. Therefore it can be extended at its boundary to form a member of $X_{s, \clos{I}, n}$. Using this extension, we shall regard, for such $T$, $n$, and $s$, $Y_{s, I, n} \into X_{s, \clos{I}, n}$. Similar reasoning shows that $s > \frac{1}{2}n$ implies  $X_{s, I, n}, Y_{s, I, n} \into L^\infty\(S_{\clos{I}, n}\)$.\end{remark}
\begin{theorem}[Local existence for $v$]\label{v_local_existence_theorem}Let
\begin{equation}\label{v_local_existence_hypotheses}s \ge 4 \quad \mbox{and} \quad \(v_0, v_1\) \in H_\mathrm{rad}^s\(\R^4\) \times H_\mathrm{rad}^{s - 1}\(\R^4\)\period\end{equation}
Then there is a $T > 0$ such that there is a $v \in X_{s, \clos{I}, 4}$ which solves the Cauchy problem
\begin{equation}\label{v_local_existence_pde}\square v = F\(v\)\comma \quad \(v\(0\), v_t\(0\)\) = \(v_0, v_1\)\end{equation}
on $S_{I, 4}$.\end{theorem}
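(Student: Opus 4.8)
The plan is to deduce \Cref{v_local_existence_theorem} from the classical local-existence theory for semilinear wave equations, in the form due to H\"ormander: if $G = G\(t, x, w, w'\)$ is $C^\infty$ jointly in all of its arguments, $n \ge 2$ is an integer, $s > \frac{1}{2}n + 1$, $\(v_0, v_1\) \in H^s\(\R^n\) \times H^{s - 1}\(\R^n\)$, and $G\(\cdot, \cdot, 0, 0\)$ lies in $H^{s - 1}\(\R^n\)$ locally uniformly in $t$, then the Cauchy problem $\square v = G\(t, x, v, \partial v\)$ with data $\(v\(0\), v_t\(0\)\) = \(v_0, v_1\)$ has, on some interval $\[0, T\]$, a unique solution in $C_t\(\[0, T\], H^s\(\R^n\)\) \intersection C_t^1\(\[0, T\], H^{s - 1}\(\R^n\)\)$, and this solution persists as long as $\norm{v\(t\)}_{L^\infty} + \norm{\partial v\(t\)}_{L^\infty}$ stays finite. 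With $n = 4$ and $s \ge 4$ the requirement $s > \frac{1}{2}n + 1 = 3$ is met, so everything reduces to verifying that the right-hand side $F\(v\)$ of \eqref{v_wave_equation} is admissible, i.e. that it equals $G\(t, x, v, \partial v\)$ for some smooth, time-independent $G$ with $G\(\cdot, \cdot, 0, 0\) \in H^{s - 1}\(\R^4\)$. The continuation criterion supplied by this theorem is exactly the one exploited in \Cref{sec:final_steps}, so it should be recorded here in convenient form.

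The admissibility check is where the structure already extracted in \eqref{box_v_third_term_with_F_tilde_simplified}--\eqref{F_def} pays off. In the first term of \eqref{F_def}, the denominator $1 + \(F_0 \circ \(rv\)^2\)v^2$ is the quantity $A_1$ of \eqref{A_1_def}, and it satisfies $A_1 \ge 1$ because $\tilde{F}_0 \ge 0$ by \eqref{F_tilde_0_def}; hence it is a smooth, everywhere-positive function. Each $F_j \circ \(rv\)^2 = F_j\(\abs{x}^2 v^2\)$ is the composition of the smooth function $F_j$ of \Cref{F_j_analyticity} with the polynomial $\abs{x}^2 v^2$ in $\(x, v\)$, hence is smooth in $\(x, v\)$. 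And each $\FFF_j\(v\)$ from \eqref{FFF_i_def}, although written with $r$ and $v_r$, becomes a smooth function of $\(x, v, \gradient v\)$ once one uses the radial identities $rv_r = x \cdot \gradient v$ and $v_r^2 = \abs{\gradient v}^2$. Thus the first term of \eqref{F_def} is smooth everywhere, including at $r = 0$; crucially, by the careful bookkeeping in \eqref{box_v_third_term_with_F_tilde_simplified} the apparent $r^{-2}$-singularity at the origin has already been cancelled off (the $\phi_{< 1}r^{-2}v$ there cancelling the near-origin part of the $r^{-2}v$ term in \eqref{box_v_with_cutoff_without_u}). The remaining three terms of \eqref{F_def}, namely $\phi_{> 1}r^{-2}v$, $r^{-1}\laplacian\phi$, and $\phi_{> 1}r^{-1}\NNN\(rv + \phi\)$, are each a smooth function multiplied by a cutoff supported in $\{r \ge \frac{1}{2}\}$ (and $r^{-1}\laplacian\phi$ is moreover supported in $\{1 \le r \le 2\}$); on those regions $r^{-1}$, $r^{-2}$ and, once more because $A_1 \ge 1$, the factor $A_1^{-1}$ appearing in $\NNN$ are all smooth, and the same radial identities make $r^{-1}\NNN\(rv + \phi\)$ smooth in $\(x, v, \gradient v\)$, so these three terms are smooth on $\R^{1 + 4}$ and vanish near the origin. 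Altogether $F\(v\) = G\(t, x, v, \partial v\)$ for a smooth, time-independent $G$, and $G\(\cdot, \cdot, 0, 0\) = F\(0\) = r^{-1}\laplacian\phi + \phi_{> 1}r^{-1}\NNN\(\phi\)$ is smooth and compactly supported, hence lies in $H^{s - 1}\(\R^4\)$; so H\"ormander's theorem applies.

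The one genuine subtlety --- and the reason the introduction speaks of a \emph{slight modification} of H\"ormander's theorem --- is that the identities $rv_r = x \cdot \gradient v$ and $v_r^2 = \abs{\gradient v}^2$ on which the smoothness of $F\(v\)$ rests hold only for radial $v$, so $F\(v\)$ is not itself a smooth function of $\(x, v, \partial v\)$ on the full, non-radial phase space. I plan to get around this in the standard way: extend $F$ to a genuinely smooth, rotation-invariant nonlinearity $G$ on $\R^{1 + 4} \times \R \times \R^5$ --- literally replacing $v_r^2$ by $\abs{\gradient v}^2$ and $rv_r$ by $x \cdot \gradient v$ throughout --- so that $G$ coincides with $F\(v\)$ on radial functions, apply H\"ormander's theorem to $\square v = G\(t, x, v, \partial v\)$ with the given radial data $\(v_0, v_1\)$, and then invoke uniqueness together with the rotation invariance of both the equation and the data to conclude that the solution is radial, and therefore actually solves $\square v = F\(v\)$. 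This yields $v \in X_{s, \clos{I}, 4}$ as required, and, as a byproduct, the uniqueness statement and the $L^\infty$-type continuation criterion that \Cref{sec:final_steps} will use. I expect this bookkeeping --- together with packaging the continuation criterion in precisely the form needed later --- to be the only substantive work: the structural core of the proof is the cancellation-and-smoothness verification just given, which is essentially already contained in \eqref{box_v_third_term_with_F_tilde_simplified}--\eqref{F_def} and in \Cref{F_j_analyticity}.
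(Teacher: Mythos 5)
Your approach coincides with the paper's: its entire proof of this theorem (together with \Cref{v_continuation_criterion_theorem}) is a citation of H\"ormander's Theorem 6.4.11, which is exactly what you propose, and the admissibility verification you add (smoothness of $F$ in $(x, v, \partial v)$ via the radial identities and the rotation-invariance extension, the cancellation of the $r^{-2}v$ singularity near the origin, and $F(0) \in H^{s-1}$) is correct and simply fills in what the paper leaves implicit. One small caution on the companion statement: the continuation criterion actually recorded in \Cref{v_continuation_criterion_theorem} and used in \Cref{sec:final_steps} carries the weight $\jbracket{r}$ on $v$, $v_t$, $\nabla v$ --- reflecting that $\partial_v G$ grows like $r$ through the $F_j \circ (rv)^2$ and $\sin(rv+\phi)$ factors, which is precisely the ``slight modification'' of H\"ormander's theorem --- so the unweighted $\|v\|_{L^\infty} + \|\partial v\|_{L^\infty}$ criterion you state is not quite the one needed later.
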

\begin{theorem}[Continuation criterion for $v$]\label{v_continuation_criterion_theorem}Let $s$, $v_0$, and $v_1$ be as in the hypotheses of \Cref{v_local_existence_theorem} and let $T^*$ be the supremum among all $T > 0$ such that \eqref{v_local_existence_pde} has a solution $v \in X_{s, \clos{I}, 4}$ on $S_{I, 4}$. Then either $T^* = \infty$ (in which case our local solution is in fact a global solution) or $\(\jbracket{r}v, \jbracket{r}v_t, \jbracket{r}\gradient v\) \notin L^\infty\(S_{I^*, 4}\)$ (the notation means at least one of $\jbracket{r}v$, $\jbracket{r}v_t$, $\jbracket{r}\gradient v$ does not belong to $L^\infty\(S_{I^*, 4}\)$).\end{theorem}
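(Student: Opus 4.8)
The plan is the classical H\"ormander-type argument: granting that the weighted $L^\infty$ quantities remain bounded up to $T^*$, one shows the $X_{s, \clos{I}, 4}$-norm of $v$ stays bounded on $[0, T^*)$ and then restarts \Cref{v_local_existence_theorem} to contradict the maximality of $T^*$. So suppose $T^* < \infty$ and, for contradiction, that
\begin{equation*}
M := \norm{\jbracket{r} v}_{L^\infty(S_{I^*, 4})} + \norm{\jbracket{r} v_t}_{L^\infty(S_{I^*, 4})} + \norm{\jbracket{r} \nabla v}_{L^\infty(S_{I^*, 4})} < \infty .
\end{equation*}
The role of the weights is that, since $u = rv + \phi$ and $r \jbracket{r}^{-1} \le 1$, finiteness of $M$ forces $\norm{u}_{L^\infty(S_{I^*, 4})} + \norm{u_t}_{L^\infty(S_{I^*, 4})} + \norm{\nabla u}_{L^\infty(S_{I^*, 4})} \lesssim 1 + M$; this is exactly the pointwise control one needs on the geometric nonlinearity $\mathcal{N}$ of \eqref{NNN_def}, and the powers of $r$ in the weights are present precisely to absorb the factors of $r$ manufactured by the substitution $u = rv$ against the $r^{-1}$, $r^{-2}$ built into $\mathcal{N}$.

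The core is a tame energy estimate for \eqref{v_wave_equation}. For a spatial multi-index $\alpha$ with $\abs{\alpha} \le s - 1$ one has $\square\, \partial^\alpha v = \partial^\alpha F(v)$, so the standard energy identity for $\square$ on $\R^{1 + 4}$ gives
\begin{equation*}
\frac{d}{dt}\Bigl(\norm{\partial_t \partial^\alpha v(t)}_{L^2}^2 + \norm{\nabla \partial^\alpha v(t)}_{L^2}^2\Bigr) \lesssim \norm{\partial^\alpha F(v)(t)}_{L^2}\Bigl(\norm{\partial_t \partial^\alpha v(t)}_{L^2}^2 + \norm{\nabla \partial^\alpha v(t)}_{L^2}^2\Bigr)^{1/2} ,
\end{equation*}
while $\frac{d}{dt}\norm{v(t)}_{L^2}^2 \lesssim \norm{v(t)}_{L^2}^2 + \norm{v_t(t)}_{L^2}^2$ handles the zeroth order. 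Writing $\mathcal{E}(t)^2 := \norm{v(t)}_{H^s(\R^4)}^2 + \norm{v_t(t)}_{H^{s - 1}(\R^4)}^2$ and summing over $\abs{\alpha} \le s - 1$, the matter reduces to the nonlinearity estimate
\begin{equation*}
\norm{F(v)(t)}_{H^{s - 1}(\R^4)} \lesssim_{M, s} 1 + \mathcal{E}(t) , \qquad t \in [0, T^*) .
\end{equation*}
Granting this, a Gr\"onwall argument bounds $\mathcal{E}(t)$ on $[0, T^*)$ by a finite constant $K = K(M, s, T^*, \mathcal{E}(0))$; using $v_{tt} = \Delta v + F(v)$ and its derivatives to recover the time-derivative components, $v$ stays in $X_{s, \clos{I}, 4}$ with a uniform bound on that interval.

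The nonlinearity estimate is the technical heart of the argument; it is carried out region by region, as dictated by the cutoffs in \eqref{F_def}. On $\{ r \le 1 \}$, the support of $\phi_{<1}$, there is no $r$-growth and no negative power of $r$, so the summands are the composites $F_j \circ (rv)^2$ — which, by \Cref{F_j_analyticity}, are analytic and bounded with derivatives controlled whenever $rv$ has bounded derivatives of the corresponding order — multiplied by the polynomials $\mathcal{F}_j(v)$ in $(v, v_t, v_r)$ and divided by $1 + (F_0 \circ (rv)^2) v^2 \ge 1$. On $\{ r \ge \tfrac12 \}$, the support of $\phi_{>1}$, every negative power of $r$ occurring — in $\phi_{>1} r^{-2} v$, in the fixed smooth compactly supported term $r^{-1}\Delta\phi$, and inside $\phi_{>1} r^{-1}\mathcal{N}(rv + \phi)$ — is bounded together with its derivatives; moreover, substituting $u_t = r v_t$ and $u_r = v + r v_r + \phi_r$ into \eqref{NNN_def}, a careful expansion shows that every factor of $r$ produced by $u = rv$ is matched by an $r^{-1}$ or $r^{-2}$ already present, the remainder being controlled by $r^k \jbracket{r}^{-k} \le 1$, by $M$, and by the facts that $A_1 \ge 1$ there and that $A_1^{-1}$ and its derivatives are bounded there. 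In every case one is reduced to estimating $\norm{G(w)}_{H^{s - 1}}$ and $\norm{w_1 \cdots w_k}_{H^{s - 1}}$ for smooth $G$ with bounded derivatives and for $w, w_i$ built from $v, v_t, v_r$ times bounded $r$-factors and cutoffs — quantities that lie in $L^\infty(S_{I^*, 4})$ with norm $\lesssim 1 + M$ and in $H^{s - 1}(\R^4)$ with norm $\lesssim \mathcal{E}(t)$ — so the Moser (Gagliardo--Nirenberg) product and composition inequalities deliver the claimed bound, the additive constant arising from $r^{-1}\Delta\phi$.

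Finally, let $\delta = \delta(K, s) > 0$ be an existence time furnished by the (H\"ormander-type) \Cref{v_local_existence_theorem}, valid for every Cauchy datum of norm $\le K$ in $H^s_{\mathrm{rad}}(\R^4) \times H^{s - 1}_{\mathrm{rad}}(\R^4)$; such a uniform time exists because H\"ormander's theorem produces an existence time depending only on an upper bound for the data, and \eqref{v_wave_equation} is autonomous. Choosing $t_0 \in (T^* - \delta, T^*)$ and solving \eqref{v_local_existence_pde} with initial time $t_0$ and data $(v(t_0), v_t(t_0))$, of norm $\le K$, produces a solution on $[t_0, t_0 + \delta] \times \R^4$ which, by uniqueness, agrees with $v$ on the overlap and hence glues to a solution of \eqref{v_local_existence_pde} in $X_{s, [0, t_0 + \delta], 4}$; since $t_0 + \delta > T^*$, this contradicts the definition of $T^*$. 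Hence, when $T^* < \infty$, at least one of $\jbracket{r} v$, $\jbracket{r} v_t$, $\jbracket{r} \nabla v$ fails to lie in $L^\infty(S_{I^*, 4})$, which is the assertion. I expect the main obstacle to be precisely this nonlinearity estimate — the bookkeeping that recasts $\phi_{>1} r^{-1}\mathcal{N}(rv + \phi)$ and the compactly supported composites $F_j \circ (rv)^2$ as bona fide (weighted) semilinear nonlinearities amenable to Moser's inequality with the pointwise bound $M$.
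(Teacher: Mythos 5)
Your argument is correct and is essentially the paper's own proof: the paper disposes of \Cref{v_local_existence_theorem,v_continuation_criterion_theorem} in one line by citing Theorem 6.4.11 of H\"ormander (with a ``slight modification'' to accommodate the $\jbracket{r}$ weights), and what you have written out --- higher-order energy identity, tame Moser estimate for $F(v)$ with constants depending on the weighted $L^\infty$ bound $M$, Gr\"onwall, and a restart using a uniform existence time --- is precisely the standard proof of that theorem adapted to \eqref{v_wave_equation}. The only part left schematic on both sides is the region-by-region tame estimate for $F(v)$, which you correctly identify as the place where the weights are actually used.
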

\begin{proof}[Proof of \Cref{v_local_existence_theorem,v_continuation_criterion_theorem}]Together these two theorems comprise a slight modification of a result of L. H\"ormander, which is proven in Theorem 6.4.11 of \cite{MR1466700}.\end{proof}
It is natural to wonder at this point what all of these results about $v$ imply about $u$, which was our original object of study. Indeed, we only introduced $v$ to help solve a problem about $u$. The first simple thing to note in this connection is the following lemma.
\begin{lemma}\label{u_pde_vs_v_pde}Let $s \ge 4$, $T > 0$, and $v \in X_{s, \clos{I}, 4}$. Then $u$ satisfies \eqref{u_pde} on $S_{I, 2}$ if and only if $v$ satisfies \eqref{v_local_existence_pde} on $S_{I, 4}$.\end{lemma}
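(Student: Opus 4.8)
The plan is to recognize that this lemma merely records that every step of the reduction carried out in \Cref{subsec:der_u_pde} is a reversible pointwise identity, and that each such identity remains valid for $v$ of the stated regularity ($v \in X_{s, \clos{I}, 4}$, $s \ge 4$) rather than just for smooth $v$. Throughout, $u$ and $v$ are linked by \eqref{v_def}, i.e.\ $u = rv + \phi$.

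First I would dispose of the boundary conditions. Given $v \in X_{s, \clos{I}, 4}$ with $s \ge 4$, set $u \defined rv + \phi$. Since $s \ge 4 > \frac{1}{2}\cdot 4$, \Cref{X_Y_embeddings} gives that $v$ is bounded on $S_{\clos{I}, 4}$, so $u\(t, 0\) = \phi\(0\) = \pi$; and \Cref{radial_sobolev_embedding_lemma}, applied to $v\(t\) \in H^1_{\mathrm{rad}}\(\R^4\)$, gives $\abs{v\(t, r\)} \lesssim r^{-3/2}$ for $r \ge 1$, so $rv \to 0$ and hence $\fulllimit{r}{\infty}{u\(t, r\)} = 0$. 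Thus the two boundary conditions in \eqref{u_pde} hold automatically for $u = rv + \phi$, and it remains only to match the PDE $\square u = \NNN\(u\)$ with the PDE $\square v = F\(v\)$ under the substitution $u = rv + \phi$ (equivalently $v = r^{-1}\(u - \phi\)$).

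Next I would record the identity underlying the lift: for a radial profile $f = f\(r\)$ one has $\(\partial_r^2 + r^{-1}\partial_r\)\(rf\) = r\(\partial_r^2 + 3r^{-1}\partial_r\)f + r^{-1}f$, and therefore, since $\phi$ is time-independent, $\square u = \square\(rv + \phi\) = r\square v - r^{-1}v - \laplacian\phi$, where $\square$ on the left is the $\(1 + 2\)$-dimensional wave operator, $\square$ on the right the $\(1 + 4\)$-dimensional one, and $\laplacian$ is the two-dimensional radial Laplacian $\partial_r^2 + r^{-1}\partial_r$ appearing in \eqref{box_v_with_cutoff_without_u}. Because $s \ge 4$, every quantity occurring here — and in both PDEs — is an honest function lying, at worst, in $C_t\(\clos{I}, L^2_{\mathrm{loc}}\(\R^4\)\)$: in particular $\laplacian v$ is a function and, by \Cref{F_j_analyticity}, the functions $\tilde{F}_j \circ rv$ and $F_j \circ \(rv\)^2$ are bounded. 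Hence inserting $u = rv + \phi$ into $\square u = \NNN\(u\)$ turns \eqref{u_pde} into \eqref{box_v_with_cutoff_without_u}, as an identity valid in the sense of distributions on $S_{I, 4}$. The remaining steps of \Cref{subsec:der_u_pde} — the equality $\phi_{< 1}\NNN\(rv + \pi\) = \phi_{< 1}\NNN\(rv + \phi\)$, which holds because $\phi \equiv \pi$ on the support of $\phi_{< 1}$; the ``very careful calculation'' producing \eqref{box_v_third_term_with_F_tilde_simplified}; the cancellation of the $-\phi_{< 1}r^{-2}v$ term against part of the $r^{-2}v$ term; and the replacement $\tilde{F}_j = F_j \circ x^2$ of \Cref{F_j_analyticity} — are again pointwise identities between functions defined almost everywhere on $S_{I, 4}$, and they convert \eqref{box_v_with_cutoff_without_u} into $\square v = F\(v\)$, i.e.\ \eqref{v_wave_equation}, which is the PDE in \eqref{v_local_existence_pde}. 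Reading the whole chain in reverse yields the converse implication, so the asserted equivalence follows.

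I expect the only genuine subtlety to be the legitimacy of these regroupings near $r = 0$: individually, terms such as $r^{-2}v$ and $r^{-1}\NNN\(rv + \pi\)$ are singular there, and one must check that for $v$ merely in $X_{s, \clos{I}, 4}$ the cancellations that produce the manifestly regular right-hand side \eqref{F_def} are genuine — i.e.\ that one is not subtracting two quantities that are each only formally defined. This is exactly the place where the hypothesis $s \ge 4$ is used: it gives $v, rv \in C^1$ near the origin, which together with the uniform boundedness of the $F_j$ from \Cref{F_j_analyticity} makes all the cancelled terms well-defined. Once this is in hand, the lemma is immediate.
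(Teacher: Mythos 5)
Your proposal is correct and follows essentially the same route as the paper: the PDE equivalence is the (reversible) derivation of \eqref{v_wave_equation} from $\square u = \NNN\(u\)$ via $u = rv + \phi$, and the boundary conditions come for free from $v \in L^\infty$ at the origin and the radial decay $\abs{v} \lesssim r^{-\frac{3}{2}}$ at infinity. Your added discussion of the lift identity and the near-origin cancellations only makes explicit what the paper dismisses as "immediate."
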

\begin{proof}The $\(\Rightarrow\)$ direction is immediate, since we derived \eqref{v_local_existence_pde} directly from the $\square u = \NNN\(u\)$ statement that is part of \eqref{u_pde}. For the $\(\Leftarrow\)$ direction, we assume $v$ satisfies \eqref{v_local_existence_pde}. Then, as before, we know $u$ satisfies $\square u = \NNN\(u\)$. Since $v \in X_{s, \clos{I}, 4}$, \Cref{X_Y_embeddings} tells us $v \in L^\infty\(S_{\clos{I}, 4}\)$. Hence, $u\(t, 0\) = rv\(t, 0\) + \phi\(0\) = \pi$. For large $r$, we use the fact that $v \in X_{s, \clos{I}, 4}$ implies $v \in H^1\(S_{\clos{I}, 4}\)$ and so, by \cref{pointwise_lemma}, $\abs{v} \lesssim r^{-\frac{3}{2}}$. It follows then that $\abs{u} \lesssim r^{-\frac{1}{2}}$, which implies $\fulllimit{r}{\infty}{u\(t, r\)} = 0$. Thus, $u$ satisfies \eqref{u_pde}.\end{proof}
\begin{lemma}\label{v_in_H^s_iff_u_in_H^s}Let $s \ge 4$ and $t \in I$. Then $v\(t\) \in H^s_\mathrm{rad}\(\R^4\) \iff u\(t\) \in H^s_\mathrm{rad}\(\R^2\)$.\end{lemma}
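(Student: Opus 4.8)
The plan is to transfer the statement from $\mathbb{R}^2$ to $\mathbb{R}^4$ by means of the substitution $u = rv + \phi$ from \eqref{v_def}. The cutoff $\phi$ is a fixed smooth radial function which, as a function on $\mathbb{R}^2$, equals $\pi$ for $r\le 1$ and $0$ for $r\ge 2$, so $\phi \in H^\infty_{\mathrm{rad}}(\mathbb{R}^2)$; hence $u(t)\in H^s_{\mathrm{rad}}(\mathbb{R}^2)$ if and only if $rv(t)\in H^s_{\mathrm{rad}}(\mathbb{R}^2)$, and the lemma is equivalent to the assertion that $v(t)\in H^s_{\mathrm{rad}}(\mathbb{R}^4)$ if and only if $rv(t)\in H^s_{\mathrm{rad}}(\mathbb{R}^2)$. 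Since multiplication by each of the smooth bounded cutoffs $\phi_{<1},\phi_{>1}$ is bounded on every $H^\sigma$, $\sigma\ge 0$, and $\phi_{<1}+\phi_{>1}\equiv 1$, it suffices to prove this equivalence separately on the outer region $\{r\ge\tfrac12\}$ and the inner region $\{r\le 1\}$.

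On the outer region the equivalence is elementary: writing the Sobolev norm of a radial function through its radial derivatives weighted by $r^{n-1}$ and using Leibniz's rule $\partial_r^{\,j}(rv)=r\,\partial_r^{\,j}v+j\,\partial_r^{\,j-1}v$, the passage from two to four dimensions is compensated exactly by the factor $r$ — already at the $L^2$ level both $\|rw\|_{L^2(\mathbb{R}^2)}$ and $\|w\|_{L^2(\mathbb{R}^4)}$ are comparable to $\big(\int_0^\infty w^2 r^3\,dr\big)^{1/2}$ for radial $w$ — while the lower-order terms produced by Leibniz's rule are harmless because on $\{r\ge\tfrac12\}$ the weights $r$, $r^3$ and the factors $r^{\pm1}$ are all comparable; a short bootstrap then gives $\|\phi_{>1}rv\|_{H^s(\mathbb{R}^2)}\simeq\|\phi_{>1}v\|_{H^s(\mathbb{R}^4)}$. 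On the inner region the implication $v(t)\in H^s_{\mathrm{rad}}(\mathbb{R}^4)\Rightarrow rv(t)\in H^s_{\mathrm{rad}}(\mathbb{R}^2)$ is almost as easy: in the Leibniz identity the first summand is absorbed by the same weight identity, and the contribution of $j\,\partial_r^{\,j-1}v$ to the $\mathbb{R}^2$-norm is comparable to $\int_{\{r\le1\}}|\partial_r^{\,j-1}v|^2|x|^{-2}\,dx$ computed on $\mathbb{R}^4$, which is bounded by $\|\partial_r^{\,j-1}v\|_{\dot H^1(\mathbb{R}^4)}^2\lesssim\|v\|_{H^s(\mathbb{R}^4)}^2$ via Hardy's inequality on $\mathbb{R}^4$ — only one power $|x|^{-2}$ to absorb, and $4>2$, so this costs nothing.

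The real content is the reverse implication on the inner region, $rv(t)\in H^s_{\mathrm{rad}}(\mathbb{R}^2)\Rightarrow v(t)\in H^s_{\mathrm{rad}}(\mathbb{R}^4)$. Here $v=r^{-1}(u-\pi)$ on $\{r\le1\}$ (since $\phi\equiv\pi$ there), and differentiating produces terms $r^{\,i-j}\partial_r^{\,i}(u-\pi)$ with $i\le j$, whose $\mathbb{R}^4$-norms become $\mathbb{R}^2$-integrals $\int_{\{r\le1\}}|\partial_r^{\,i}(u-\pi)|^2|x|^{-2(j-i)}\,dx$ carrying singular weights; Hardy's inequality on $\mathbb{R}^2$ is critical and cannot control the worst of them. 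The way out is that $u-\pi$ is forced to vanish to high order at the origin. The boundary condition $u(t,0)=\pi$ gives $\partial_r u(t,0)=0$, and — the key point — the equation $u_{tt}=\Delta_x u+\mathcal{N}(u)$ (which is \eqref{u_pde}, with $\mathcal{N}$ as in \eqref{NNN_def}, valid for the solution by \Cref{u_pde_vs_v_pde}) forces the next coefficient to vanish: since $u(t)\in H^s(\mathbb{R}^2)\subset C^2(\mathbb{R}^2)$ for $s\ge 4$, set $c=\tfrac12\partial_r^2 u(t,0)$; then $\Delta_x u(t,0)=4c$, and using $\sin u\cos u=(u-\pi)+O((u-\pi)^3)$ near $u=\pi$ together with $u_r(t,0)=u_t(t,0)=0$ one finds $\mathcal{N}(u)(t,0)=-c$, so that evaluating the equation at $r=0$ and using $u_{tt}(t,0)=0$ (as $u(t,0)\equiv\pi$) gives $0=4c-c$, whence $\partial_r^2 u(t,0)=0$. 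Iterating the same Taylor matching at higher orders removes $\partial_r^{2k}u(t,0)$ for as many $k$ as the regularity $s$ allows, so $u-\pi$ vanishes at $r=0$ to the order needed to make all the singular weighted integrals finite by direct estimation; this yields $\|\phi_{<1}v\|_{H^s(\mathbb{R}^4)}\lesssim\|\phi_{<1}rv\|_{H^s(\mathbb{R}^2)}$, and combining the four pieces proves the lemma.

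The step I expect to be the main obstacle is precisely this last one. It genuinely uses the equation — the equivalence is \emph{false} for arbitrary radial functions of the given regularity: if $u-\pi$ equals $r^2$ times a cutoff, then $r^{-1}(u-\pi)$ behaves like $|x|$ near the origin, which does not lie in $H^3(\mathbb{R}^4)$. For $s=4$, the decisive case, only the single relation $\partial_r^2 u(t,0)=0$ is required and $u(t)\in C^2(\mathbb{R}^2)$ makes its derivation rigorous; for larger $s$ one must push the Taylor matching further and verify, order by order, that the regularity supplied by $u(t)\in H^s$ suffices both to justify the matching and to bound the Taylor remainders that appear in the weighted integrals on $\{r\le1\}$. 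This bookkeeping at the origin, rather than any single inequality, is the delicate part.
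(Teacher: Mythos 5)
Your overall architecture (reduce to $v\leftrightarrow rv$, split at $r\sim 1$, handle the outer region by weight-counting and the inner forward direction by Hardy on $\mathbb{R}^4$) is sound, and your observation that the equivalence is \emph{false} for arbitrary radial $H^4(\mathbb{R}^2)$ functions vanishing at the origin (e.g. $u-\pi=r^2\chi$) is correct and important. But the step you yourself flag as decisive does not close as written. After you use the equation to force $\partial_r^2u(t,0)=0$, the best pointwise information available from $u(t)\in H^4(\mathbb{R}^2)$ is $u(t)\in C^{2,\gamma}$ for $\gamma<1$ only (the embedding $H^4(\mathbb{R}^2)\hookrightarrow C^{2,1}$ fails), hence $|u-\pi|\lesssim r^{2+\gamma}$. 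The worst term in $\partial_r^4\bigl(r^{-1}(u-\pi)\bigr)$ is $24r^{-5}(u-\pi)$, whose contribution to $\|v\|_{H^4(\mathbb{R}^4)}^2$ is $\int_0^1|u-\pi|^2r^{-7}\,dr$; this converges under ``direct estimation'' only if $|u-\pi|\lesssim r^{3+\epsilon}$, i.e. it needs $\gamma>1$. So the Taylor matching plus direct estimation leaves a genuine gap exactly at $s=4$, the case you call decisive. To close it one must replace direct estimation by a chain of weighted Hardy inequalities ($\int w^2r^{-7}dr\lesssim\int|w'''|^2r^{-1}dr\lesssim\int|w''''|^2r\,dr$), and the last, two-dimensional step is the critical-exponent Hardy inequality, which fails for general $g\in H^1(\mathbb{R}^2)$ and requires an additional vanishing of $\partial_r^3u$ at the origin that neither your Taylor matching (which controls only the even coefficients via the equation) nor the bare $H^4$ regularity supplies.

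Two further remarks. First, the paper does not prove this lemma at all: it cites Lemma~1.3 of \cite{MR1278351} (Shatah--Tahvildar-Zadeh), where the norm equivalence is obtained from the structure of equivariant maps into the sphere --- the regularity hypothesis is placed on the full map $\Omega=(\sin u\,\omega,\cos u)$, which automatically encodes the vanishing of the offending Taylor coefficients --- not from the evolution equation. Second, your equation-based route, even if completed, could not justify the way the lemma is actually used in the proof of \Cref{u_local_existence_theorem}: there it is applied at $t=0$ to raw initial data $(u_0,u_1)$, before any solution (hence any equation) exists, and in the direction $u_0\in H^s(\mathbb{R}^2)\Rightarrow v_0\in H^s(\mathbb{R}^4)$, which is precisely the hard one. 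So the compatibility conditions must come from the geometric/equivariant structure of the data, not from the PDE.
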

\begin{proof}This is the content of Lemma 1.3 of \cite{MR1278351}.\end{proof}
The upshot of \Cref{u_pde_vs_v_pde,v_in_H^s_iff_u_in_H^s} is that a $v \in X_{s, \clos{I}, 4}$ ($s \ge 4$) that solves \eqref{v_local_existence_pde} on $S_{I, 4}$ for some initial data $\(v_0, v_1\) \in H^s_{\mathrm{rad}}\(\R^4\) \times H^{s - 1}_{\mathrm{rad}}\(\R^4\)$ corresponds to a $u \in X_{s, \clos{I}, 2}$ that solves \eqref{u_pde} on $S_{I, 2}$ for corresponding initial data, with the boundary conditions coming for free.
\begin{theorem}[Local existence for $u$]\label{u_local_existence_theorem}Let
\begin{equation}\label{u_local_existence_hypotheses}s \ge 4 \quad \mbox{and} \quad \(u_0, u_1\) \in H_\mathrm{rad}^s\(\R^2\) \times H_\mathrm{rad}^{s - 1}\(\R^2\)\period\end{equation}
Then there is a $T > 0$ such that there is a $u \in X_{s, \clos{I}, 2}$ which solves the Cauchy problem
\begin{equation}\label{u_local_existence_pde}\square u = \NNN\(u\)\comma \quad \(u\(t, 0\), \fulllimit{r}{\infty}{u\(t, x\)}\) = \(\pi, 0\)\comma \quad \(u\(0\), u_t\(0\)\) = \(u_0, u_1\)\end{equation}
on $S_{I, 2}$.\end{theorem}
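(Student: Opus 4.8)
The plan is to obtain \Cref{u_local_existence_theorem} as an essentially immediate consequence of the three preceding results, by transporting both the data and the solution through the substitution $u = rv + \phi$ (equivalently $v = r^{-1}(u - \phi)$) used to derive \eqref{v_wave_equation}. First I would push the initial data forward: put $v_0 \defined r^{-1}(u_0 - \phi)$ and $v_1 \defined r^{-1}u_1$, the factor $\phi$ being absent from the latter because $\phi$ is time-independent, so that $v_t = r^{-1}u_t$. The cutoff $\phi$ is smooth and compactly supported as a radial function on $\R^4$, so adjoining it does not affect Sobolev regularity; thus \Cref{v_in_H^s_iff_u_in_H^s}, applied at order $s$ (together with the analogous statement at the lower order $s - 1$), gives $(v_0, v_1) \in H^s_{\mathrm{rad}}(\R^4) \times H^{s - 1}_{\mathrm{rad}}(\R^4)$. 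One uses here that the data is compatible with the boundary conditions, i.e. $u_0(0) = \pi$ and $u_1(0) = 0$, which is exactly what makes $u_0 - \phi$ and $u_1$ vanish at $r = 0$ and hence $v_0$, $v_1$ genuine elements of $H^s(\R^4)$, $H^{s - 1}(\R^4)$.

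With $(v_0, v_1)$ in hand, \Cref{v_local_existence_theorem} furnishes a $T > 0$ and a solution $v \in X_{s, \clos{I}, 4}$ of $\square v = F(v)$ with $(v(0), v_t(0)) = (v_0, v_1)$ on $S_{I, 4}$; that is, $v$ solves \eqref{v_local_existence_pde}. Set $u \defined rv + \phi$. The hypotheses of \Cref{u_pde_vs_v_pde} ($s \ge 4$, $T > 0$, $v \in X_{s, \clos{I}, 4}$) are met, so that lemma converts the statement that $v$ solves \eqref{v_local_existence_pde} on $S_{I, 4}$ into the statement that $u$ solves \eqref{u_pde} on $S_{I, 2}$; in particular the boundary conditions $(u(t, 0), \fulllimit{r}{\infty}{u(t, r)}) = (\pi, 0)$ come for free, exactly as in the proof of \Cref{u_pde_vs_v_pde}. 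Reading \Cref{v_in_H^s_iff_u_in_H^s} in the reverse direction (again at orders $s$ and $s - 1$) upgrades $v \in X_{s, \clos{I}, 4}$ to $u \in X_{s, \clos{I}, 2}$, and by construction $(u(0), u_t(0)) = (rv_0 + \phi, rv_1) = (u_0, u_1)$. These facts together are precisely the assertions comprising the Cauchy problem \eqref{u_local_existence_pde}, which completes the proof.

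I do not expect a serious obstacle here: the analytic weight has already been carried — the careful derivation of \eqref{v_wave_equation}, the H\"ormander-type local-existence-with-continuation machinery behind \Cref{v_local_existence_theorem,v_continuation_criterion_theorem}, and the dimensional-lift dictionary of \Cref{u_pde_vs_v_pde,v_in_H^s_iff_u_in_H^s} — and this theorem is only a corollary of those. The one step that genuinely needs care is the first one: checking that the lifted data $(v_0, v_1)$ really lands in $H^s_{\mathrm{rad}}(\R^4) \times H^{s - 1}_{\mathrm{rad}}(\R^4)$. This hinges on compatibility of the data with the boundary conditions, since without $u_0(0) = \pi$ the function $r^{-1}(u_0 - \phi)$ fails to be even locally $H^1$ near the origin on $\R^4$, and likewise for $r^{-1}u_1$ without $u_1(0) = 0$. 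The cleanest remedy is to build $u_0(0) = \pi$ and $u_1(0) = 0$ into the data hypotheses of \Cref{main_theorem} and \Cref{u_local_existence_theorem} explicitly — equivalently, to require $u_0 - \phi$ and $u_1$ to vanish at the origin within $H^s_{\mathrm{rad}}(\R^2)$ and $H^{s - 1}_{\mathrm{rad}}(\R^2)$ — after which \Cref{v_in_H^s_iff_u_in_H^s} applies as stated. The time-regularity then transfers for free, the coefficients $r$ and $\phi$ in the substitution being independent of $t$.
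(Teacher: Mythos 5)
Your proof is correct and follows essentially the same route as the paper: push the data forward via the substitution $v = r^{-1}(u - \phi)$, invoke \Cref{v_in_H^s_iff_u_in_H^s} to land in the hypotheses of \Cref{v_local_existence_theorem}, and transport the resulting $v$ back to $u$ via \Cref{u_pde_vs_v_pde}. (Your choice $v_1 = r^{-1}u_1$ is in fact the consistent one, since $\phi$ is time-independent; the paper writes $v_1 = r^{-1}(u_1 - \phi)$, which appears to be a slip, and your remark about the compatibility condition $u_0(0) = \pi$, $u_1(0) = 0$ being needed for the lifted data to lie in $H^s(\R^4) \times H^{s-1}(\R^4)$ is a fair point that the paper leaves implicit in its citation for \Cref{v_in_H^s_iff_u_in_H^s}.)
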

\begin{proof}Starting from \eqref{u_local_existence_pde} and letting $\(v_0, v_1\) \defined \(r^{-1}\(u_0 - \phi\), r^{-1}\(u_1 - \phi\)\)$, we arrive, making use of \Cref{v_in_H^s_iff_u_in_H^s}, at the hypotheses of \Cref{v_local_existence_theorem}. Thus we get a unique solution $v \in X_{s, \clos{I}, 4}$ on $S_{I, 4}$ to \eqref{v_local_existence_pde}. This $v$, by \Cref{u_pde_vs_v_pde}, gives rise to a $u \in X_{s, \clos{I}, 2}$ that solves \eqref{u_local_existence_pde} on $S_{I, 2}$. The uniqueness of this $u$ also follows from the uniqueness of $v$ in combination with \Cref{u_pde_vs_v_pde}.\end{proof}
\begin{theorem}[Global existence for $u$]\label{u_global_existence_theorem}Let $s$, $u_0$, and $u_1$ be as in the hypotheses of \Cref{u_local_existence_theorem}. Then, for all $T > 0$, there is a $u \in X_{s, \clos{I}, 2}$ that solves \eqref{u_local_existence_pde} on $S_{I, 2}$.\end{theorem}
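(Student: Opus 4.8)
The argument occupies \Cref{sec:der_Phi}--\Cref{sec:final_steps}; we outline it here. The plan is to deduce global existence for $u$ from global existence for the semilinear equation \eqref{v_wave_equation}, obtaining the latter by showing that the blow-up alternative of \Cref{v_continuation_criterion_theorem} is never activated. Fix $T > 0$ and put $(v_0, v_1) = (r^{-1}(u_0 - \phi), r^{-1}(u_1 - \phi))$, which by \Cref{v_in_H^s_iff_u_in_H^s} lies in $H^s_{\mathrm{rad}}(\R^4) \times H^{s-1}_{\mathrm{rad}}(\R^4)$. By \Cref{u_pde_vs_v_pde,v_in_H^s_iff_u_in_H^s}, it suffices to produce a $v \in X_{s, \clos{I}, 4}$ solving \eqref{v_local_existence_pde} on $S_{I, 4}$, since such a $v$ gives rise to the desired $u \in X_{s, \clos{I}, 2}$ solving \eqref{u_local_existence_pde}. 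By \Cref{v_local_existence_theorem} such a $v$ exists on a maximal interval $[0, T^*)$, and \Cref{v_continuation_criterion_theorem} states that either $T^* = \infty$ or one of $\jbracket{r}v$, $\jbracket{r}v_t$, $\jbracket{r}\gradient v$ fails to lie in $L^\infty(S_{I^*, 4})$. So the whole task reduces to an a priori bound: for every $T_0 < T^*$, the functions $\jbracket{r}v$, $\jbracket{r}v_t$, and $\jbracket{r}\gradient v$ are bounded in $L^\infty(S_{[0, T_0], 4})$ by a constant depending only on the data and on $T_0$, with at most exponential growth in $T_0$. When $T^* < \infty$ such a bound is uniform over $T_0 < T^*$ and thus contradicts the second alternative; hence $T^* = \infty$, and the global $v$ then yields, via \Cref{u_pde_vs_v_pde,v_in_H^s_iff_u_in_H^s}, the desired global $u \in X_{s, \clos{I}, 2}$ solving \eqref{u_local_existence_pde} for the arbitrary $T$ fixed at the outset.

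As noted in \Cref{sec:intro}, \eqref{v_wave_equation} is not in a form amenable to a direct Strichartz bootstrap: the nonlinearity $F(v)$ of \eqref{F_def} carries the high-order terms from \eqref{FFF_i_def}, notably the quintic $v^5$ and $rv^4v_r$, which overshoot the $H^1$-level estimates available. Following Li's treatment of the equivariant Skyrme model in \cite{arXiv:1208.4977}, I would introduce in \Cref{sec:der_Phi} a further change of unknown, replacing $v$ by a $\Phi$ whose wave equation is amenable to the four-dimensional radial Strichartz estimates of \Cref{strichartz_estimates_specific} and which remains so closely tied to $v$ that bounds on $\Phi$ transfer to $v$. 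In \Cref{sec:Y_1} a plain energy estimate together with Gr\"onwall's inequality then places $\Phi \in Y_{1, [0, T_0], 4}$ with a bound controlled by the data and by $T_0$.

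The core of the proof, carried out in \Cref{sec:Y_2}--\Cref{sec:Y_4}, is a bootstrap. Starting from $\Phi \in Y_{1, [0, T_0], 4}$, I would apply \Cref{strichartz_estimates_specific} on short subintervals, estimating the $\Phi$-nonlinearity in $L_t^1 L_x^2$ via H\"older by pairing the mixed Strichartz norms $L_t^p L_x^{4p/(p-1)}$ already controlled against the pointwise decay of \Cref{pointwise_lemma} (the $r^{-\epsilon}$-type control near $r = 0$ and the $r^{-3/2}$ decay near $r = \infty$), and using \Cref{F_j_analyticity} to absorb the bounded analytic coefficients $F_j \circ (rv)^2$; closing the estimate on each subinterval and summing over a finite partition of $[0, T_0]$ then controls $\Phi$ in $Y_{2, [0, T_0], 4}$. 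Differentiating the $\Phi$-equation and repeating the procedure upgrades $\Phi$ successively to $Y_{3, [0, T_0], 4}$ and then $Y_{4, [0, T_0], 4}$. Since $s \ge 4$ exceeds $\frac{1}{2}n + 1$ in dimension $n = 4$, \Cref{sobolev_embedding_lemma} and \Cref{X_Y_embeddings} then place $\Phi$, $\gradient\Phi$, and $\Phi_t$ in $L^\infty(S_{[0, T_0], 4})$; transferring through the close relationship between $\Phi$ and $v$ and invoking \Cref{pointwise_lemma} once more to restore the $\jbracket{r}$ weights yields the a priori bound of the first paragraph, which is the content of \Cref{sec:final_steps}.

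The main obstacle is this bootstrap. The nonlinearity $F(v)$ assembles a cubic $v^3$, a quintic $v^5$, a null-type term $v(v_t^2 - v_r^2)$, and the weighted term $rv^4v_r$, each carrying a bounded analytic factor, together with the terms $\phi_{> 1}r^{-2}v$, $r^{-1}\laplacian\phi$, and $\phi_{> 1}r^{-1}\NNN(rv + \phi)$; forcing all of these, and the corresponding terms in the $\Phi$-equation, into $L_t^1 L_x^2$ using only the Strichartz norms available at the current regularity level, with constants that stay bounded on finite time intervals, is where essentially all the analytic content resides. The remaining ingredients --- the reduction to $v$, the energy estimate for $\Phi$, and the passage back to $u$ --- are by comparison routine.
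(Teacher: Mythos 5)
Your proposal is correct and follows essentially the same route as the paper: reduction to the lifted semilinear problem for $v$, the $\Phi$ substitution, an energy argument placing $\Phi$ in $Y_{1}$, and a Strichartz-based upgrade of $\Phi$ to $Y_{4}$ that verifies the continuation criterion of \Cref{v_continuation_criterion_theorem} and transfers back to $u$ via \Cref{u_pde_vs_v_pde,v_in_H^s_iff_u_in_H^s}. The only immaterial difference is that the paper closes each regularity upgrade directly on the whole finite interval --- the inhomogeneities in the wave equations for $\Phi_t$, $\Phi_{tt}$, $\Phi_{ttt}$ are linear in the new unknown with coefficients already controlled at the previous stage, so no subinterval partition or continuity argument is needed --- and the terms of $F(v)$ enter only through the initial-data norms $\norm{\Phi_{tt}(0)}_{L^2}$, etc., not through the time-integrated $L_t^1L_x^2$ Duhamel terms.
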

\begin{remark}\label{u_gwp_remark}\Cref{u_global_existence_theorem} is the main goal of this paper. An outline of the strategy for obtaining this result is as follows. We shall prove this theorem in the special case of $s = 4$, since then the case $s > 4$ clearly follows by reduction to the $s = 4$ case. We begin with the problem for $u$ that we were initially interested in: \eqref{u_local_existence_pde}. We convert this into the new problem \eqref{v_local_existence_pde} for $v$. \Cref{v_local_existence_theorem} gives us a local solution for $v$, and we have its continuation criterion from \Cref{v_continuation_criterion_theorem}. If we can satisfy this criterion, then we have a global solution $v \in X_{s, \clos{I}, 4}$ on $S_{I, 4}$ to \eqref{v_local_existence_pde} for all $T > 0$. Then by \Cref{u_pde_vs_v_pde}, we also have a global solution $u \in X_{s, \clos{I}, 2}$ on $S_{I, 2}$ to \eqref{u_local_existence_pde} for all $T > 0$, our goal. Therefore, for the remainder of this paper, we proceed as follows. We define $T^*$ as in the statement of \Cref{v_continuation_criterion_theorem}. This leads to two possibilities: (i) $T^* = \infty$, and (ii) $T^* < \infty$. We want to prove that (i) is actually the case. In order to do this, we shall let $T > 0$ be given, and show that $T < T^*$. It will follow from this that $T^* = \infty$. The remainder of this paper is a proof that $T < \infty \Rightarrow T < T^*$. Thus, from here forth, we assume $T < \infty$ and therefore that $I$ is a finite interval.\end{remark}
\section{Derivation of $\Phi$ Function}\label{sec:der_Phi}
In this section we derive the ``change of variable'' function $\Phi$. We do this because the wave equation satisfied by $v$, \eqref{v_wave_equation}, is not amenable to Strichartz estimates, which are the main tools we wish to use in our analysis. To derive $\Phi$, we follow very closely the derivation of the $\Phi$ function from \cite{arXiv:1208.4977}. We will first introduce, in sequence, $\tilde{\Phi}_1$ , $\Phi_1$, and $\Phi_2$. Each of these is a step toward $\Phi$. Finally, we will introduce $\Phi$ as a modification of $\Phi_2$.
\subsection{Derivation of $\tilde{\Phi}_1$, $\Phi_1, \Phi_2$}\label{subsec:der_tilde_Phi_1_Phi_1_Phi_2}
We begin with a radial map $\tilde{\Phi}_1 : \R^{1 + 2} \to \R$, whose definition we do not yet specify. We write $\tilde{\Phi}_1 = \tilde{\Phi}_1\(z, \xi\) = \tilde{\Phi}_1\(z, \rho\)$, where $\rho = \abs{\xi}$. Next we define $\Phi_1 : \R^{1 + 2} \to \R$ by
\begin{equation}\label{Phi_1_def}\Phi_1 \defined \tilde{\Phi}_1 \circ \(u, r\)\period\end{equation}
After a short calculation we write $\square\Phi_1$ in terms of $\tilde{\Phi}_1$ as
\begin{equation}\label{box_Phi_1}\square\Phi_1 = \(u_t^2 - u_r^2\)\[\partial_{zz}\tilde{\Phi}_1 \circ \(u, r\)\] + \square u\[\partial_z\tilde{\Phi}_1 \circ \(u, r\)\] - \laplacian\tilde{\Phi}_1 \circ \(u, r\) - 2u_r\[\partial_{z\rho}\tilde{\Phi}_1 \circ \(u, r\)\]\period\end{equation}
Now, substituting \eqref{u_pde} and \eqref{NNN_def} into \eqref{box_Phi_1}, we obtain
\begin{equation}\label{box_Phi_1_calc}\square\Phi_1 = \ob\begin{array}{l}\(\partial_{zz}\tilde{\Phi}_1 \circ \(u, r\) - \frac{1}{2}\alpha^2r^{-2}A_1^{-1}\sin2u\[\partial_z\tilde{\Phi}_1 \circ \(u, r\)\]\)\(u_t^2 - u_r^2\) + \\ + \(-2\partial_{z\rho}\tilde{\Phi}_1 \circ \(u, r\) - 2r^{-1}\(1 - A_1^{-1}\)\[\partial_z\tilde{\Phi}_1 \circ \(u, r\)\]\)u_r - \\ - \frac{1}{2}r^{-2}A_1^{-1}\sin2u\[\partial_z\tilde{\Phi}_1 \circ \(u, r\)\] - \laplacian\tilde{\Phi}_1 \circ \(u, r\)\period\end{array}\right.\end{equation}
In order to complete the change of variable from $u$ to $\Phi_1$, we want the coefficients of the $\(u_t^2 - u_r^2\)$ and $u_r$ terms both to be zero. That is, we want to choose $\tilde{\Phi}_1$ to assure that
\begin{equation}\label{Phi_1_tilde_condition_1}\partial_{zz}\tilde{\Phi}_1 \circ \(u, r\) - \frac{1}{2}\alpha^2r^{-2}A_1^{-1}\sin2u\[\partial_z\tilde{\Phi}_1 \circ \(u, r\)\] = 0\end{equation}
and
\begin{equation}\label{Phi_1_tilde_condition_2}-2\partial_{z\rho}\tilde{\Phi}_1 \circ \(u, r\) - 2r^{-1}\(1 - A_1^{-1}\)\[\partial_z\tilde{\Phi}_1 \circ \(u, r\)\] = 0\period\end{equation}
An ODE analysis of \eqref{Phi_1_tilde_condition_1} shows that $\tilde{\Phi}_1$ must be of the form
\begin{equation}\label{Phi_1_tilde_form}\tilde{\Phi}_1 = q\int_\pi^z\(\rho^2 + \alpha^2\sin^2y\)^\frac{1}{2} \, dy\end{equation}
for some radial function $q = q\(\rho\)$. Taking this ansatz for $\tilde{\Phi}_1$ into \eqref{Phi_1_tilde_condition_2} shows that $q$ must be of the form $q\(\rho\) = C\rho^{-1}$ for a constant $C$. Taking $C = 1$, it follows that
\begin{equation}\label{Phi_1_tilde}\tilde{\Phi}_1 = \int_\pi^z\(1 + \alpha^2\rho^{-2}\sin^2y\)^\frac{1}{2} \, dy\period\end{equation}
One can see immediately from \eqref{Phi_1_tilde} that $\partial_z\tilde{\Phi}_1 \circ \(u, r\) = A_1^\frac{1}{2}$. Combining this with \eqref{Phi_1_tilde_condition_1}, \eqref{Phi_1_tilde_condition_2}, and \eqref{box_Phi_1_calc}, we obtain
\begin{equation}\label{box_Phi_1_calc_simplified}\square\Phi_1 = -\frac{1}{2}r^{-2}A_1^{-\frac{1}{2}}\sin 2u - \laplacian\tilde{\Phi}_1 \circ \(u, r\)\period\end{equation}
We want to write more convenient expressions for the two terms on the RHS of \eqref{box_Phi_1_calc_simplified}. For this we first define
\begin{equation}\label{A_2_A_3_def}A_2 = A_2\(y, \rho\) \defined 1 + \alpha^2\rho^{-2}\sin^2y \quad \mbox{and} \quad A_3 = A_3\(y, r\) \defined 1 + \alpha^2r^{-2}\sin^2y\period\end{equation}
Thus we can rewrite \eqref{Phi_1_tilde} as $\tilde{\Phi}_1 = \int_\pi^z\(A_2\)^\frac{1}{2} \, dy$ and therefore
\begin{equation}\label{Phi_1_with_A_3}\Phi_1 = \int_\pi^uA_3^\frac{1}{2} \, dy\period\end{equation}
A direct calculation then shows
\begin{equation}\label{laplacian_Phi_1_tilde_calc}\laplacian\tilde{\Phi}_1 = \rho^{-2}\int_\pi^zA_2^{-\frac{3}{2}}\(A_2^2 - 1\) \, dy\period\end{equation}
Therefore
\begin{equation}\label{laplacian_Phi_1_tilde_simplified}\laplacian\tilde{\Phi}_1 \circ \(u, r\) = r^{-2}\int_\pi^uA_3^{-\frac{3}{2}}\(A_3^2 - 1\) \, dy\period\end{equation}
This gives us a handle on the second term on the RHS of \eqref{box_Phi_1_calc_simplified}. For the first term, we make a direct calculation using the fundamental theorem of calculus to obtain
\begin{equation}\label{first_term_ftoc_simplified_2}-\frac{1}{2}r^{-2}A_1^{-\frac{1}{2}}\sin 2u = -r^{-2}\int_\pi^uA_3^{-\frac{3}{2}}\[1 - \alpha^{-2}r^2\(A_3^2 - 1\)\] \, dy\period\end{equation}
We now insert \eqref{laplacian_Phi_1_tilde_simplified} and \eqref{first_term_ftoc_simplified_2} into \eqref{box_Phi_1_calc_simplified} to obtain
\begin{equation}\label{box_Phi_1_simplified}\square\Phi_1 = \(\alpha^{-2} - r^{-2}\)\Phi_1 - \alpha^{-2}\int_\pi^uA_3^{-\frac{3}{2}} \, dy\period\end{equation}
We still see an $r^{-2}$-type singularity in \eqref{box_Phi_1_simplified}, similar to that in \eqref{u_pde}. To try to resolve it, we shall take an analogous approach. For $u$ we made two transformations: multiplication by $r^{-1}$ and subtraction of the cutoff function $\phi$. We presently do something analogous to the first of these two transformations to $\Phi_1$. To this end we define $\Phi_2 : \R^{1  + 4} \to \R$ by
\begin{equation}\label{Phi_2_def}\Phi_2 \defined r^{-1}\Phi_1\period\end{equation}
From here we lift again to dimension $\R^{1 + 4}$ to write the following wave equation for $\Phi_2$:
\begin{equation}\label{box_Phi_2_substituted}\square\Phi_2 = \alpha^{-2}\(\Phi_2 - r^{-1}\int_\pi^uA_3^{-\frac{3}{2}} \, dy\)\period\end{equation}
We now try to address the $r^{-1}$ factor in the second term of \eqref{box_Phi_2_substituted}. As a first step, we define
\begin{equation}\label{A_4_def}A_4 = A_4\(y, r\) \defined 1 + \alpha^2r^{-2}\sin^2ry\comma\end{equation}
the definition in the limiting case $r = 0$ being the natural one. It is clear that $A_4$ is smooth. Recalling our $\phi_{< 1}$ cutoff function first introduced immediately before \eqref{box_v_with_cutoff_without_u}, we aim to split the second term of \eqref{box_Phi_2_substituted} into two parts: one for small $r$ and one for large $r$. For the small $r$ part, we make the change of variable $y \mapsto ry + \pi$ in the integral, so that we get a factor of $r$ in the differential that cancels the $r^{-1}$ outside the integral. The large $r$ part we leave alone. After doing this and making some simplifications (using the facts that $\phi = \pi$ on the support of $\phi_{< 1}$ and $\sin^2$ has a period of $\pi$), we rewrite \eqref{box_Phi_2_substituted} as
\begin{equation}\label{box_Phi_2_simplified}\square\Phi_2 = \alpha^{-2}\(\Phi_2 - \phi_{< 1}\int_0^vA_4^{-\frac{3}{2}} \, dy - \phi_{> 1}r^{-1}\int_\pi^uA_3^{-\frac{3}{2}} \, dy\)\period\end{equation}
\subsection{Derivation of $\Phi$}\label{subsec:der_Phi}
Unfortunately this is still not satisfactory. By using the definition of $\Phi_2$ along with our simplified expression for $\Phi_1$ and \Cref{sobolev_embedding_lemma,radial_sobolev_embedding_lemma}, we see that the best estimate for $\Phi_2$ that we can obtain so far is, for fixed $t \in I$, $\abs{\Phi_2\(t\)} \lesssim r^{-\frac{1}{2}}$. This is far from enough to put $\Phi_2\(t\)$ in $L^2\(\R^4\)$. In view of this, we make one more transformation whose purpose is to eliminate this problem. We define
\begin{equation}\label{Phi_def}\Phi \defined \Phi_2 + \phi_{> 1}r^{-1}\int_0^\pi A_3^{-\frac{3}{2}} \, dy\period\end{equation}
If we then define
\begin{equation}\label{I_j_def}I_1 \defined \phi_{< 1}r^{-1}\int_\pi^uA_3^\frac{1}{2} \, dy\comma \quad I_2 \defined \phi_{> 1}r^{-1}\int_0^uA_3^\frac{1}{2} \, dy\comma \quad \mbox{and} \quad I_3 \defined -\phi_{> 1}r^{-1}A_3^{-\frac{1}{2}}\int_0^\pi\(A_3 - A_3^{-1}\) \, dy\comma\end{equation}
then a direct calculation shows
\begin{equation}\label{Phi_equals_sum_I_j}\Phi = I_1 + I_2 + I_3\period\end{equation}
We now want to obtain estimates for $I_1$, $I_2$, and $I_3$. For $I_1$, we first define
\begin{equation}\label{A_5_def}A_5 = A_5\(y, r\) \defined 1 + \alpha^2r^{-2}\sin^2\(ry + \phi\)\comma\end{equation}
the definition in the limiting case $r = 0$ being the natural one, and note that $A_5$ is smooth. Then we obtain, after a change of variable $y \mapsto ry + \phi$,
\begin{equation}\label{I_1_estimate}I_1 = \phi_{< 1}\int_0^vA_5^\frac{1}{2} \, dy\period\end{equation}
For $I_2$, the same change of variable and a subsequent rearrangement gives
\begin{equation}\label{I_2_preliminary_estimate}I_2 = \phi_{> 1}\int_0^vA_5^\frac{1}{2} \, dy + \phi_{> 1}r^{-1}\int_0^\phi A_3^\frac{1}{2} \, dy\period\end{equation}
In observing the second term of \eqref{I_2_preliminary_estimate}, one notes that the $\phi_{> 1}$ factor implies that the term is supported only for $r \ge \frac{1}{2}$, but the upper bound on the integral of $\phi$ implies that the term is also supported only for $r \le 2$. This implies the $r^{-1}$ term in front is harmless as well, and we may rewrite this second term simply as $\phi_{\sim 1}$, a nonnegative, smooth cutoff function localized to $\frac{1}{2} \le r \le 2$. Therefore we rewrite \eqref{I_2_preliminary_estimate} as
\begin{equation}\label{I_2_estimate}I_2 = \phi_{> 1}\int_0^vA_5^\frac{1}{2} \, dy + \phi_{\sim 1}\period\end{equation}
Now, for $I_3$, we first compute directly that $A_3 - A_3^{-1} \lesssim r^{-2}$ for $r \ge \frac{1}{2}$. Furthermore, $A_3^{-\frac{1}{2}} \le 1$. Thus, also using the fact that $A_3 - A_3^{-1} \ge 0$, we can rewrite $I_3$ in terms of $\phi_{\ge \frac{1}{2}}$, a nonnegative, smooth, bounded cutoff function localized to $r \ge \frac{1}{2}$, as
\begin{equation}\label{I_3_estimate}I_3 = \phi_{\ge \frac{1}{2}}r^{-3}\period\end{equation}
Putting together \eqref{Phi_equals_sum_I_j}, \eqref{I_1_estimate}, \eqref{I_2_estimate}, and \eqref{I_3_estimate}, absorbing $\phi_{\sim 1}$ into $\phi_{\ge \frac{1}{2}}$, and simplifying, we have shown
\begin{equation}\label{Phi_H0}\Phi = \int_0^vA_5^\frac{1}{2} \, dy + \phi_{\ge \frac{1}{2}}r^{-3}\period\end{equation}
This equation \eqref{Phi_H0} is a direct expression for $\Phi$. However, we want to relate $\Phi$ to $\Phi_2$ in a manner such that we can use our $\Phi_2$ analysis to give us information about $\square\Phi$. Thus, going back to the definition \eqref{Phi_def} of $\Phi$, we see that we can write instantly
\begin{equation}\label{Phi_estimate}\Phi = \Phi_2 + \psi_{\ge \frac{1}{2}}r^{-1}\comma\end{equation}
where $\psi_{\ge \frac{1}{2}}$ is a nonnegative, smooth, bounded cutoff function localized to $r \ge \frac{1}{2}$. A lengthy but direct calculation shows that $\square\(\psi_{\ge \frac{1}{2}}r^{-1}\)$ can be written as the product of a smooth, bounded function supported on $r \ge \frac{1}{2}$ and $r^{-3}$. For this reason, we shall write $\square\(\psi_{\ge \frac{1}{2}}r^{-1}\) = \phi_{\ge \frac{1}{2}}r^{-3}$. Thus, taking $\square$ of both sides of \eqref{Phi_estimate}, appealing to \eqref{box_Phi_2_simplified}, and rearranging a bit, we obtain
\begin{equation}\label{box_Phi_prelim}\square\Phi = \alpha^{-2}\(\Phi - \phi_{> 1}r^{-1}\int_0^uA_3^{-\frac{3}{2}} \, dy - \phi_{< 1}\int_0^vA_4^{-\frac{3}{2}} \, dy\) + \phi_{\ge \frac{1}{2}}r^{-3}\period\end{equation}
Simple computations show
\begin{equation}\label{A_j_computations}\phi_{< 1}\int_0^vA_4^{-\frac{3}{2}} \, dy = \phi_{< 1}\int_0^vA_5^{-\frac{3}{2}} \, dy \quad \mbox{and} \quad \phi_{> 1}\int_0^uA_3^{-\frac{3}{2}} \, dy = \phi_{> 1}r\int_{-\frac{\phi}{r}}^0A_5^{-\frac{3}{2}} \, dy + \phi_{> 1}r\int_0^vA_5^{-\frac{3}{2}} \, dy\period\end{equation}
The first term on the RHS of the second member of \eqref{A_j_computations} is smooth, bounded, and supported only when $\frac{1}{2} \le r \le 2$, so it can be absorbed into the last term on the RHS of \eqref{box_Phi_prelim}. If we do this, substitute \eqref{A_j_computations} into \eqref{box_Phi_prelim}, and simplify, we obtain
\begin{equation}\label{Phi_wave_equation}\square\Phi = \alpha^{-2}\(\Phi - \int_0^vA_5^{-\frac{3}{2}} \, dy\) + \phi_{\ge \frac{1}{2}}r^{-3}\period\end{equation}
We shall analyze \eqref{Phi_wave_equation} and \eqref{Phi_H0} in the next section.
\begin{remark}\label{phi_ge_1_2_properties}This function $\phi_{\ge \frac{1}{2}}r^{-3}$ has additional properties worth noting. If we look at its definition, and observe that, for $r \ge 1$ and $j\in \Z_{\ge 0}$,
\begin{equation}\label{phi_ge_1_2_properties_estimates}\partial_r^j\(r^{-1}\) \lesssim r^{-\(j + 1\)}\comma \quad \partial_r^j\(\phi_{> 1}\) \lesssim \ob\begin{array}{ll}1 & j = 0\comma \\ 0 & j > 0\comma\end{array}\right. \quad \mbox{and} \quad \partial_r^j\(\int_0^\pi\(A_3 - A_3^{-1}\) \, dy\) \lesssim \ob\begin{array}{ll}r^{-2} & j = 0\comma \\ r^{-3} & j > 0\comma\end{array}\right.\end{equation}
we see immediately that not only do we have the estimate $\abs{\phi_{\ge \frac{1}{2}}r^{-3}} \lesssim \chi_\infty r^{-3}$, but we have the same estimate for all of its derivatives as well. Thus, from here forth, we shall make use freely of the estimates
\begin{equation}\label{phi_ge_1_2_estimate}\abs{\gradient^j\(\phi_{\ge \frac{1}{2}}r^{-3}\)}, \abs{\partial_r^j\(\phi_{\ge \frac{1}{2}}r^{-3}\)} \lesssim \chi_\infty r^{-3}\comma \quad j \in \Z_{\ge 0}\period\end{equation}\end{remark}
\begin{remark}[Simplified notation]Before we begin the $Y_{1, I, 4}$ analysis of $\Phi$, we introduce some streamlined notation that will be used for the remainder of this paper. From here forth, we shall use
\begin{alignat}{3}
\label{Y_s_notation}& Y_s \quad & \quad \mbox{as a shorthand notation for} \quad & \quad Y_{s, I, 4}\comma \\
\label{t_function_norm_notation}& \norm{f}_{L^p} \quad & \quad \mbox{as a shorthand notation for} \quad & \quad \norm{f}_{L^p\(I\)} \quad \mbox{(if $f$ is a pure function of $t$)}\comma \\
\label{x_function_norm_notation}& \norm{f}_{L^q} \quad & \quad \mbox{as a shorthand notation for} \quad & \quad \norm{f}_{L^q\(\R^4\)} \quad \mbox{(if $f$ is a pure function of $x$)}\comma \\
\label{t_x_x_function_norm_notation}& \norm{f}_{L_x^q} \quad & \quad \mbox{as a shorthand notation for} \quad & \quad \norm{f}_{L_x^q\(\R^4\)} \quad \mbox{(if $f$ is a function of $t$ and $x$)}\comma \\
\label{t_x_function_norm_notation}& \norm{f}_{L^pL^q} \quad & \quad \mbox{as a shorthand notation for} \quad & \quad \norm{f}_{L_t^p\(I, L_x^q\(\R^4\)\)} \quad \mbox{(if $f$ is a function of $t$ and $x$)}\comma \\
\label{y_x_function_norm_notation}& \norm{f}_{L^pL^q} \quad & \quad \mbox{as a shorthand notation for} \quad & \quad \norm{f}_{L_y^p\(I, L_x^q\(\R^4\)\)} \quad \mbox{(if $f$ is a function of $y$ and $x$)}\period
\end{alignat}
\end{remark}
\section{$Y_1$ Analysis of $\Phi$}\label{sec:Y_1}
Let us define $s$, $v_0$, and $v_1$, as in the statement of \Cref{v_continuation_criterion_theorem} now and for the rest of this paper. We ultimately are interested in showing that the continuation criterion from \Cref{v_continuation_criterion_theorem},
\begin{equation}\label{continuation_criterion_goal}\jbracket{r}v, \jbracket{r}v_t, \jbracket{r}\gradient v \in L^\infty\(S_{I, 4}\)\comma\end{equation}
is satisfied. Eventually we achieve this by placing $\Phi \in Y_4$. A first step toward this goal is to place $\Phi \in Y_1$. In order to do this, we must control the $Y_0$ norms of $\Phi$, $\Phi_t$, and $\gradient\Phi$. Before doing this, let us define the energy operator $E$ associated with solutions $u$ of \eqref{u_pde}. This energy is denoted by $Eu : I \to \R$ (or $Eu\(t\)$) and defined by
\begin{equation}\label{energy_def}Eu \defined \frac{1}{2}\int_0^\infty\[A_1\(u_t^2 + u_r^2\) + r^{-2}\sin^2u\]r \, dr\period\end{equation}
\begin{lemma}\label{conservation_of_energy}If $u$ satisfies the hypotheses of \Cref{u_local_existence_theorem}, then $Eu$ is constant.\end{lemma}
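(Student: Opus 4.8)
The plan is to show that the energy functional $Eu(t)$ defined in \eqref{energy_def} has vanishing time derivative along solutions of \eqref{u_pde}. This is a standard energy-conservation computation: one differentiates $Eu$ in $t$, integrates by parts in $r$, and uses the PDE to cancel the resulting bulk terms, being careful that the boundary contributions at $r = 0$ and $r = \infty$ vanish. The regularity hypothesis $s \ge 4$ in \Cref{u_local_existence_theorem}, together with the Sobolev and radial Sobolev embedding lemmata (\Cref{sobolev_embedding_lemma,radial_sobolev_embedding_lemma}) and \Cref{pointwise_lemma}, is more than enough to justify all the differentiations under the integral sign and the integrations by parts, and to guarantee the decay needed to kill the boundary terms.

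First I would differentiate under the integral: writing the integrand as $\tfrac12[A_1(u_t^2 + u_r^2) + r^{-2}\sin^2 u]\,r$ and recalling $A_1 = 1 + \alpha^2 r^{-2}\sin^2 u$, one gets
\begin{equation}\label{dEu_dt}\frac{d}{dt}Eu = \int_0^\infty\[A_1(u_t u_{tt} + u_r u_{rt}) + \tfrac12 (\partial_t A_1)(u_t^2 + u_r^2) + r^{-2}\sin u\cos u\, u_t\]r\,dr\period\end{equation}
Next I would integrate the $A_1 u_r u_{rt}\,r$ term by parts in $r$, moving the $\partial_r$ off of $u_{rt}$; this produces $-\int_0^\infty \partial_r(r A_1 u_r)\,u_t\,dr$ plus a boundary term $[r A_1 u_r u_t]_0^\infty$. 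The boundary term vanishes: at $r = 0$ because of the factor $r$ and the boundary condition $u(t,0) = \pi$ (so $u_t(t,0) = 0$, and $u_r$ stays bounded), and at $r = \infty$ because the $H^s$ regularity forces sufficient decay of $u_r$ and $u_t$ via \Cref{radial_sobolev_embedding_lemma} applied to the $\R^4$ lift, as exploited already in the proof of \Cref{u_pde_vs_v_pde}. Expanding $\partial_r(r A_1 u_r) = r A_1 (u_{rr} + r^{-1}u_r) + r(\partial_r A_1)u_r$ and substituting into \eqref{dEu_dt}, the terms combine to
\begin{equation}\label{dEu_collected}\frac{d}{dt}Eu = \int_0^\infty u_t\[r A_1(u_{tt} - u_{rr} - r^{-1}u_r) - r(\partial_r A_1)u_r + \tfrac{r}{2u_t}(\partial_t A_1)(u_t^2 + u_r^2) + r^{-1}\sin u\cos u\]dr\period\end{equation}
Then I would invoke \eqref{u_intro_pde} (equivalently the $\square u = \NNN(u)$ form in \eqref{u_pde}) — which says precisely that $A_1(u_{tt} - u_{rr} - r^{-1}u_r)$ equals $-(2r^{-1}\alpha^2 u_r \sin^2 u + r^{-2}[\alpha^2(u_t^2 - u_r^2) + 1]\sin u\cos u)$ — to substitute for the first term. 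A direct computation of $\partial_r A_1 = -2\alpha^2 r^{-3}\sin^2 u + \alpha^2 r^{-2}\sin 2u\, u_r$ and $\partial_t A_1 = \alpha^2 r^{-2}\sin 2u\, u_t$ shows that every remaining term in the bracket of \eqref{dEu_collected} cancels against a matching term, leaving $\frac{d}{dt}Eu \equiv 0$.

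The main obstacle is bookkeeping rather than conceptual: one must organize the several $r^{-k}\sin^2 u$ and $r^{-k}\sin 2u$ contributions — arising from $\partial_t A_1$, $\partial_r A_1$, the $\NNN(u)$ nonlinearity, and the explicit $r^{-2}\sin^2 u$ term in the energy density — so that they visibly cancel in pairs, and one must handle the apparent $1/u_t$ factor in \eqref{dEu_collected} by keeping the $(\partial_t A_1)(u_t^2 + u_r^2)$ term written as $\alpha^2 r^{-2}\sin 2u\, u_t(u_t^2 + u_r^2)$ (never actually dividing). I would also remark that the endpoint analysis at $r = 0$ deserves a sentence: near the origin the natural-limit definitions of $A_1$ and the boundary condition $u(t,0)=\pi$ make $\sin u$ vanish to first order, so $r^{-2}\sin^2 u$ and all the singular-looking coefficients are in fact bounded, and the factor $r$ in the measure suppresses the boundary term. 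Once $\frac{d}{dt}Eu = 0$ on $[0,T)$ for every $T$, continuity of $Eu$ in $t$ (which follows from $u \in X_{s,\overline{I},2}$) gives that $Eu$ is constant, completing the proof.
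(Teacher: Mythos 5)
Your proposal is correct and follows the same route as the paper's (much terser) proof: differentiate under the integral, integrate by parts in $r$, and recognize $\square u - \NNN\(u\)$ in the integrand, with finiteness of $Eu\(0\)$ coming from the initial data. One caution: the coefficient $2r^{-1}\alpha^2u_r\sin^2u$ you quote from \eqref{u_intro_pde} should be $2r^{-3}\alpha^2u_r\sin^2u$ (as the form \eqref{NNN_def} correctly encodes, since $1 - A_1^{-1} = \alpha^2r^{-2}A_1^{-1}\sin^2u$); with the literal $r^{-1}$ power the $\sin^2u$ contributions would not cancel against the $\partial_rA_1$ term, so your final cancellation claim is valid only for the $\square u = \NNN\(u\)$ version of the equation.
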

\begin{proof}There are several proofs of this important fact. A particularly transparent one is to show $\(Eu\)^\prime = 0$ pointwise. To do this, one fixes $t \in I$ and picks $\tilde{T} \in \[t, T\)$ for which a local solution $u$ exists (for example, $\tilde{T} = \frac{1}{2}\(t + T\)$ works). When one takes the derivative of \eqref{energy_def} and subsequently integrates by parts, one sees the difference, $\square u - \NNN\(u\)$, of the two sides of the equation \eqref{u_local_existence_pde} under the integral sign, meaning $\(Eu\)^\prime\(t\) = 0$. Thus, one is only left to check, for example, that $Eu\(0\)$ is finite. But this comes immediately from the initial conditions in \eqref{u_local_existence_pde}.\end{proof}
\begin{prop}\label{Phi_t_in_Y_0}$\Phi_t \in Y_0$.\end{prop}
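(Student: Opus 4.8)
The plan is to compute $\Phi_t$ in closed form and then bound its $L_x^2\(\R^4\)$ norm, uniformly in $t$, by the conserved two-dimensional energy $Eu$ from \eqref{energy_def}. First I would observe that, by \eqref{Phi_def}, the difference $\Phi - \Phi_2$ equals $\phi_{> 1}r^{-1}\int_0^\pi A_3^{-\frac{3}{2}} \, dy$, a pure function of $r$; hence $\Phi_t = \partial_t\Phi_2$. Combining $\Phi_2 = r^{-1}\Phi_1$ from \eqref{Phi_2_def}, the formula $\Phi_1 = \int_\pi^uA_3^\frac{1}{2} \, dy$ from \eqref{Phi_1_with_A_3}, and the identity $A_3 \circ \(u, r\) = 1 + \alpha^2r^{-2}\sin^2u = A_1$ (from \eqref{A_1_def} and \eqref{A_2_A_3_def}), the fundamental theorem of calculus gives $\partial_t\Phi_1 = A_1^\frac{1}{2}u_t$, so that
\begin{equation}\Phi_t = r^{-1}A_1^\frac{1}{2}u_t = A_1^\frac{1}{2}v_t\comma\end{equation}
the last equality using $v_t = r^{-1}u_t$, valid because $v = r^{-1}\(u - \phi\)$ with $\phi$ time-independent.

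Next I would pass to the radial variable. Since $\Phi_t$ is radial, for each fixed $t \in I$,
\begin{equation}\norm{\Phi_t\(t\)}_{L_x^2\(\R^4\)}^2 \simeq \int_0^\infty A_1v_t^2r^3 \, dr = \int_0^\infty A_1u_t^2r \, dr \le 2 \, Eu\(t\)\comma\end{equation}
where the first relation is the formula for the $L^2\(\R^4\)$ norm of a radial function, the equality is the weight-matching $v_t^2r^3 = \(r^{-1}u_t\)^2r^3 = u_t^2r$, and the inequality is immediate from \eqref{energy_def} and the nonnegativity of $A_1u_r^2$ and $r^{-2}\sin^2u$. By \Cref{conservation_of_energy}, $Eu\(t\) = Eu\(0\)$ for all $t \in I$, and $Eu\(0\) < \infty$ under the standing regularity hypothesis \eqref{main_theorem_regularity_hypothesis} (this is precisely the finiteness verified at the end of the proof of \Cref{conservation_of_energy}). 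Taking the supremum over $t \in I$ gives $\norm{\Phi_t}_{Y_0} = \norm{\Phi_t}_{L_t^\infty\(I, L_x^2\(\R^4\)\)} \lesssim Eu\(0\)^\frac{1}{2} < \infty$, which is the assertion.

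I do not anticipate a genuine obstacle; the argument is a short computation resting on conservation of energy. The one point that deserves attention is the bookkeeping $A_3 \circ \(u, r\) = A_1$ together with $v_t^2r^3 = u_t^2r$ — this weight-matching is exactly why the two-dimensional equation was lifted to $\R^4$, so that the $L^2\(\R^4\)$ mass of the lifted derivatives reproduces the conserved energy density verbatim. One should also confirm that differentiating under the integral sign to reach $\partial_t\Phi_1 = A_1^\frac{1}{2}u_t$ is legitimate, which it is since $u \in X_{s, \clos{I}, 2} \subseteq C_t^1$ (recall $s \ge 4$) and $A_3^\frac{1}{2}$ is smooth; membership in $Y_0 = L_t^\infty\(I, L_{x, \mathrm{rad}}^2\(\R^4\)\)$ then follows from the essential boundedness just established, $\Phi_t$ being radial and measurable by construction.
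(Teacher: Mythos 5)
Your proposal is correct and follows essentially the same route as the paper: both reduce to the identity $\Phi_t = r^{-1}A_1^{\frac{1}{2}}u_t$ (the paper obtains it by differentiating \eqref{Phi_H0}, you by differentiating the definition via $\Phi_2$ and $\Phi_1$, which is the same computation) and then bound $\norm{\Phi_t(t)}_{L^2(\R^4)}^2 \simeq \int_0^\infty A_1u_t^2r\,dr \le 2Eu(0)$ using \Cref{conservation_of_energy}. The weight-matching observation $v_t^2r^3 = u_t^2r$ is exactly the step the paper performs implicitly in \eqref{Phi_t_L_x^2_estimate}.
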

\begin{proof}First observe from \eqref{Phi_H0} that
\begin{equation}\label{Phi_t_calc}\Phi_t = r^{-1}A_1^\frac{1}{2}u_t\period\end{equation}
Applying \eqref{Phi_t_calc}, \eqref{energy_def}, and Lemma \ref{conservation_of_energy}, we obtain
\begin{equation}\label{Phi_t_L_x^2_estimate}\norm{\Phi_t}_{Y_0}^2 = \norm{\int_{\R^4}r^{-2}A_1u_t^2 \, dx}_{L^\infty} \simeq \norm{\int_0^\infty A_1u_t^2r \, dr}_{L^\infty} \le 2\norm{Eu}_{L^\infty} = 2Eu\(0\) < \infty\period\end{equation}
\end{proof}
\begin{lemma}\label{Phi_L^2_estimate_lemma}For each $t \in I$, $\norm{\Phi\(t\)}_{L^2} \lesssim t + 1$.\end{lemma}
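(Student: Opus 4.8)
The plan is to combine the uniform bound on $\Phi_t$ just obtained in \Cref{Phi_t_in_Y_0} with the fundamental theorem of calculus in the time variable. From the estimate \eqref{Phi_t_L_x^2_estimate} we have $\norm{\Phi_t}_{Y_0}^2 \le 2Eu\(0\)$, and since the initial data is fixed throughout the paper this says $\norm{\Phi_t\(\tau\)}_{L^2} \lesssim 1$ for every $\tau \in I$. Writing $\Phi\(t\) = \Phi\(0\) + \int_0^t \Phi_t\(\tau\) \, d\tau$ as an identity in $L^2\(\R^4\)$ and applying Minkowski's integral inequality yields $\norm{\Phi\(t\)}_{L^2} \le \norm{\Phi\(0\)}_{L^2} + \int_0^t \norm{\Phi_t\(\tau\)}_{L^2} \, d\tau \lesssim \norm{\Phi\(0\)}_{L^2} + t$. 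Thus the entire statement reduces to the claim that $\norm{\Phi\(0\)}_{L^2}$ is finite, bounded by a constant depending only on the fixed data.

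For that claim I would use the explicit formula \eqref{Phi_H0}, which at $t = 0$ reads $\Phi\(0\) = \int_0^{v_0} A_5^\frac{1}{2} \, dy + \phi_{\ge \frac{1}{2}}r^{-3}$, where $v_0 = v\(0\) \in H^s_{\mathrm{rad}}\(\R^4\)$ with $s \ge 4$. The term $\phi_{\ge \frac{1}{2}}r^{-3}$ is supported on $r \ge \frac{1}{2}$ and decays like $r^{-3}$, so $\int_{1/2}^\infty r^{-6} \cdot r^3 \, dr < \infty$ shows it lies in $L^2\(\R^4\)$. For the integral term, note that $A_5 = 1 + \alpha^2 r^{-2}\sin^2\(ry + \phi\)$ satisfies $A_5 \le 1 + \alpha^2 y^2$ on $r \le 1$ (where $\phi = \pi$, using $\sin^2\(ry\) \le \(ry\)^2$) and $A_5 \le 1 + \alpha^2$ on $r \ge 1$; in either case $\abs{\int_0^{v_0} A_5^\frac{1}{2} \, dy} \lesssim \abs{v_0} + v_0^2$ pointwise. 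Since $s \ge 4 > 2$, \Cref{sobolev_embedding_lemma} gives $v_0 \in L^\infty\(\R^4\)$, so in fact $\abs{\int_0^{v_0} A_5^\frac{1}{2} \, dy} \lesssim \abs{v_0}$ with an implied constant depending only on the fixed data; since $v_0 \in H^s\(\R^4\) \subseteq L^2\(\R^4\)$, this completes the bound on $\norm{\Phi\(0\)}_{L^2}$, and hence the proof.

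The one point deserving a little care is the justification of $\Phi\(t\) = \Phi\(0\) + \int_0^t \Phi_t \, d\tau$ as an identity in $L^2\(\R^4\)$: this does not follow from $\Phi_t \in Y_0$ alone, but it does follow once one recalls that $\Phi$ is built from $v \in X_{s, \clos{I}, 4}$ through the smooth expression \eqref{Phi_H0} (with $\partial_t\Phi$ given by \eqref{Phi_t_calc}), so that $\Phi \in C_t\(I, L^2\(\R^4\)\)$; the finiteness of $I$ then gives $\Phi_t \in L_t^\infty L_x^2 \subseteq L_t^1 L_x^2$, which makes the fundamental theorem of calculus applicable. Beyond this bookkeeping I do not anticipate any genuine obstacle; the argument is a short combination of conservation of energy, the Sobolev embedding lemma, and the explicit structure of $A_5$.
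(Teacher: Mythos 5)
Your proof is correct and follows essentially the same route as the paper: bound $\norm{\Phi\(0\)}_{L^2}$ from the explicit formula \eqref{Phi_H0} (the paper uses the same pointwise bound $\abs{\Phi} \lesssim \abs{v} + v^2 + \abs{\phi_{\ge \frac{1}{2}}}r^{-3}$), then propagate in time using the uniform bound on $\norm{\Phi_t\(\tau\)}_{L^2}$ supplied by \Cref{Phi_t_in_Y_0}. The only difference is in the propagation step: the paper applies the mean value theorem pointwise in $r$, producing an intermediate time $t_r$ that depends on $r$, whereas you use the fundamental theorem of calculus plus Minkowski's integral inequality; your version is the cleaner of the two, since it avoids having to interpret $\norm{\Phi_t\(t_r\)}_{L^2}$ with $t_r$ varying over $r$.
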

\begin{proof}We begin with a basic estimate for $\Phi$. From \eqref{Phi_H0} we have
\begin{equation}\label{Phi_improved_estimate}\abs{\Phi} \lesssim \abs{v} + v^2 + \abs{\phi_{\ge \frac{1}{2}}}r^{-3}\period\end{equation}
Specializing now to the case $t = 0$ and appealing to the Sobolev embedding lemma and the regularity of the initial data, \eqref{Phi_improved_estimate} implies
\begin{equation}\label{Phi_0_L^2_estimate}\norm{\Phi\(0\)}_{L^2} \lesssim \norm{v\(0\)}_{L^2} +  \norm{v^2\(0\)}_{L^2} + \norm{\phi_{\ge \frac{1}{2}}r^{-3}}_{L^2} \lesssim 1\period\end{equation}
We shall use \eqref{Phi_0_L^2_estimate} in conjunction with the mean value theorem to finish the proof of this lemma. Let $t \in I$ be given. By the mean value theorem, for each $r > 0$, there is a $t_r \in \[0, t\]$ such that
\begin{equation}\label{Phi_MVT_estimate_rearranged}\Phi\(t, r\) = t\Phi_t\(t_r, r\) - \Phi\(0, r\)\period\end{equation}
It follows from \eqref{Phi_MVT_estimate_rearranged} and \eqref{Phi_0_L^2_estimate} that
\begin{equation}\label{Phi_MVT_L^2_estimate}\norm{\Phi\(t\)}_{L^2} \lesssim t\norm{\Phi_t\(t_r\)}_{L^2} + 1\period\end{equation}
Applying \Cref{Phi_t_in_Y_0} to \eqref{Phi_MVT_L^2_estimate} gives
\begin{equation}\label{Phi_L^2_estimate}\norm{\Phi\(t\)}_{L^2} \lesssim t + 1\period\end{equation}
\end{proof}
\begin{cor}\label{Phi_in_Y_0}$\Phi \in Y_0$.\end{cor}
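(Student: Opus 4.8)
The plan is to read the conclusion off directly from \Cref{Phi_L^2_estimate_lemma} together with the standing assumption, fixed in \Cref{u_gwp_remark}, that $T < \infty$. Unwinding \Cref{function_spaces_defs} in the case $s = 0$, membership $\Phi \in Y_0 = Y_{0, I, 4}$ amounts to exactly two things: that $\Phi(t, \cdot)$ is radial for each $t \in I$, and that $\sup_{t \in I}\norm{\Phi(t)}_{L^2} < \infty$. The first is immediate from the construction: by \eqref{Phi_H0}, $\Phi$ is assembled from $v$ (which is radial) and the radial cutoff $\phi_{\ge \frac{1}{2}}$ composed with smooth functions, hence $\Phi$ is radial, and moreover $\Phi(t) \in L_x^2(\R^4)$ for each $t$ follows from \Cref{Phi_L^2_estimate_lemma} itself.

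For the second point I would simply invoke \Cref{Phi_L^2_estimate_lemma}, which yields $\norm{\Phi(t)}_{L^2} \lesssim t + 1$ for every $t \in I$. Since $I = [0, T)$ with $T < \infty$, this gives $\sup_{t \in I}\norm{\Phi(t)}_{L^2} \lesssim T + 1 < \infty$, which is precisely what is required. There is no genuine obstacle here; the only point deserving a word of care is that the implied constant in \Cref{Phi_L^2_estimate_lemma} is independent of $t$ — which it is, depending only on the fixed data, as the proof of that lemma shows — so that the supremum over the finite interval $I$ is indeed finite. This completes the argument.
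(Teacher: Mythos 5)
Your argument is correct and is exactly the paper's own proof: the corollary follows immediately from \Cref{Phi_L^2_estimate_lemma} combined with the standing assumption $T < \infty$ from \Cref{u_gwp_remark}. Your additional remarks on radiality and the $t$-independence of the implied constant are fine but not needed beyond what the lemma already provides.
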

\begin{proof}It follows immediately from Lemma \ref{Phi_L^2_estimate_lemma} and the hypothesis that $T < \infty$.\end{proof}
\begin{prop}\label{Phi_in_Y_1}$\Phi \in Y_1$.\end{prop}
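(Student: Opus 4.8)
The only new ingredient needed is $\gradient\Phi\in Y_0$: \Cref{Phi_in_Y_0} and \Cref{Phi_t_in_Y_0} already give $\Phi,\Phi_t\in Y_0$, so $\Phi\in Y_1$ will follow once $\norm{\gradient\Phi}_{Y_0}<\infty$. The plan is to apply the Strichartz estimate \Cref{strichartz_estimates_specific} to the wave equation \eqref{Phi_wave_equation}, whose source is $S\(\Phi\)=\alpha^{-2}\(\Phi-\int_0^vA_5^{-\frac{3}{2}}\,dy\)+\phi_{\ge\frac{1}{2}}r^{-3}$. Only the energy part $\norm{\gradient\Phi}_{Y_0}+\norm{\Phi_t}_{Y_0}$ of the left-hand side of that estimate is needed here (a plain energy estimate for the wave operator $\square$ would do just as well), so the task reduces to bounding the three quantities $\norm{\gradient\Phi\(0\)}_{L^2}$, $\norm{\Phi_t\(0\)}_{L^2}$, and $\norm{S\(\Phi\)}_{L^1L^2}$.

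For the data I would use the explicit formulas \eqref{Phi_H0} and \eqref{Phi_t_calc}. Differentiating \eqref{Phi_H0} in space and using radiality gives $\abs{\gradient\Phi}\lesssim A_5^\frac{1}{2}\abs{\gradient v}+\abs{\int_0^v\partial_r\[A_5^\frac{1}{2}\]\,dy}+\abs{\gradient\(\phi_{\ge\frac{1}{2}}r^{-3}\)}$. At $t=0$: the first term is controlled by $\norm{\gradient v_0}_{L^2}$ together with $\norm{v_0}_{L^\infty}$, since $A_5^\frac{1}{2}\lesssim1+\abs{v}$; the second term is bounded by a polynomial in $\abs{v_0}$ once one knows that $\partial_rA_5^\frac{1}{2}$ is, uniformly in $r$, dominated by a polynomial in its first argument, which holds because $A_5=1+\alpha^2r^{-2}\sin^2\(ry+\phi\)$ and all its $r$-derivatives are smooth and bounded on the relevant range (the apparent $r^{-2}$, $r^{-3}$ singularities cancel: near $r=0$ one has $\phi=\pi$ and $r^{-2}\sin^2\(ry+\pi\)$ is a smooth function of $ry$); and the third term lies in $L^2\(\R^4\)$ by \Cref{phi_ge_1_2_properties}. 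With $s\ge4$ and \Cref{sobolev_embedding_lemma} this places $\gradient\Phi\(0\)\in L^2$. The remaining data term $\norm{\Phi_t\(0\)}_{L^2}$ is finite by exactly the computation in the proof of \Cref{Phi_t_in_Y_0}, where it is bounded by $\sqrt{2Eu\(0\)}$.

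It remains to place $S\(\Phi\)$ in $L^1L^2$, which I would do summand by summand, using that $I$ is a finite interval. First, $\norm{\Phi}_{L^1L^2}\le\abs{I}\,\norm{\Phi}_{L^\infty L^2}<\infty$ by \Cref{Phi_in_Y_0} (indeed $\lesssim T+T^2$ via \Cref{Phi_L^2_estimate_lemma}). Second, $A_5\ge1$ gives the pointwise bound $A_5^{-\frac{3}{2}}\le1$, hence $\abs{\int_0^vA_5^{-\frac{3}{2}}\,dy}\le\abs{v}$ and $\norm{\int_0^vA_5^{-\frac{3}{2}}\,dy}_{L^1L^2}\le\abs{I}\,\norm{v}_{L^\infty L^2}<\infty$, the last norm being finite because the local solution $v$ lies in $X_{s, \clos{I}, 4}\subseteq C_t\(\clos{I}, L_x^2\(\R^4\)\)$. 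Third, $\phi_{\ge\frac{1}{2}}r^{-3}$ is time-independent, supported in $r\ge\frac{1}{2}$, and decays like $r^{-3}$, so it belongs to $L^2\(\R^4\)$ and its $L^1L^2$-norm equals $\abs{I}\,\norm{\phi_{\ge\frac{1}{2}}r^{-3}}_{L^2}<\infty$. Inserting these three bounds into the Strichartz inequality yields $\gradient\Phi\in Y_0$, and combined with $\Phi,\Phi_t\in Y_0$ this gives $\Phi\in Y_1$.

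I do not expect a genuine obstacle here: every estimate reduces either to a previously established result or to the finiteness of $I$. The one step that calls for real care is the data term $\norm{\gradient\Phi\(0\)}_{L^2}$, where one must check that differentiating $A_5^\frac{1}{2}$ in $r$ does not reintroduce the $r^{-2}$-type singularity whose removal was the entire purpose of passing from $\Phi_2$ to $\Phi$; this is exactly the place where the smoothness and uniform boundedness of $A_5$ and all its derivatives, noted when $A_5$ was defined, is used essentially.
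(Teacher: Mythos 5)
Your argument is, in substance, the paper's argument: the Strichartz estimate of \Cref{strichartz_estimates_specific} with $\(\tilde{p}, \tilde{q}\) = \(1, 2\)$ and $\gamma = 1$ is exactly the energy inequality, and the paper assembles the same three ingredients --- the initial energy via \eqref{Phi_H0} and \eqref{Phi_t_calc} (including the same Maclaurin analysis of $\gradient A_5$ to handle the apparent $r^{-2}$ singularity), and a term-by-term $L^1_tL^2_x$ bound on the source --- the only cosmetic difference being that the paper derives the energy identity by hand, multiplying \eqref{Phi_wave_equation} by $\Phi_t$ and integrating, rather than citing the Strichartz lemma. One step of yours, however, deserves correction even though it does prove the literal statement: you bound $\norm{\int_0^v A_5^{-\frac{3}{2}} \, dy}_{L^1L^2}$ by $\abs{I}\norm{v}_{L^\infty L^2}$ and then declare the latter finite because $v \in X_{s, \clos{I}, 4}$. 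That norm of $v$ is precisely the quantity the entire continuation argument is built to avoid: it is finite for each $T < T^*$ but is not controlled uniformly as $T \to T^*$ (if it were, there would be nothing to prove), so a constant depending on it cannot be fed into the subsequent $Y_2$--$Y_4$ bootstrap or into the final verification of the continuation criterion. The paper instead closes this term with $A_5^{-\frac{3}{2}} \le A_5^{\frac{1}{2}}$ and \eqref{Phi_H0}, which give $\abs{\int_0^v A_5^{-\frac{3}{2}} \, dy} \le \abs{\int_0^v A_5^{\frac{1}{2}} \, dy} \le \abs{\Phi} + \phi_{\ge \frac{1}{2}}r^{-3}$, so the bound follows from \Cref{Phi_in_Y_0} with a constant depending only on $T$ and the data. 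Your own pointwise bound $\abs{\int_0^v A_5^{-\frac{3}{2}} \, dy} \le \abs{v}$ can be salvaged identically, since $A_5 \ge 1$ and \eqref{Phi_H0} also yield $\abs{v} \le \abs{\Phi} + \phi_{\ge \frac{1}{2}}r^{-3}$; the point is simply never to route an estimate through the Sobolev norms of the local solution itself.
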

\begin{proof}By dint of \Cref{Phi_t_in_Y_0,Phi_in_Y_0}, it suffices to show $\gradient\Phi \in Y_0$. For this, we start by multiplying \eqref{Phi_wave_equation} by $\Phi_t$ to obtain
\begin{equation}\label{box_4_Phi_Phi_t_calc}\square\Phi\Phi_t = \Phi_t\[\alpha^{-2}\(\Phi - \int_0^vA_5^{-\frac{3}{2}} \, dy\) + \phi_{\ge \frac{1}{2}}r^{-3}\]\period\end{equation}
Next we integrate both sides of \eqref{box_4_Phi_Phi_t_calc} over $\R^4$:
\begin{equation}\label{int_box_4_Phi_Phi_t_calc}\int_{\R^4}\square\Phi\Phi_t \, dx = \int_{\R^4}\Phi_t\[\alpha^{-2}\(\Phi - \int_0^vA_5^{-\frac{3}{2}} \, dy\) + \phi_{\ge \frac{1}{2}}r^{-3}\] \, dx\period\end{equation}
Using the divergence theorem on the LHS of \eqref{int_box_4_Phi_Phi_t_calc} and simplifying, we obtain
\begin{equation}\label{int_box_4_Phi_Phi_t_divergence}\int_{\R^4}\square\Phi\Phi_t \, dx = \int_{\R^4}\(\Phi_{tt}\Phi_t - \gradient\Phi \cdot \gradient\Phi_t\) \, dx = \frac{1}{2}\partial_t\int_{\R^4}\(\Phi_t^2 + \abs{\gradient\Phi}^2\) \, dx\period\end{equation}
Putting \eqref{int_box_4_Phi_Phi_t_calc} and \eqref{int_box_4_Phi_Phi_t_divergence} together, we then have
\begin{equation}\label{dt_Phi_t_Phi_r_L_2^2}\partial_t\int_{\R^4}\(\Phi_t^2 + \abs{\gradient\Phi}^2\) \, dx = 2\int_{\R^4}\Phi_t\[\alpha^{-2}\(\Phi - \int_0^vA_5^{-\frac{3}{2}} \, dy\) + \phi_{\ge \frac{1}{2}}r^{-3}\] \, dx\period\end{equation}
Thus, by the Cauchy-Schwarz and Minkowski inequalities,
\begin{equation}\label{dt_Phi_t_Phi_r_L_2^2_C-S_estimate}\abs{\partial_t\int_{\R^4}\(\Phi_t^2 + \abs{\gradient\Phi}^2\) \, dx} \lesssim \norm{\Phi_t}_{L_x^2}\(\norm{\Phi}_{L_x^2} + \norm{\int_0^vA_5^{-\frac{3}{2}} \, dy}_{L_x^2} + \norm{\phi_{\ge \frac{1}{2}}r^{-3}}_{L_x^2}\)\period\end{equation}\normalsize
Obviously the third term in parentheses on the RHS of \eqref{dt_Phi_t_Phi_r_L_2^2_C-S_estimate} is $\lesssim 1$. Combining this observation with \Cref{Phi_t_in_Y_0} and \Cref{Phi_in_Y_0} gives
\begin{equation}\label{dt_Phi_t_Phi_r_L_2^2_C-S_estimate_improved}\abs{\partial_t\int_{\R^4}\(\Phi_t^2 + \abs{\gradient\Phi}^2\) \, dx} \lesssim 1 + \norm{\int_0^vA_5^{-\frac{3}{2}} \, dy}_{L_x^2}\period\end{equation}
Now, since $A_5 \ge 1$, we can again make use of \Cref{Phi_in_Y_0} and estimate the integral on the RHS as
\begin{equation}\label{A_5_integral_estimate}\norm{\int_0^vA_5^{-\frac{3}{2}} \, dy}_{L_x^2} \le \norm{\int_0^vA_5^\frac{1}{2} \, dy}_{L_x^2} \le \norm{\Phi}_{L_x^2} + \norm{r^{-3}\phi_{\ge \frac{1}{2}}}_{L^2} \lesssim 1\period\end{equation}
It follows from \eqref{dt_Phi_t_Phi_r_L_2^2_C-S_estimate_improved} and \eqref{A_5_integral_estimate} that
\begin{equation}\label{dt_Phi_t_Phi_r_L_2^2_C-S_estimate_final}\partial_t\int_{\R^4}\(\Phi_t^2 + \abs{\gradient\Phi}^2\) \, dx \lesssim 1\period\end{equation}
From here we would like to integrate both sides over the interval $I$ to reach the desired conclusion. In order to do this, in view of \Cref{Phi_t_in_Y_0}, it suffices to show $\norm{\gradient\Phi\(0\)}_{L^2} \lesssim 1$. To do this we take the gradient of both sides of \eqref{Phi_H0} to obtain the estimate
\begin{equation}\label{gradient_Phi_0_1}\gradient\Phi\(0\) = A_1^\frac{1}{2}\(0\)\gradient v\(0\) + \frac{1}{2}\int_0^{v\(0\)}A_5^{-\frac{1}{2}}\gradient A_5 \, dy - r^{-3}\phi_{\ge \frac{1}{2}}\period\end{equation}
From here we note that the initial conditions imply $A_1^\frac{1}{2}\(0\) \lesssim 1$. Additionally we know $A_5^{-\frac{1}{2}} \lesssim 1$, and a Maclaurin analysis of $\gradient A_5$ shows $\abs{\gradient A_5} \lesssim \chi_0ry^4 + \chi_\infty r^{-2}$. It follows immediately that we can upgrade \eqref{gradient_Phi_0_1} to
\begin{equation}\label{gradient_Phi_0_2}\abs{\gradient\Phi\(0\)} \lesssim \abs{\gradient v\(0\)} + \abs{v\(0\)} + \abs{r^{-3}\phi_{\ge \frac{1}{2}}}\period\end{equation}
Wielding this estimate and integrating both sides of \eqref{dt_Phi_t_Phi_r_L_2^2_C-S_estimate_final} over the interval $I$ then gives
\begin{equation}\label{Phi_t_Phi_r_L_2^2_estimate}\int_{\R^4}\(\Phi_t^2 + \abs{\gradient\Phi}^2\) \, dx \lesssim 1\comma\end{equation}
the suppressed constant depending on the finite number $T$, but not $t$. Now dropping the $\Phi_t$ term and rewriting gives
\begin{equation}\label{Phi_r_L^2_estimate}\norm{\gradient\Phi}_{L_x^2} \lesssim 1\period\end{equation}
This proves $\gradient\Phi \in Y_0$ and completes the proof of the proposition.\end{proof}
\section{$Y_2$ Analysis of $\Phi$}\label{sec:Y_2}
In this section we upgrade the regularity of $\Phi$ to $Y_2$. In order to do this, we need to show $\laplacian\Phi, \gradient\Phi_t, \Phi_{tt} \in Y_0$. Our approach in this section is as follows. In the first subsection we introduce the Strichartz estimate that we eventually prove. In the next three subsections we derive the three estimates needed to prove the Strichartz estimate from the first subsection holds. This Strichartz estimate guarantees $\gradient\Phi_t, \Phi_{tt} \in Y_0$. Finally, in the fifth subsection, we use this information along with the wave equation \eqref{Phi_wave_equation} to prove $\laplacian\Phi \in Y_0$ and then conclude $\Phi \in Y_2$.
\subsection{Strichartz Estimate for $\Phi_t$}\label{subsec:Y_2_strichartz_estimate}
We first observe that we can relate $v_t$ to $\Phi_t$ by differentiating \eqref{Phi_H0} with respect to $t$ to obtain
\begin{equation}\label{v_t_calc_H2}v_t = A_1^{-\frac{1}{2}}\Phi_t\period\end{equation}
Now, taking the derivative of \eqref{Phi_wave_equation} with respect to $t$ and appealing to \eqref{v_t_calc_H2}, we obtain the following wave equation for $\Phi_t$:
\begin{equation}\label{Phi_t_wave_equation}\square\Phi_t = \alpha^{-2}\(1 - A_1^{-2}\)\Phi_t\period\end{equation}
Therefore we have the following Strichartz estimate for $\Phi_t$:
\begin{equation}\label{Phi_t_strichartz_estimate}\norm{\Phi_t}_{L^pL^\frac{4p}{p - 1}} + \norm{\gradient\Phi_t}_{Y_0} + \norm{\Phi_{tt}}_{Y_0} \lesssim \norm{\gradient\Phi_t\(0\)}_{L^2} + \norm{\Phi_{tt}\(0\)}_{L^2} + \norm{\(1 - A_1^{-2}\)\Phi_t}_{L^1L^2}\comma \quad p \in \[2, \infty\]\period\end{equation}
\subsection{Estimate for $\norm{\(1 - A_1^{-2}\)\Phi_t}_{L^1L^2}$}\label{subsec:Y_2_strichartz_estimate_third_term}
This estimate can be obtained very quickly, but in this subsection we develop our estimates more methodically, as they will help us later in this paper. First, clearly $A_1$ satisfies
\begin{equation}\label{A_1^-sigma_H2}0 \le A_1^{-\sigma} \le 1\comma \quad A_1^{-\sigma} \in L^pL^\infty\comma \, \(p, \sigma\) \in \[1, \infty\] \times \[0, \infty\]\period\end{equation}
Therefore, for $\sigma \in \N$,
\begin{equation}\label{1_-_A_1^-sigma_calc_H2}0 \le 1 - A_1^{-\sigma} = A_1^{-\sigma}\(A_1^\sigma - 1\) = A_1^{-\sigma}\(A_1 - 1\)\sumc{j}{0}{\sigma - 1}{A_1^j} \lesssim_\sigma A_1 - 1\period\end{equation}
Since $\Phi \in Y_1$, we have, interpolating spaces as necessary,
\begin{equation}\label{Phi_H2}\abs{\Phi} \lesssim \chi_0r^{-1} + \chi_\infty r^{-\frac{3}{2}}\comma \quad \Phi \in L^pL^q\comma \, \(p, q\) \in \[1, \infty\] \times \[2, 4\)\period\end{equation}
Clearly $A_5$ satisfies
\begin{equation}\label{A_5^-sigma_H2}0 \le A_5^{-\sigma} \le 1 \le A_5^\sigma \le \infty\comma \quad A_5^{-\sigma} \in L^pL^\infty\comma \, \(p, \sigma\) \in \[1, \infty\] \times \[0, \infty\]\period\end{equation}
Now, relating $v$ to $\Phi$ via \eqref{Phi_H0} and applying \eqref{Phi_H2} and \eqref{A_5^-sigma_H2}, we obtain
\begin{equation}\label{v_H2}\abs{v} \lesssim \abs{\Phi} + \chi_\infty r^{-3} \lesssim \chi_0r^{-1} + \chi_\infty r^{-\frac{3}{2}}\comma \quad v \in L^pL^q\comma \, \(p, q\) \in \[1, \infty\] \times \[2, 4\)\period\end{equation}
This means, in particular, that $\abs{v} \lesssim r^{-1}$. This is a useful estimate because we shall have occasion frequently to estimate terms such as $r^{-\sigma}\abs{\sin\(rv + \phi\)}^\sigma$ for $\sigma \ge 0$. In general, noting that $\phi = \pi$ for $r \le 1$, this can be estimated only as $r^{-\sigma}\min\ob r^\sigma\abs{v}^\sigma, 1\cb = \min\ob\abs{v}^\sigma, r^{-\sigma}\cb$. However, because of our $v$ estimate, we can now always rely upon $r^{-\sigma}\abs{\sin\(rv + \phi\)}^\sigma \lesssim\abs{v}^\sigma$. From here forth we make use of this type of estimate without further remark. We also have, by our initial conditions,
\begin{equation}\label{v_0_H2}\abs{v\(0\)} \lesssim \jbracket{r}^{-\frac{3}{2}}\comma \quad v\(0\) \in L^q\comma \, q \in \[2, \infty\]\period\end{equation}
Thus, using the above remark, \eqref{v_H2}, and \eqref{v_0_H2}, we deduce
\begin{equation}\label{A_1_-_1_H2}\ob\begin{array}{ll}0 \le A_1 - 1 \lesssim v^2 \lesssim \chi_0r^{-2} + \chi_\infty r^{-3}\comma & A_1 - 1 \in L^pL^q\comma \, \(p, q\) \in \[1, \infty\] \times \[1, 2\)\comma \\ 0 \le \(A_1 - 1\)\(0\) \lesssim \jbracket{r}^{-3}\comma & \(A_1 - 1\)\(0\) \in L^q\comma \, q \in \[1, \infty\]\period\end{array}\right.\end{equation}
Combining \eqref{1_-_A_1^-sigma_calc_H2}, \eqref{A_1^-sigma_H2}, and \eqref{A_1_-_1_H2}, we obtain
\begin{equation}\label{1_-_A_1^-sigma_H2}0 \le 1 - A_1^{-\sigma} \lesssim \min\ob1, v^2\cb \lesssim \jbracket{r}^{-3}\comma \quad 1 - A_1^{-\sigma} \in L^pL^q\comma \, \(p, q, \sigma\) \in \[1, \infty\]^2 \times \N\period\end{equation}
For $\Phi_t$ we have no pointwise estimates, but only \Cref{Phi_in_Y_1}, which says
\begin{equation}\label{Phi_t_H2}\Phi_t \in L^pL^2\comma \, p \in \[1, \infty\]\period\end{equation}
It follows directly from \eqref{1_-_A_1^-sigma_H2} and \eqref{Phi_t_H2} that the third term on the RHS of \eqref{Phi_t_strichartz_estimate} can be estimated by
\begin{equation}\label{Phi_t_strichartz_estimate_third_term}\norm{\(1 - A_1^{-2}\)\Phi_t}_{L^1L^2} \lesssim 1\period\end{equation}
\subsection{Estimate for $\norm{\gradient\Phi_t\(0\)}_{L^2}$}\label{subsec:Y_2_strichartz_estimate_first_term}
Taking the gradient of \eqref{v_t_calc_H2}, we obtain
\begin{equation}\label{gradient_Phi_t_calc_H2}\gradient\Phi_t = \frac{1}{2}A_1^{-\frac{1}{2}}v_t\gradient A_1 + A_1^\frac{1}{2}\gradient v_t\period\end{equation}
By our initial conditions,
\begin{equation}\label{v_t_0_H2}\abs{v_t\(0\)} \lesssim \jbracket{r}^{-\frac{3}{2}}\comma \quad v_t\(0\) \in L^q\comma \, q \in \[2, \infty\]\period\end{equation}
Using \eqref{v_H2} and \eqref{v_0_H2}, we estimate directly
\begin{equation}\label{A_1^sigma_H2}0 \le A_1^\sigma \lesssim 1+ v^{2\sigma} \lesssim \chi_0r^{-2\sigma} + \chi_\infty\comma \quad A_1^\sigma\(0\) \lesssim 1\comma \quad A_1^\sigma\(0\) \in L^\infty\comma \, \sigma \in \[0, \infty\]\period\end{equation}
It follows from \eqref{gradient_Phi_t_calc_H2}, \eqref{A_1^-sigma_H2}, \eqref{v_t_0_H2}, and \eqref{A_1^sigma_H2} that
\begin{equation}\label{gradient_Phi_t_0_H2}\abs{\gradient\Phi_t\(0\)} \lesssim \abs{\gradient A_1\(0\)} + \abs{\gradient v_t\(0\)}\period\end{equation}
To estimate the $\gradient A_1$ term, it will require information about $\gradient v$. We relate $\gradient v$ to $\gradient\Phi$ by taking the gradient of \eqref{Phi_H0} to obtain
\begin{equation}\label{gradient_v_calc_H2}\gradient v = A_1^{-\frac{1}{2}}\(\gradient\Phi - \frac{1}{2}\int_0^vA_5^{-\frac{1}{2}}\gradient A_5 \, dy + r^{-3}\phi_{\ge \frac{1}{2}}\)\period\end{equation}
By \Cref{Phi_in_Y_1},
\begin{equation}\label{gradient_Phi_H2}\gradient\Phi \in L^pL^2\comma \, p \in \[1, \infty\]\period\end{equation}
A Maclaurin analysis of $\gradient A_5$ yields
\begin{equation}\label{gradient_A_5_H2}\abs{\gradient A_5} \lesssim \chi_0ry^4 + \chi_\infty r^{-2}\period\end{equation}
It now follows from \eqref{gradient_v_calc_H2}, \eqref{A_1^-sigma_H2}, \eqref{gradient_Phi_H2}, \eqref{A_5^-sigma_H2}, and \eqref{gradient_A_5_H2} that
\begin{equation}\label{gradient_v_H2}\abs{\gradient v} \lesssim \abs{\gradient\Phi} + \chi_0r\abs{v}^5 + \chi_\infty r^{-2}\abs{v}\period\end{equation}
Moreover, by our initial conditions,
\begin{equation}\label{gradient_v_0_H2}\abs{\gradient v\(0\)} \lesssim \jbracket{r}^{-\frac{3}{2}}\comma \quad \gradient v\(0\) \in L^q\comma \, q \in \[2, \infty\]\period\end{equation}
Now a direct differentiation of $A_1$ and a Maclaurin analysis combined with \eqref{v_H2}, \eqref{gradient_v_H2}, \eqref{v_0_H2}, and \eqref{gradient_v_0_H2} shows
\begin{equation}\label{gradient_A_1_H2}\ob\begin{array}{ll}\abs{\gradient A_1} \lesssim \abs{v}\abs{\gradient v} \lesssim \chi_0\(r^{-1}\abs{\gradient\Phi} + r^{-5}\) + \chi_\infty\(r^{-\frac{3}{2}}\abs{\gradient\Phi} + r^{-5}\)\comma & \\ \abs{\gradient A_1\(0\)} \lesssim \abs{v\(0\)}\abs{\gradient v\(0\)} \lesssim \jbracket{r}^{-3}\comma & \gradient A_1\(0\) \in L^q\comma \, q \in \[1, \infty\]\period\end{array}\right.\end{equation}
By our initial conditions,
\begin{equation}\label{gradient_v_t_0_H2}\abs{\gradient v_t\(0\)} \lesssim_\epsilon \chi_0r^{-\epsilon} + \chi_\infty r^{-\frac{3}{2}}\comma \, \epsilon > 0\comma \quad \gradient v_t\(0\) \in L^q\comma \, q \in \[2, \infty\)\period\end{equation}
It now follows from \eqref{gradient_Phi_t_0_H2}, \eqref{gradient_A_1_H2}, and \eqref{gradient_v_t_0_H2} that the first term on the RHS of \eqref{Phi_t_strichartz_estimate} can be estimated by
\begin{equation}\label{Phi_t_strichartz_estimate_first_term}\norm{\gradient\Phi_t\(0\)}_{L^2} \lesssim 1\period\end{equation}
\subsection{Estimate for $\norm{\Phi_{tt}\(0\)}_{L^2}$}\label{subsec:Y_2_strichartz_estimate_second_term}
Taking the derivative of \eqref{v_t_calc_H2} with respect to $t$, we obtain
\begin{equation}\label{Phi_tt_calc_H2}\Phi_{tt} = \frac{1}{2}A_1^{-\frac{1}{2}}\partial_tA_1v_t + A_1^\frac{1}{2}v_{tt}\period\end{equation}
Combining \eqref{v_t_calc_H2} and \eqref{A_1^-sigma_H2} leads to
\begin{equation}\label{v_t_H2}\abs{v_t} \lesssim \abs{\Phi_t}\comma \quad v_t \in L^pL^2\comma \, p \in \[1, \infty\]\period\end{equation}
Now a direct differentiation of $A_1$ combined with \eqref{v_H2}, \eqref{v_t_H2}, \eqref{v_0_H2}, and \eqref{v_t_0_H2} shows
\begin{equation}\label{dt_A_1_H2}\ob\begin{array}{ll}\abs{\partial_tA_1} \lesssim \abs{v}\abs{v_t} \lesssim \chi_0r^{-1}\abs{\Phi_t} + \chi_\infty r^{-\frac{3}{2}}\abs{\Phi_t}\comma & \partial_tA_1 \in L^pL^q\comma \, \(p, q\) \in \[1, \infty\] \times \[1, \frac{4}{3}\)\comma \\ \abs{\partial_tA_1\(0\)} \lesssim \jbracket{r}^{-3}\comma & \partial_tA_1\(0\) \in L^q\comma \, q \in \[1, \infty\]\period\end{array}\right.\end{equation}
It follows from \eqref{Phi_tt_calc_H2}, \eqref{A_1^-sigma_H2}, \eqref{dt_A_1_H2}, and \eqref{A_1^sigma_H2} that
\begin{equation}\label{Phi_tt_H2}\abs{\Phi_{tt}\(0\)} \lesssim \abs{v_t\(0\)} + \abs{v_{tt}\(0\)}\period\end{equation}
Solving \eqref{v_wave_equation} for $v_{tt}$ and evaluating at time $t = 0$, we obtain
\begin{equation}\label{v_tt_calc_H2}\abs{v_{tt}\(0\)} \le \abs{\laplacian v\(0\)} + \abs{F\(v\)\(0\)}\period\end{equation}
By our initial conditions,
\begin{equation}\label{laplacian_v_0_H2}\abs{\laplacian v\(0\)} \lesssim_\epsilon \chi_0r^{-\epsilon} + \chi_\infty r^{-\frac{3}{2}}\comma \, \epsilon > 0\comma \quad \laplacian v\(0\) \in L^q\comma \, q \in \[2, \infty\)\period\end{equation}
To estimate the $F\(v\)\(0\)$ term, we note that our initial conditions imply
\begin{equation}\label{v_r_0_H2}\abs{v_r\(0\)} \lesssim \jbracket{r}^{-\frac{3}{2}}\comma \quad v_r\(0\) \in L^q\comma \, q \in \[2, \infty\]\period\end{equation}
Using \eqref{A_1^-sigma_H2}, \Cref{F_j_analyticity}, \eqref{v_0_H2}, \eqref{v_t_0_H2}, \eqref{v_r_0_H2}, and \eqref{1_-_A_1^-sigma_H2}, we calculate eventually that
\begin{equation}\label{FFFF_H2}\abs{F\(v\)\(0\)} \lesssim \jbracket{r}^{-\frac{7}{2}}\comma \quad F\(v\)\(0\) \in L^q\comma \, q \in \(1, \infty\]\period\end{equation}
Putting \eqref{v_tt_calc_H2}, \eqref{laplacian_v_0_H2}, and \eqref{FFFF_H2} together, we obtain
\begin{equation}\label{v_tt_0_H2}\abs{v_{tt}\(0\)} \lesssim_\epsilon \chi_0r^{-\epsilon} + \chi_\infty r^{-\frac{3}{2}}\comma \, \epsilon > 0\comma \quad v_{tt}\(0\) \in L^q\comma \, q \in \[2, \infty\)\period\end{equation}
It follows from \eqref{Phi_tt_H2}, \eqref{v_t_0_H2}, and \eqref{v_tt_0_H2} that the second term on the RHS of \eqref{Phi_t_strichartz_estimate} can be estimated by
\begin{equation}\label{Phi_t_strichartz_estimate_second_term}\norm{\Phi_{tt}\(0\)}_{L^2} \lesssim 1\period\end{equation}
\subsection{Completion of the proof that $\Phi \in Y_2$}\label{subsec:Y_2_completion}
Combining \eqref{Phi_t_strichartz_estimate}, \eqref{Phi_t_strichartz_estimate_first_term}, \eqref{Phi_t_strichartz_estimate_second_term}, and \eqref{Phi_t_strichartz_estimate_third_term}, we arrive at
\begin{equation}\label{Phi_t_strichartz_estimate_result}\norm{\Phi_t}_{L^pL^\frac{4p}{p - 1}} + \norm{\gradient\Phi_t}_{Y_0} + \norm{\Phi_{tt}}_{Y_0} \lesssim 1\comma \quad p \in \[2, \infty\]\period\end{equation}
This proves $\gradient\Phi_t, \Phi_{tt} \in Y_0$, leaving us with only $\laplacian\Phi$ to analyze. Looking at \eqref{Phi_wave_equation}, we obtain (making use of \eqref{A_5^-sigma_H2})
\begin{equation}\label{laplacian_Phi_H2_calc}\abs{\laplacian\Phi} \lesssim \abs{\Phi_{tt}} + \abs{\Phi} + \abs{v} + \abs{\phi_{\ge \frac{1}{2}}r^{-3}}\period\end{equation}
By \eqref{Phi_t_strichartz_estimate_result}, \eqref{Phi_H2}, \eqref{v_H2}, and \Cref{phi_ge_1_2_properties}, this proves
\begin{equation}\label{laplacian_Phi_H2}\norm{\laplacian\Phi}_{Y_0} \lesssim 1\period\end{equation}
 By \eqref{Phi_t_strichartz_estimate_result} and \eqref{laplacian_Phi_H2}, then, we have proven $\Phi \in Y_2$.
\section{$Y_3$ Analysis of $\Phi$}\label{sec:Y_3}
In this section we upgrade the regularity of $\Phi$ to $Y_3$. In order to do this, we need to show $\gradient\laplacian\Phi, \laplacian\Phi_t, \gradient\Phi_{tt}, \Phi_{ttt} \in Y_0$. Our approach in this section is as follows. In the first subsection we introduce the Strichartz estimate that we eventually prove. In the next four subsections we derive the four estimates needed to prove the Strichartz estimate from the first subsection holds. This Strichartz estimate guarantees $\gradient\Phi_{tt}, \Phi_{ttt} \in Y_0$. Finally, in the sixth subsection, we use this information along with the wave equation \eqref{Phi_wave_equation} to prove $\laplacian\Phi_t, \gradient\laplacian\Phi \in Y_0$ and then conclude $\Phi \in Y_3$.
\subsection{Strichartz Estimate for $\Phi_{tt}$}\label{subsec:Y_3_strichartz_estimate}
 Taking the derivative of \eqref{Phi_t_wave_equation} with respect to $t$, we obtain the following wave equation for $\Phi_{tt}$:
\begin{equation}\label{Phi_tt_wave_equation}\square\Phi_{tt} = \alpha^{-2}\[2A_1^{-3}\partial_tA_1\Phi_t + \(1 - A_1^{-2}\)\Phi_{tt}\]\period\end{equation}
Therefore we have the following Strichartz estimate for $\Phi_{tt}$:
\begin{equation}\label{Phi_tt_strichartz_estimate}\begin{array}{l}\norm{\Phi_{tt}}_{L^pL^\frac{4p}{p - 1}} + \norm{\gradient\Phi_{tt}}_{Y_0} + \norm{\Phi_{ttt}}_{Y_0} \lesssim \\ \lesssim \norm{\gradient\Phi_{tt}\(0\)}_{L^2} + \norm{\Phi_{ttt}\(0\)}_{L^2} + \norm{A_1^{-3}\partial_tA_1\Phi_t}_{L^1L^2} + \norm{\(1 - A_1^{-2}\)\Phi_{tt}}_{L^1L^2}\comma \quad p \in \[2, \infty\]\period\end{array}\end{equation}
\subsection{Estimate for $\norm{A_1^{-3}\partial_tA_1\Phi_t}_{L^1L^2}$}\label{subsec:Y_3_strichartz_estimate_third_term}
We first upgrade our estimates for $\Phi$, using the fact now that $\Phi \in Y_2$, from \eqref{Phi_H2} to
\begin{equation}\label{Phi_H3}\abs{\Phi} \lesssim_\epsilon \chi_0r^{-\epsilon} + \chi_\infty r^{-\frac{3}{2}}\comma \, \epsilon > 0\comma \quad \Phi \in L^pL^q\comma \, \(p, q\) \in \[1, \infty\] \times \[2, \infty\)\period\end{equation}
Therefore we upgrade \eqref{v_H2} to
\begin{equation}\label{v_H3}\abs{v} \lesssim \abs{\Phi} + \chi_\infty r^{-3} \lesssim_\epsilon \chi_0r^{-\epsilon} + \chi_\infty r^{-\frac{3}{2}}\comma \, \epsilon > 0\comma \quad v \in L^pL^q\comma \, \(p, q\) \in \[1, \infty\] \times \[2, \infty\)\period\end{equation}
We next upgrade our estimates for $\Phi_t$, using the fact now that $\Phi_t \in Y_1$ and the Strichartz estimate \eqref{Phi_t_strichartz_estimate_result}, from \eqref{Phi_t_H2}  to
\begin{equation}\label{Phi_t_H3}\abs{\Phi_t} \lesssim \chi_0r^{-1} + \chi_\infty r^{-\frac{3}{2}}\comma \quad \Phi_t \in L^pL^q\comma \, p \in \[1, \infty\]\comma \, q \in \[2, \min\ob8, \frac{4p}{p - 1}\cb\]\period\end{equation}
Therefore we upgrade \eqref{v_t_H2} to
\begin{equation}\label{v_t_H3}\abs{v_t} \lesssim \abs{\Phi_t} \lesssim \chi_0r^{-1} + \chi_\infty r^{-\frac{3}{2}}\comma \quad v_t \in L^pL^q\comma \, p \in \[1, \infty\] \comma \, q \in \[2, \min\ob8, \frac{4p}{p - 1}\cb\]\period\end{equation}
Using \eqref{v_H3} and \eqref{v_t_H3}, we now upgrade \eqref{dt_A_1_H2} to
\begin{equation}\label{dt_A_1_H3}\ob\begin{array}{ll}\abs{\partial_tA_1} \lesssim \abs{v}\abs{v_t} \lesssim_\epsilon \chi_0r^{-1 - \epsilon} + \chi_\infty r^{-3}\comma \, \epsilon > 0\comma & \partial_tA_1 \in L^pL^q\comma \, p \in \[1, \infty\]\comma \, q \in \[1, \min\ob8, \frac{4p}{p - 1}\cb\)\comma \\ \abs{\partial_tA_1\(0\)} \lesssim \jbracket{r}^{-3}\comma & \partial_tA_1\(0\) \in L^q\comma \, q \in \[1, \infty\]\period\end{array}\right.\end{equation}
It follows directly from \eqref{A_1^-sigma_H2}, \eqref{dt_A_1_H3}, and \eqref{Phi_t_H3} that the third term on the RHS of \eqref{Phi_tt_strichartz_estimate} can be estimated by
\begin{equation}\label{Phi_tt_strichartz_estimate_third_term}\norm{A_1^{-3}\partial_tA_1\Phi_t}_{L^1L^2} \lesssim 1\period\end{equation}
\subsection{Estimate for $\norm{\(1 - A_1\)^{-2}\Phi_{tt}}_{L^1L^2}$}\label{subsec:Y_3_strichartz_estimate_fourth_term}
For $\Phi_{tt}$ we have no pointwise estimates, but $\Phi_{tt} \in Y_0$ implies
\begin{equation}\label{Phi_tt_H3}\Phi_{tt} \in L^pL^2\comma \, p \in \[1, \infty\]\period\end{equation}
It follows directly from \eqref{1_-_A_1^-sigma_H2} and \eqref{Phi_tt_H3} that the fourth term on the RHS of \eqref{Phi_tt_strichartz_estimate} can be estimated by
\begin{equation}\label{Phi_tt_strichartz_estimate_fourth_term}\norm{\(1 - A_1^{-2}\)\Phi_{tt}}_{L^1L^2} \lesssim 1\period\end{equation}
\subsection{Estimate for $\norm{\gradient\Phi_{tt}\(0\)}_{L^2}$}\label{subsec:Y_3_strichartz_estimate_first_term}
Taking the gradient of \eqref{Phi_tt_calc_H2}, we obtain
\begin{equation}\label{gradient_Phi_tt_calc_H3}\gradient\Phi_{tt} = -\frac{1}{4}A_1^{-\frac{3}{2}}\partial_tA_1v_t\gradient A_1 + \frac{1}{2}A_1^{-\frac{1}{2}}v_t\gradient\partial_tA_1 + \frac{1}{2}A_1^{-\frac{1}{2}}\partial_tA_1\gradient v_t + \frac{1}{2}A_1^{-\frac{1}{2}}v_{tt}\gradient A_1 + A_1^\frac{1}{2}\gradient v_{tt}\period\end{equation}
We next upgrade our estimates for $\gradient\Phi$, using the fact now that $\gradient\Phi \in Y_1$, from \eqref{gradient_Phi_H2} to
\begin{equation}\label{gradient_Phi_H3}\abs{\gradient\Phi} \lesssim \chi_0r^{-1} + \chi_\infty r^{-\frac{3}{2}}\comma \quad \gradient\Phi \in L^pL^q\comma \, \(p, q\) \in \[1, \infty\] \times \[2, 4\)\period\end{equation}
Therefore we upgrade \eqref{gradient_v_H2} to
\begin{equation}\label{gradient_v_H3}\abs{\gradient v} \lesssim \abs{\gradient\Phi} + \chi_0r\abs{v}^5 + \chi_\infty r^{-2}\abs{v} \lesssim \chi_0r^{-1} + \chi_\infty r^{-\frac{3}{2}}\comma \quad \gradient v \in L^pL^q\comma \, \(p, q\) \in \[1, \infty\] \times \[2, 4\)\period\end{equation}
Using \eqref{v_H3} and \eqref{gradient_v_H3}, we now upgrade \eqref{gradient_A_1_H2} to
\begin{equation}\label{gradient_A_1_H3}\ob\begin{array}{ll}\abs{\gradient A_1} \lesssim \abs{v}\abs{\gradient v} \lesssim_\epsilon \chi_0r^{-1 - \epsilon} + \chi_\infty r^{-3}\comma \, \epsilon > 0\comma & \gradient A_1 \in L^pL^q\comma \, \(p, q\) \in \[1, \infty\] \times \[1, 4\)\comma \\ \abs{\gradient A_1\(0\)} \lesssim \abs{v\(0\)}\abs{\gradient v\(0\)} \lesssim \jbracket{r}^{-3}\comma & \gradient A_1\(0\) \in L^q\comma \, q \in \[1, \infty\]\period\end{array}\right.\end{equation}
Using \eqref{v_H3}, we also upgrade \eqref{A_1^sigma_H2} to
\begin{equation}\label{A_1^sigma_H3}0 \le A_1^\sigma \lesssim 1+ v^{2\sigma} \lesssim \chi_0r^{-\epsilon} + \chi_\infty\comma \, \epsilon > 0\comma \quad A_1^\sigma\(0\) \lesssim 1\comma \quad A_1^\sigma\(0\) \in L^\infty\comma \, \sigma \in \[0, \infty\]\period\end{equation}
It follows from \eqref{gradient_Phi_tt_calc_H3}, \eqref{A_1^-sigma_H2}, \eqref{dt_A_1_H3}, \eqref{v_t_0_H2}, \eqref{gradient_A_1_H3}, and \eqref{A_1^sigma_H3} that
\begin{equation}\label{gradient_Phi_tt_H3}\abs{\gradient\Phi_{tt}\(0\)} \lesssim \abs{\gradient A_1\(0\)} + \abs{\gradient\partial_tA_1\(0\)} + \abs{\gradient v_t\(0\)} + \abs{v_{tt}\(0\)} + \abs{\gradient v_{tt}\(0\)}\period\end{equation}
The first, third, and fourth terms on the RHS of \eqref{gradient_Phi_tt_H3} are already under control. For the second term, two direct differentiations of $A_1$, a Maclaurin analysis, \eqref{v_0_H2}, \eqref{gradient_v_t_0_H2}, \eqref{v_t_0_H2}, and \eqref{gradient_v_0_H2} show
\begin{equation}\label{gradient_dt_A_1_H3}\abs{\gradient\partial_tA_1\(0\)} \lesssim \abs{v\(0\)}\abs{\gradient v_t\(0\)} + \abs{v_t\(0\)}\abs{\gradient v\(0\)} \lesssim_\epsilon \chi_0r^{-\epsilon} + \chi_\infty r^{-3}\comma \, \epsilon > 0\comma \quad \gradient\partial_tA_1\(0\) \in L^q\comma \, q \in \[1, \infty\)\period\end{equation}
For the fifth term on the RHS of \eqref{gradient_Phi_tt_H3}, we take the gradient of \eqref{v_wave_equation} and solve for $\gradient v_{tt}$ to obtain
\begin{equation}\label{gradient_v_tt_calc_H3}\abs{\gradient v_{tt}\(0\)} \le \abs{\gradient\laplacian v\(0\)} + \abs{\gradient\[F\(v\)\]\(0\)}\period\end{equation}
By our initial conditions,
\begin{equation}\label{gradient_laplacian_v_0_H3}\abs{\gradient\laplacian v\(0\)} \lesssim \chi_0r^{-1} + \chi_\infty r^{-\frac{3}{2}}\comma \quad \gradient\laplacian v\(0\) \in L^q \comma \, q \in \[2, 4\)\period\end{equation}
To estimate the $\gradient\[F\(v\)\]\(0\)$ term, we note that our initial conditions imply
\begin{equation}\label{gradient_v_r_0_H3}\abs{\gradient v_r\(0\)} \lesssim_\epsilon \chi_0r^{-\epsilon} + \chi_\infty r^{-\frac{3}{2}}\comma \, \epsilon > 0\comma \quad \gradient v_r\(0\) \in L^q\comma \, q \in \[2, \infty\)\period\end{equation}
Using \eqref{A_1^-sigma_H2}, \eqref{gradient_A_1_H3}, \Cref{F_j_analyticity}, \eqref{v_0_H2}, \eqref{gradient_v_0_H2}, \eqref{v_t_0_H2}, \eqref{v_r_0_H2}, \eqref{gradient_v_t_0_H2}, \eqref{gradient_v_r_0_H3}, and \eqref{1_-_A_1^-sigma_H2}, we calculate eventually that
\begin{equation}\label{gradient_FFFF_H3}\abs{\gradient\[F\(v\)\]\(0\)} \lesssim_\epsilon \chi_0r^{-\epsilon} + \chi_\infty r^{-\frac{7}{2}}\comma \, \epsilon > 0\comma \quad \gradient\[F\(v\)\]\(0\) \in L^q\comma \, q \in \(1, \infty\)\period\end{equation}
Putting \eqref{gradient_v_tt_calc_H3}, \eqref{gradient_laplacian_v_0_H3}, and \eqref{gradient_FFFF_H3} together, we obtain
\begin{equation}\label{gradient_v_tt_0_H3}\abs{\gradient v_{tt}\(0\)} \lesssim \chi_0r^{-1} + \chi_\infty r^{-\frac{3}{2}}\comma \quad \gradient v_{tt}\(0\) \in L^q\comma \, q \in \[2, 4\)\period\end{equation}
It now follows from \eqref{gradient_Phi_tt_H3}, \eqref{gradient_A_1_H3}, \eqref{gradient_dt_A_1_H3}, \eqref{gradient_v_t_0_H2}, \eqref{v_tt_0_H2}, and \eqref{gradient_v_tt_0_H3} that the first term on the RHS of \eqref{Phi_tt_strichartz_estimate} can be estimated by
\begin{equation}\label{Phi_tt_strichartz_estimate_first_term}\norm{\gradient\Phi_{tt}\(0\)}_{L^2} \lesssim 1\period\end{equation}
\subsection{Estimate for $\norm{\Phi_{ttt}\(0\)}_{L^2}$}\label{subsec:Y_3_strichartz_estimate_second_term}
Taking the derivative of \eqref{Phi_tt_calc_H2} with respect to $t$, we obtain
\begin{equation}\label{Phi_ttt_calc_H3}\Phi_{ttt} = -\frac{1}{4}A_1^{-\frac{3}{2}}\(\partial_tA_1\)^2v_t + \frac{1}{2}A_1^{-\frac{1}{2}}\partial_{tt}A_1v_t + A_1^{-\frac{1}{2}}\partial_tA_1v_{tt} + A_1^\frac{1}{2}v_{ttt}\period\end{equation}
It follows from \eqref{Phi_ttt_calc_H3}, \eqref{A_1^-sigma_H2}, \eqref{dt_A_1_H3}, and \eqref{A_1^sigma_H3} that
\begin{equation}\label{Phi_ttt_H3}\abs{\Phi_{ttt}\(0\)} \lesssim \abs{v_t\(0\)} + \abs{\partial_{tt}A_1\(0\)} + \abs{v_{tt}\(0\)} + \abs{v_{ttt}\(0\)}\period\end{equation}
The first and third terms on the RHS we already have under control. For the second term, we first relate $v_{tt}$ to $\Phi_{tt}$ by differentiating \eqref{v_t_calc_H2} with respect to $t$ to obtain
\begin{equation}\label{v_tt_calc_H3}v_{tt} = -\frac{1}{2}A_1^{-\frac{3}{2}}\partial_tA_1\Phi_t + A_1^{-\frac{1}{2}}\Phi_{tt}\period\end{equation}
Combining \eqref{v_tt_calc_H3}, \eqref{A_1^-sigma_H2}, \eqref{dt_A_1_H3}, \eqref{Phi_t_H3}, \eqref{Phi_tt_H3}, \eqref{v_H3}, and \eqref{v_t_H3} leads to
\begin{equation}\label{v_tt_H3}\abs{v_{tt}} \lesssim \abs{v}\abs{v_t}\abs{\Phi_t} + \abs{\Phi_{tt}} \lesssim_\epsilon \chi_0\(r^{-2 - \epsilon} + \abs{\Phi_{tt}}\) + \chi_\infty\(r^{-\frac{9}{2}} + \abs{\Phi_{tt}}\)\comma \, \epsilon > 0\comma \quad v_{tt} \in L^pL^2\comma \, p \in \[1, \infty\]\period\end{equation}
Now, two direct differentiations of $A_1$, \eqref{v_H3}, \eqref{v_tt_H3}, \eqref{v_t_H3}, \eqref{v_0_H2}, \eqref{v_tt_0_H2}, and \eqref{v_t_0_H2} show
\begin{equation}\label{dtt_A_1_H3}\ob\begin{array}{ll}\abs{\partial_{tt}A_1} \lesssim \abs{v}\abs{v_{tt}} + v_t^2 \lesssim_\epsilon \chi_0\(r^{-2 - \epsilon} + \abs{\Phi_{tt}}\) + \chi_\infty\(r^{-3} + r^{-\frac{3}{2}}\abs{\Phi_{tt}}\)\comma \, \epsilon > 0\comma & \partial_{tt}A_1 \in L^pL^q\comma \, \(p, q\) \in \[1, \infty\] \times \[1, 2\)\comma \\ \abs{\partial_{tt}A_1\(0\)} \lesssim_\epsilon \chi_0r^{-\epsilon} + \chi_\infty r^{-3}\comma \, \epsilon > 0\comma & \partial_{tt}A_1\(0\) \in L^q\comma \, q \in \[1, \infty\)\period\end{array}\right.\end{equation}
For the fourth term on the RHS of \eqref{Phi_ttt_H3}, we differentiate \eqref{v_wave_equation} with respect to $t$ and solve for $v_{ttt}$ to obtain
\begin{equation}\label{v_ttt_calc_H3}\abs{v_{ttt}\(0\)} \le \abs{\laplacian v_t\(0\)} + \abs{\partial_t\[F\(v\)\]\(0\)}\period\end{equation}
By our initial conditions,
\begin{equation}\label{laplacian_v_t_0_H3}\abs{\laplacian v_t\(0\)} \lesssim \chi_0r^{-1} + \chi_\infty r^{-\frac{3}{2}}\comma \quad \laplacian v_t\(0\) \in L^q\comma \, q \in \[2, 4\)\period\end{equation}
To estimate the $\partial_t\[F\(v\)\]\(0\)$ term, we note that our initial conditions imply
\begin{equation}\label{v_rt_0_H3}\abs{v_{rt}\(0\)} \lesssim_\epsilon \chi_0r^{-\epsilon} + \chi_\infty r^{-\frac{3}{2}}\comma \, \epsilon > 0\comma \quad v_{rt}\(0\) \in L^q\comma \, q \in \[2, \infty\)\period\end{equation}
Using \eqref{A_1^-sigma_H2}, \eqref{dt_A_1_H3}, \Cref{F_j_analyticity}, \eqref{v_0_H2}, \eqref{v_t_0_H2}, \eqref{v_r_0_H2}, \eqref{v_tt_0_H2}, \eqref{v_rt_0_H3}, and \eqref{1_-_A_1^-sigma_H2}, we calculate eventually that
\begin{equation}\label{FFFF_t_H3}\abs{\partial_t\[F\(v\)\]\(0\)} \lesssim_\epsilon \chi_0r^{-\epsilon}  + \chi_\infty r^{-\frac{7}{2}}\comma \, \epsilon > 0\comma \quad \partial_t\[F\(v\)\]\(0\) \in L^q\comma \, q \in \(1, \infty\)\period\end{equation}
Putting \eqref{v_ttt_calc_H3}, \eqref{laplacian_v_t_0_H3}, and \eqref{FFFF_t_H3} together, we obtain
\begin{equation}\label{v_ttt_0_H3}\abs{v_{ttt}\(0\)} \lesssim \chi_0r^{-1} + \chi_\infty r^{-\frac{3}{2}}\comma \quad v_{ttt}\(0\) \in L^q\comma \, q \in \[2, 4\)\period\end{equation}
It now follows from \eqref{Phi_ttt_H3}, \eqref{v_t_0_H2}, \eqref{dtt_A_1_H3}, \eqref{v_tt_0_H2}, and \eqref{v_ttt_0_H3} that the second term on the RHS of \eqref{Phi_tt_strichartz_estimate} can be estimated by
\begin{equation}\label{Phi_tt_strichartz_estimate_second_term}\norm{\Phi_{ttt}\(0\)}_{L^2} \lesssim 1\period\end{equation}
\subsection{Completion of the Proof that $\Phi \in Y_3$}\label{subsec:Y_3_completion}
Combining \eqref{Phi_tt_strichartz_estimate}, \eqref{Phi_tt_strichartz_estimate_first_term}, \eqref{Phi_tt_strichartz_estimate_second_term}, \eqref{Phi_tt_strichartz_estimate_third_term}, and \eqref{Phi_tt_strichartz_estimate_fourth_term}, we arrive at
\begin{equation}\label{Phi_tt_strichartz_estimate_result}\norm{\Phi_{tt}}_{L^pL^\frac{4p}{p - 1}} + \norm{\gradient\Phi_{tt}}_{Y_0} + \norm{\Phi_{ttt}}_{Y_0} \lesssim 1\comma \quad p \in \[2, \infty\]\period\end{equation}
This proves $\gradient\Phi_{tt}, \Phi_{ttt} \in Y_0$, leaving us with only $\gradient\laplacian\Phi$ and $\laplacian\Phi_t$ to analyze. Looking at \eqref{Phi_t_wave_equation}, we obtain
\begin{equation}\label{laplacian_Phi_t_H3_calc}\abs{\laplacian\Phi_t} \lesssim \abs{\Phi_{ttt}} + \abs{\(1 - A_1^{-2}\)\Phi_t}\period\end{equation}
By \eqref{Phi_tt_strichartz_estimate_result}, \eqref{1_-_A_1^-sigma_H2}, and \eqref{Phi_t_H3} this proves
\begin{equation}\label{laplacian_Phi_t_H3}\norm{\laplacian\Phi_t}_{Y_0} \lesssim 1\period\end{equation}
This leaves us with only $\gradient\laplacian\Phi$ to analyze. Going back to the wave equation \eqref{Phi_wave_equation} for $\Phi$, taking its gradient, and solving for $\gradient\laplacian\Phi$, we obtain
\begin{equation}\label{gradient_laplacian_Phi_calc_H3}\abs{\gradient\laplacian\Phi} \lesssim \abs{\gradient\Phi_{tt}} + \abs{\gradient\Phi} + \gradient\(\int_0^vA_5^{-\frac{3}{2}} \, dy\) + \gradient\(\phi_{\ge \frac{1}{2}}r^{-3}\)\period\end{equation}
Making use of \eqref{gradient_laplacian_Phi_calc_H3}, \eqref{Phi_tt_strichartz_estimate_result}, \eqref{gradient_Phi_H3}, and \Cref{phi_ge_1_2_properties}, we conclude
\begin{equation}\label{gradient_laplacian_Phi_calc_2_H3}\norm{\gradient\laplacian\Phi}_{Y_0} \lesssim 1 + \norm{\gradient\(\int_0^vA_5^{-\frac{3}{2}} \, dy\)}_{Y_0}\period\end{equation}
We now calculate directly that
\begin{equation}\label{gradient_int_A_5_calc_H3}\gradient\(\int_0^vA_5^{-\frac{3}{2}} \, dy\) = A_1^{-\frac{3}{2}}\gradient v - \frac{3}{2}\int_0^vA_5^{-\frac{5}{2}}\gradient A_5 \, dy\period\end{equation}
Therefore, reusing the Maclaurin analysis that was done in order to derive \eqref{gradient_v_H2} and using \eqref{gradient_v_H3} and \eqref{v_H3}, we estimate
\begin{equation}\label{gradient_int_A_5_H3}\abs{\gradient\(\int_0^vA_5^{-\frac{3}{2}} \, dy\)} \lesssim \abs{\gradient v} + \chi_0r\abs{v}^5 + \chi_\infty r^{-2}\abs{v} \lesssim \chi_0r^{-1} + \chi_\infty r^{-\frac{3}{2}}\period\end{equation}
Estimate \eqref{gradient_int_A_5_H3} certainly implies that \eqref{gradient_laplacian_Phi_calc_2_H3} can be upgraded to
\begin{equation}\label{gradient_laplacian_Phi_H3}\norm{\gradient\laplacian\Phi}_{Y_0} \lesssim 1\period\end{equation}
By \eqref{Phi_tt_strichartz_estimate_result}, \eqref{laplacian_Phi_t_H3}, and \eqref{gradient_laplacian_Phi_H3}, then, we have proven $\Phi \in Y_3$.
\section{$Y_4$ Analysis of $\Phi$}\label{sec:Y_4}
In this section we upgrade the regularity of $\Phi$ to $Y_4$. In order to do this, we need to show $\laplacian^2\Phi, \gradient\laplacian\Phi_t, \laplacian\Phi_{tt}, \gradient\Phi_{ttt}, \Phi_{tttt} \in Y_0$. Our approach in this section is as follows. In the first subsection we introduce the Strichartz estimate that we eventually prove. In the next six subsections we derive the six estimates needed to prove the Strichartz estimate from the first subsection holds. This Strichartz estimate guarantees $\gradient\Phi_{ttt}, \Phi_{tttt} \in Y_0$. Finally, in the eighth subsection, we use this information along with the wave equation \eqref{Phi_wave_equation} to prove $\laplacian\Phi_{tt}, \gradient\laplacian\Phi_t, \laplacian^2\Phi \in Y_0$ and then conclude $\Phi \in Y_4$.
\subsection{Strichartz Estimate on $\square\Phi_{ttt}$}\label{subsec:Y_4_strichartz_estimate}
Taking the derivative of \eqref{Phi_tt_wave_equation} with respect to $t$, we obtain the following wave equation for $\Phi_{ttt}$:
\begin{equation}\label{Phi_ttt_wave_equation}\square\Phi_{ttt} = \alpha^{-2}\[-6A_1^{-4}\(\partial_tA_1\)^2\Phi_t + 2A_1^{-3}\partial_{tt}A_1\Phi_t + 4A_1^{-3}\partial_tA_1\Phi_{tt} + \(1 - A_1^{-2}\)\Phi_{ttt}\]\period\end{equation}
Therefore we have the following Strichartz estimate for $\Phi_{ttt}$:
\begin{equation}\label{Phi_ttt_strichartz_estimate}\begin{array}{l}\norm{\Phi_{ttt}}_{L^pL^\frac{4p}{p - 1}} + \norm{\gradient\Phi_{ttt}}_{Y_0} + \norm{\Phi_{tttt}}_{Y_0} \lesssim \\ \lesssim \norm{\gradient\Phi_{ttt}\(0\)}_{L^2} + \norm{\Phi_{tttt}\(0\)}_{L^2} + \norm{A_1^{-4}\(\partial_tA_1\)^2\Phi_t}_{L^1L^2} + \\ + \norm{A_1^{-3}\partial_{tt}A_1\Phi_t}_{L^1L^2} + \norm{A_1^{-3}\partial_tA_1\Phi_{tt}}_{L^1L^2} + \norm{\(1 - A_1^{-2}\)\Phi_{ttt}}_{L^1L^2}\comma \, p \in \[2, \infty\]\period\end{array}\end{equation}
\subsection{Estimate for $\norm{A_1^{-4}\(\partial_tA_1\)^2\Phi_t}_{L^1L^2}$}\label{subsec:Y_4_strichartz_estimate_third_term}
We first upgrade our estimates for $\Phi$, using the fact now that $\Phi \in Y_3$, from \eqref{Phi_H3} to
\begin{equation}\label{Phi_H4}\abs{\Phi} \lesssim \jbracket{r}^{-\frac{3}{2}}\comma \quad \Phi \in L^pL^q\comma \, \(p, q\) \in \[1, \infty\] \times \[2, \infty\]\period\end{equation}
Therefore we upgrade \eqref{v_H3} to
\begin{equation}\label{v_H4}\abs{v} \lesssim \abs{\Phi} + \chi_\infty r^{-3} \lesssim \jbracket{r}^{-\frac{3}{2}}\comma \quad v \in L^pL^q\comma \, \(p, q\) \in \[1, \infty\] \times \[2, \infty\]\period\end{equation}
We next upgrade our estimates for $\Phi_t$, using the fact now that $\Phi_t \in Y_2$, from \eqref{Phi_t_H3} to
\begin{equation}\label{Phi_t_H4}\abs{\Phi_t} \lesssim_\epsilon \chi_0r^{-\epsilon} + \chi_\infty r^{-\frac{3}{2}}\comma \, \epsilon > 0\comma \quad \Phi_t \in L^pL^q\comma \, \(p, q\) \in \[1, \infty\] \times \[2, \infty\)\period\end{equation}
Therefore we upgrade \eqref{v_t_H3} to
\begin{equation}\label{v_t_H4}\abs{v_t} \lesssim \abs{\Phi_t} \lesssim_\epsilon \chi_0r^{-\epsilon} + \chi_\infty r^{-\frac{3}{2}}\comma \, \epsilon > 0\comma \quad v_t \in L^pL^q\comma \, \(p, q\) \in \[1, \infty\] \times \[2, \infty\)\period\end{equation}
Using \eqref{v_H4} and \eqref{v_t_H4}, we now upgrade \eqref{dt_A_1_H3} to
\begin{equation}\label{dt_A_1_H4}\ob\begin{array}{ll}\abs{\partial_tA_1} \lesssim \abs{v}\abs{v_t} \lesssim_\epsilon \chi_0r^{-\epsilon} + \chi_\infty r^{-3}\comma \, \epsilon > 0\comma & \partial_tA_1 \in L^pL^q\comma \, \(p, q\) \in \[1, \infty\] \times \[1, \infty\)\comma \\ \abs{\partial_tA_1\(0\)} \lesssim \jbracket{r}^{-3}\comma & \partial_tA_1\(0\) \in L^q\comma \, q \in \[1, \infty\]\period\end{array}\right.\end{equation}
It follows directly from \eqref{A_1^-sigma_H2}, \eqref{dt_A_1_H4}, and \eqref{Phi_t_H4} that the third term on the RHS of \eqref{Phi_ttt_strichartz_estimate} can be estimated by
\begin{equation}\label{Phi_ttt_strichartz_estimate_third_term}\norm{A_1^{-4}\(\partial_tA_1\)^2\Phi_t}_{L^1L^2} \lesssim 1\period\end{equation}
\subsection{Estimate for $\norm{A_1^{-3}\partial_{tt}A_1\Phi_t}_{L^1L^2}$}\label{subsec:Y_4_strichartz_estimate_fourth_term}
We next upgrade our estimates for $\Phi_{tt}$, using the fact now that $\Phi_{tt} \in Y_1$ and the Strichartz estimate \eqref{Phi_tt_strichartz_estimate_result}, from \eqref{Phi_tt_H3} to
\begin{equation}\label{Phi_tt_H4}\abs{\Phi_{tt}} \lesssim \chi_0r^{-1} + \chi_\infty r^{-\frac{3}{2}}\comma \quad \Phi_{tt} \in L^pL^q\comma \, p \in \[1, \infty\]\comma \, q \in \[2, \min\ob8, \frac{4p}{p - 1}\cb\]\period\end{equation}
Therefore we upgrade \eqref{v_tt_H3} to
\begin{equation}\label{v_tt_H4}\abs{v_{tt}} \lesssim \abs{v}\abs{v_t}\abs{\Phi_t} + \abs{\Phi_{tt}} \lesssim \chi_0r^{-1} + \chi_\infty r^{-\frac{3}{2}}\comma \quad v_{tt} \in L^pL^q\comma \, p \in \[1, \infty\]\comma \, q \in \[2, \min\ob8, \frac{4p}{p - 1}\cb\]\period\end{equation}
Using \eqref{v_H4}, \eqref{v_tt_H4}, and \eqref{v_t_H4}, we upgrade \eqref{dtt_A_1_H3} to
\begin{equation}\label{dtt_A_1_H4}\ob\begin{array}{ll}\abs{\partial_{tt}A_1} \lesssim \abs{v}\abs{v_{tt}} + v_t^2 \lesssim_\epsilon \chi_0r^{-1} + \chi_\infty r^{-3}\comma & \partial_{tt}A_1 \in L^pL^q\comma \, p \in \[1, \infty\] \comma \, q \in \[1, \min\ob8, \frac{4p}{p - 1}\cb\]\comma \\ \abs{\partial_{tt}A_1\(0\)} \lesssim_\epsilon \chi_0r^{-\epsilon} + \chi_\infty r^{-3}\comma \, \epsilon > 0\comma & \partial_{tt}A_1\(0\) \in L^q\comma \, q \in \[1, \infty\)\period\end{array}\right.\end{equation}
It follows directly from \eqref{A_1^-sigma_H2}, \eqref{dtt_A_1_H4}, and \eqref{Phi_t_H4} that the fourth term on the RHS of \eqref{Phi_ttt_strichartz_estimate} can be estimated by
\begin{equation}\label{Phi_ttt_strichartz_estimate_fourth_term}\norm{A_1^{-3}\partial_{tt}A_1\Phi_t}_{L^1L^2} \lesssim 1\period\end{equation}
\subsection{Estimate for $\norm{A_1^{-3}\partial_tA_1\Phi_{tt}}_{L^1L^2}$}\label{subsec:Y_4_strichartz_estimate_fifth_term}
It follows directly from \eqref{A_1^-sigma_H2}, \eqref{dt_A_1_H4}, and \eqref{Phi_tt_H4} that the fifth term on the RHS of \eqref{Phi_ttt_strichartz_estimate} can be estimated by
\begin{equation}\label{Phi_ttt_strichartz_estimate_fifth_term}\norm{A_1^{-3}\partial_tA_1\Phi_{tt}}_{L^1L^2} \lesssim 1\period\end{equation}
\subsection{Estimate for $\norm{\(1 - A_1\)^{-2}\Phi_{ttt}}_{L^1L^2}$}\label{subsec:Y_4_strichartz_estimate_sixth_term}
For $\Phi_{ttt}$ we have no pointwise estimates, but $\Phi_{ttt} \in Y_0$ implies
\begin{equation}\label{Phi_ttt_H4}\Phi_{ttt} \in L^pL^2\comma \, p \in \[1, \infty\]\period\end{equation}
It follows directly from \eqref{1_-_A_1^-sigma_H2} and \eqref{Phi_ttt_H4} that the sixth term on the RHS of \eqref{Phi_ttt_strichartz_estimate} can be estimated by
\begin{equation}\label{Phi_ttt_strichartz_estimate_sixth_term}\norm{\(1 - A_1^{-2}\)\Phi_{ttt}}_{L^1L^2} \lesssim 1\period\end{equation}
\subsection{Estimate for $\norm{\gradient\Phi_{ttt}\(0\)}_{L^2}$}\label{subsec:Y_4_strichartz_estimate_first_term}
Taking the gradient of \eqref{Phi_ttt_calc_H3}, we obtain
\begin{equation}\label{gradient_Phi_ttt_calc_H4}\gradient\Phi_{ttt} = \ob\begin{array}{l}\frac{3}{8}A_1^{-\frac{5}{2}}\(\partial_tA_1\)^2v_t\gradient A_1 - \frac{1}{2}A_1^{-\frac{3}{2}}\partial_tA_1v_t\gradient\partial_tA_1 - \frac{1}{4}A_1^{-\frac{3}{2}}\(\partial_tA_1\)^2\gradient v_t - \\ - \frac{1}{4}A_1^{-\frac{3}{2}}\partial_{tt}A_1v_t\gradient A_1 + \frac{1}{2}A_1^{-\frac{1}{2}}v_t\gradient\partial_{tt}A_1 + \frac{1}{2}A_1^{-\frac{1}{2}}\partial_{tt}A_1\gradient v_t - \frac{1}{2}A_1^{-\frac{3}{2}}\partial_tA_1v_{tt}\gradient A_1 + \\ + A_1^{-\frac{1}{2}}v_{tt}\gradient\partial_tA_1 + A_1^{-\frac{1}{2}}\partial_tA_1\gradient v_{tt} + \frac{1}{2}A_1^{-\frac{1}{2}}v_{ttt}\gradient A_1 + A_1^\frac{1}{2}\gradient v_{ttt}\period\end{array}\right.\end{equation}
We next upgrade our estimates for $\gradient\Phi$, using the fact now that $\gradient\Phi \in Y_2$, from \eqref{gradient_Phi_H3} to
\begin{equation}\label{gradient_Phi_H4}\abs{\gradient\Phi} \lesssim_\epsilon \chi_0r^{-\epsilon} + \chi_\infty r^{-\frac{3}{2}}\comma \, \epsilon > 0\comma \quad \gradient\Phi \in L^pL^q\comma \, \(p, q\) \in \[1, \infty\] \times \[2, \infty\)\period\end{equation}
Therefore we upgrade \eqref{gradient_v_H3} to
\begin{equation}\label{gradient_v_H4}\abs{\gradient v} \lesssim \abs{\gradient\Phi} + r\abs{v}^5 + \chi_\infty r^{-2}\abs{v} \lesssim_\epsilon \chi_0r^{-\epsilon} + \chi_\infty r^{-\frac{3}{2}}\comma \, \epsilon > 0\comma \quad \gradient v \in L^pL^q\comma \, \(p, q\) \in \[1, \infty\] \times \[2, \infty\)\period\end{equation}
Using \eqref{v_H4} and \eqref{gradient_v_H4}, we now upgrade \eqref{gradient_A_1_H3} to
\begin{equation}\label{gradient_A_1_H4}\ob\begin{array}{ll}\abs{\gradient A_1} \lesssim \abs{v}\abs{\gradient v} \lesssim_\epsilon \chi_0r^{-\epsilon} + \chi_\infty r^{-3}\comma \, \epsilon > 0\comma & \gradient A_1 \in L^pL^q\comma \, \(p, q\) \in \[1, \infty\] \times \[1, \infty\)\comma \\ \abs{\gradient A_1\(0\)} \lesssim \abs{v\(0\)}\abs{\gradient v\(0\)} \lesssim \jbracket{r}^{-3}\comma & \gradient A_1\(0\) \in L^q\comma \, q \in \[1, \infty\]\period\end{array}\right.\end{equation}
Using \eqref{v_H4}, we also upgrade \eqref{A_1^sigma_H3} to
\begin{equation}\label{A_1^sigma_H4}0 \le A_1^\sigma \lesssim 1+ v^{2\sigma} \lesssim 1\comma \quad A_1^\sigma \in L^pL^\infty\comma \, \(p, \sigma\) \in \[1, \infty\] \times \[0, \infty\]\period\end{equation}
It follows from \eqref{gradient_Phi_ttt_calc_H4}, \eqref{A_1^-sigma_H2}, \eqref{dt_A_1_H4}, \eqref{v_t_0_H2}, \eqref{gradient_A_1_H4}, and \eqref{A_1^sigma_H4} that
\begin{equation}\label{gradient_Phi_ttt_H4}\abs{\gradient\Phi_{ttt}\(0\)} \lesssim \ob\begin{array}{l}\abs{\gradient A_1\(0\)} + \abs{\gradient\partial_tA_1\(0\)} + \abs{\gradient v_t\(0\)} + \abs{\partial_{tt}A_1\(0\)} + \abs{\gradient\partial_{tt}A_1\(0\)} + \\ + \abs{v_{tt}\(0\)} + \abs{\gradient v_{tt}\(0\)} + \abs{v_{ttt}\(0\)} + \abs{\gradient v_{ttt}\(0\)}\period\end{array}\right.\end{equation}
All of the terms on the RHS except the fifth and ninth we already have under control. For the fifth term, three direct differentiations of $A_1$, a Maclaurin analysis, \eqref{v_t_0_H2}, \eqref{gradient_v_t_0_H2}, \eqref{gradient_v_0_H2}, \eqref{v_tt_0_H2}, \eqref{v_0_H2}, and \eqref{gradient_v_tt_0_H3} show
\begin{equation}\label{gradient_dtt_A_1_H4}\ob\begin{array}{l}\abs{\gradient\partial_{tt}A_1\(0\)} \lesssim \abs{v_t\(0\)}\abs{\gradient v_t\(0\)} + \abs{\gradient v\(0\)}\abs{v_{tt}\(0\)} + \abs{v\(0\)}\abs{\gradient v_{tt}\(0\)} \lesssim \chi_0r^{-1} + \chi_\infty r^{-3}\comma \\ \gradient\partial_{tt}A_1\(0\) \in L^q\comma \, q \in \[1, 4\)\period\end{array}\right.\end{equation}
For the ninth term on the RHS of \eqref{gradient_Phi_ttt_H4}, we differentiate \eqref{v_wave_equation} with respect to $t$, take the gradient, and solve for $\gradient v_{ttt}$ to obtain
\begin{equation}\label{gradient_v_ttt_calc_H4}\abs{\gradient v_{ttt}\(0\)} \le \abs{\gradient\laplacian v_t\(0\)} + \abs{\gradient\partial_t\[F\(v\)\]\(0\)}\period\end{equation}
By our initial conditions,
\begin{equation}\label{gradient_laplacian_v_t_0_H4}\gradient\laplacian v_t\(0\) \in L^2\period\end{equation}
To estimate the $\gradient\partial_t\[F\(v\)\]\(0\)$ term, we note that our initial conditions imply
\begin{equation}\label{gradient_v_rt_0_H4}\abs{\gradient v_{rt}\(0\)} \lesssim \chi_0r^{-1} + \chi_\infty r^{-\frac{3}{2}}\comma \quad \gradient v_{rt}\(0\) \in L^q\comma \, q \in \[2, 4\)\period\end{equation}
Using \eqref{A_1^-sigma_H2}, \eqref{dt_A_1_H4}, \eqref{gradient_A_1_H4}, \eqref{gradient_dt_A_1_H3}, \Cref{F_j_analyticity}, \eqref{v_0_H2}, \eqref{v_t_0_H2}, \eqref{gradient_v_0_H2}, \eqref{gradient_v_t_0_H2}, \eqref{v_r_0_H2}, \eqref{v_tt_0_H2}, \eqref{v_rt_0_H3}, \eqref{gradient_v_r_0_H3}, \eqref{gradient_v_tt_0_H3}, \eqref{gradient_v_rt_0_H4}, and \eqref{1_-_A_1^-sigma_H2}, we calculate eventually that
\begin{equation}\label{gradient_FFFF_t_H4}\abs{\gradient\partial_t\[F\(v\)\]\(0\)} \lesssim \chi_0r^{-1} + \chi_\infty r^{-\frac{7}{2}}\comma \quad \gradient\partial_t\[F\(v\)\]\(0\) \in L^q\comma \, q \in \(1, 4\)\period\end{equation}
Putting \eqref{gradient_v_ttt_calc_H4}, \eqref{gradient_laplacian_v_t_0_H4}, and \eqref{gradient_FFFF_t_H4} together, we obtain
\begin{equation}\label{gradient_v_ttt_0_H4}\gradient v_{ttt}\(0\) \in L^2\period\end{equation}
It now follows from \eqref{gradient_Phi_ttt_H4}, \eqref{gradient_A_1_H4}, \eqref{gradient_dt_A_1_H3}, \eqref{gradient_v_t_0_H2}, \eqref{dtt_A_1_H4}, \eqref{gradient_dtt_A_1_H4}, \eqref{v_tt_0_H2}, \eqref{gradient_v_tt_0_H3}, \eqref{v_ttt_0_H3}, and \eqref{gradient_v_ttt_0_H4} that the first term on the RHS of \eqref{Phi_ttt_strichartz_estimate} can be estimated by
\begin{equation}\label{Phi_ttt_strichartz_estimate_first_term}\norm{\gradient\Phi_{ttt}\(0\)}_{L^2} \lesssim 1\period\end{equation}
\subsection{Estimate for $\norm{\Phi_{tttt}\(0\)}_{L^2}$}\label{subsec:Y_4_strichartz_estimate_second_term}
Taking the derivative of \eqref{Phi_ttt_calc_H3} with respect to $t$, we obtain
\begin{equation}\label{Phi_tttt_calc_H4}\Phi_{tttt} = \ob\begin{array}{l}\frac{3}{8}A_1^{-\frac{5}{2}}\(\partial_tA_1\)^3v_t - \frac{3}{4}A_1^{-\frac{3}{2}}\partial_tA_1\partial_{tt}A_1v_t - \frac{3}{4}A_1^{-\frac{3}{2}}\(\partial_tA_1\)^2v_{tt} + \\ + \frac{1}{2}A_1^{-\frac{1}{2}}\partial_{ttt}A_1v_t + \frac{3}{2}A_1^{-\frac{1}{2}}\partial_{tt}A_1v_{tt} + \frac{3}{2}A_1^{-\frac{1}{2}}\partial_tA_1v_{ttt} + A_1^\frac{1}{2}v_{tttt}\period\end{array}\right.\end{equation}
It follows from \eqref{Phi_tttt_calc_H4}, \eqref{A_1^-sigma_H2}, \eqref{dt_A_1_H4}, \eqref{v_t_0_H2}, and \eqref{A_1^sigma_H4} that
\begin{equation}\label{Phi_tttt_H4}\abs{\Phi_{tttt}\(0\)} \lesssim \abs{v_t\(0\)} + \abs{\partial_{tt}A_1\(0\)} + \abs{v_{tt}\(0\)} + \abs{\partial_{ttt}A_1\(0\)} + \abs{v_{ttt}\(0\)} + \abs{v_{tttt}\(0\)}\period\end{equation}
All of the terms on the RHS except the fourth and sixth we already have under control. For the fourth term, we first relate $v_{ttt}$ to $\Phi_{ttt}$ by differentiating \eqref{v_tt_calc_H3} with respect to $t$ to obtain
\begin{equation}\label{v_ttt_calc_H4}v_{ttt} = \frac{3}{4}A_1^{-\frac{5}{2}}\(\partial_tA_1\)^2\Phi_t -\frac{1}{2}A_1^{-\frac{3}{2}}\partial_{tt}A_1\Phi_t - A_1^{-\frac{3}{2}}\partial_tA_1\Phi_{tt} + A_1^{-\frac{1}{2}}\Phi_{ttt}\period\end{equation}
Combining \eqref{A_1^-sigma_H2}, \eqref{dt_A_1_H4}, \eqref{Phi_t_H4}, \eqref{dtt_A_1_H4}, \eqref{Phi_tt_H4}, \eqref{Phi_ttt_H4}, \eqref{v_t_H4}, \eqref{v_H4}, and \eqref{v_tt_H4} leads to
\begin{equation}\label{v_ttt_H4}\ob\begin{array}{l}\abs{v_{ttt}} \lesssim v_t^2\abs{\Phi_t} + \abs{v}\abs{v_{tt}}\abs{\Phi_t} + \abs{v}\abs{v_t}\abs{\Phi_{tt}} + \abs{\Phi_{ttt}} \lesssim_\epsilon \chi_0\(r^{-1 - \epsilon} + \abs{\Phi_{ttt}}\) + \chi_\infty\(r^{-\frac{9}{2}} + \abs{\Phi_{ttt}}\)\comma \\ v_{ttt} \in L^pL^2\comma \, p \in \[1, \infty\]\period\end{array}\right.\end{equation}
Now, three direct differentiations of $A_1$, \eqref{v_H4}, \eqref{v_ttt_H4}, \eqref{v_t_H4}, \eqref{v_tt_H4}, \eqref{v_0_H2}, \eqref{v_ttt_0_H3}, \eqref{v_t_0_H2}, and \eqref{v_tt_0_H2} show
\begin{equation}\label{dttt_A_1_H4}\ob\begin{array}{l}\abs{\partial_{ttt}A_1} \lesssim \abs{v}\abs{v_{ttt}} + \abs{v_t}\abs{v_{tt}} \lesssim_\epsilon \chi_0\(r^{-1 - \epsilon} + \abs{\Phi_{ttt}}\) + \chi_\infty\(r^{-3} + r^{-\frac{3}{2}}\abs{\Phi_{ttt}}\)\comma \, \epsilon > 0\comma \\ \partial_{ttt}A_1 \in L^pL^q\comma \, \(p, q\) \in \[1, \infty\] \times \[1, 2\]\comma \\ \abs{\partial_{ttt}A_1\(0\)} \lesssim \chi_0r^{-1} + \chi_\infty r^{-3}\comma \\ \partial_{ttt}A_1\(0\) \in L^q\comma \, q \in \[1, 4\)\period\end{array}\right.\end{equation}
For the sixth term on the RHS of \eqref{Phi_tttt_H4}, we twice differentiate \eqref{v_wave_equation} with respect to $t$ and solve for $v_{tttt}$ to obtain
\begin{equation}\label{v_tttt_calc_H4}\abs{v_{tttt}\(0\)} \le \abs{\laplacian v_{tt}\(0\)} + \abs{\partial_{tt}\[F\(v\)\]\(0\)} \le \abs{\laplacian^2v\(0\)} + \abs{\laplacian\[F\(v\)\]\(0\)} + \abs{\partial_{tt}\[F\(v\)\]\(0\)}\period\end{equation}
By our initial conditions,
\begin{equation}\label{laplacian^2_v_0_H4}\laplacian^2v\(0\) \in L^2\period\end{equation}
To estimate the $\laplacian\[F\(v\)\]\(0\)$ term, we note that our initial conditions imply
\begin{equation}\label{laplacian_v_r_0_H4}\abs{\laplacian v_r\(0\)} \lesssim \chi_0r^{-1} + \chi_\infty r^{-\frac{3}{2}}\comma \quad \laplacian v_r\(0\) \in L^q\comma \, q \in \[2, 4\)\period\end{equation}
Two direct differentiations of $A_1$, a Maclaurin analysis, \eqref{v_0_H2}, and \eqref{laplacian_v_0_H2} show
\begin{equation}\label{laplacian_A_1_H4}\abs{\laplacian A_1\(0\)} \lesssim \abs{v\(0\)}\abs{\laplacian v\(0\)} \lesssim_\epsilon \chi_0r^{-\epsilon} + \chi_\infty r^{-3}\comma \, \epsilon > 0\comma \quad \laplacian A_1\(0\) \in L^q\comma \, q \in \[1, \infty\)\period\end{equation}
Using \eqref{A_1^-sigma_H2}, \eqref{gradient_A_1_H4}, \eqref{laplacian_A_1_H4}, \Cref{F_j_analyticity}, \eqref{v_0_H2}, \eqref{gradient_v_0_H2}, \eqref{laplacian_v_0_H2}, \eqref{v_t_0_H2}, \eqref{v_r_0_H2}, \eqref{gradient_v_r_0_H3}, \eqref{laplacian_v_t_0_H3}, \eqref{laplacian_v_r_0_H4}, and \eqref{1_-_A_1^-sigma_H2}, we calculate eventually that
\begin{equation}\label{laplacian_FFFF_H4}\abs{\laplacian\[F\(v\)\]\(0\)} \lesssim \chi_0r^{-1} + \chi_\infty r^{-\frac{7}{2}}\comma \quad \laplacian\[F\(v\)\]\(0\) \in L^q\comma \, q \in \(1, 4\)\period\end{equation}
To estimate the $\partial_{tt}\[F\(v\)\]\(0\)$ term, we note that our initial conditions, \eqref{v_wave_equation}, and \eqref{gradient_FFFF_H3} together imply
\begin{equation}\label{v_rtt_0_H4}\abs{v_{rtt}\(0\)} \lesssim \chi_0r^{-1} + \chi_\infty r^{-\frac{3}{2}}\comma \quad \laplacian v_{rtt}\(0\) \in L^q\comma \, q \in \[2, 4\)\period\end{equation}
Using \eqref{A_1^-sigma_H2}, \eqref{dt_A_1_H4}, \eqref{dtt_A_1_H4}, \Cref{F_j_analyticity}, \eqref{v_0_H2}, \eqref{v_t_0_H2}, \eqref{v_tt_0_H2}, \eqref{v_r_0_H2}, \eqref{v_rt_0_H3}, \eqref{v_ttt_0_H3}, \eqref{v_rtt_0_H4}, and \eqref{1_-_A_1^-sigma_H2}, we calculate eventually that
\begin{equation}\label{FFFF_tt_H4}\abs{\partial_{tt}\[F\(v\)\]\(0\)} \lesssim \chi_0r^{-1} + \chi_\infty r^{-\frac{7}{2}}\comma \quad \partial_{tt}\[F\(v\)\]\(0\) \in L^q\comma \, q \in \(1, 4\)\period\end{equation}
Putting \eqref{v_tttt_calc_H4}, \eqref{laplacian^2_v_0_H4}, \eqref{laplacian_FFFF_H4}, and \eqref{FFFF_tt_H4} together, we obtain
\begin{equation}\label{v_tttt_0_H4}v_{tttt}\(0\) \in L^2\period\end{equation}
It now follows from \eqref{Phi_tttt_H4}, \eqref{v_t_0_H2}, \eqref{dtt_A_1_H4}, \eqref{v_tt_0_H2}, \eqref{dttt_A_1_H4}, \eqref{v_ttt_0_H3}, and \eqref{v_tttt_0_H4} that the second term on the RHS of \eqref{Phi_ttt_strichartz_estimate} can be estimated by
\begin{equation}\label{Phi_ttt_strichartz_estimate_second_term}\norm{\Phi_{tttt}\(0\)}_{L^2} \lesssim 1\period\end{equation}
\subsection{Completion of the proof that $\Phi \in Y_4$}\label{subsec:Y_4_completion}
Combining \eqref{Phi_ttt_strichartz_estimate}, \eqref{Phi_ttt_strichartz_estimate_first_term}, \eqref{Phi_ttt_strichartz_estimate_second_term}, \eqref{Phi_ttt_strichartz_estimate_third_term}, \eqref{Phi_ttt_strichartz_estimate_fourth_term}, \eqref{Phi_ttt_strichartz_estimate_fifth_term}, and \eqref{Phi_ttt_strichartz_estimate_sixth_term}, we arrive at
\begin{equation}\label{Phi_ttt_strichartz_estimate_result}\norm{\Phi_{ttt}}_{L^pL^\frac{4p}{p - 1}} + \norm{\gradient\Phi_{ttt}}_{Y_0} + \norm{\Phi_{tttt}}_{Y_0} \lesssim 1\comma \quad p \in \[2, \infty\]\period\end{equation}
This proves $\gradient\Phi_{ttt}, \Phi_{tttt} \in Y_0$, leaving us with only $\laplacian^2\Phi$, $\gradient\laplacian\Phi_t$, and $\laplacian\Phi_{tt}$ to analyze. Looking at \eqref{Phi_tt_wave_equation}, we obtain
\begin{equation}\label{laplacian_Phi_tt_H4_calc}\abs{\laplacian\Phi_{tt}} \lesssim \abs{\Phi_{tttt}} + \abs{A_1^{-3}\partial_tA_1\Phi_t} + \abs{\(1 - A_1^{-2}\)\Phi_{tt}}\period\end{equation}
By \eqref{Phi_ttt_strichartz_estimate_result}, \eqref{A_1^-sigma_H2}, \eqref{dt_A_1_H4}, \eqref{Phi_t_H4}, \eqref{1_-_A_1^-sigma_H2}, and \eqref{Phi_tt_H4} this proves
\begin{equation}\label{laplacian_Phi_tt_H4}\norm{\laplacian\Phi_{tt}}_{Y_0} \lesssim 1\period\end{equation}
This leaves us with only $\laplacian^2\Phi$ and $\gradient\laplacian\Phi_t$ to analyze. Now, looking at \eqref{Phi_t_wave_equation}, taking the gradient, and rearranging, we obtain
\begin{equation}\label{gradient_laplacian_Phi_t_calc_H4}\gradient\laplacian\Phi_t = \gradient\Phi_{ttt} + 2A_1^{-3}\Phi_t\gradient A_1 - \(1 - A_1^{-2}\)\gradient\Phi_t\period\end{equation}
This clearly implies
\begin{equation}\label{gradient_laplacian_Phi_t_calc_2_H4}\abs{\gradient\laplacian\Phi_t} \lesssim \abs{\gradient\Phi_{ttt}} + \abs{A_1^{-3}\Phi_t\gradient A_1} + \abs{\(1 - A_1^{-2}\)\gradient\Phi_t}\period\end{equation}
We next record an estimate for $\gradient\Phi_t$, using the fact now that $\gradient\Phi_t \in Y_1$:
\begin{equation}\label{gradient_Phi_t_H4}\abs{\gradient\Phi_t} \lesssim \chi_0r^{-1} + \chi_\infty r^{-\frac{3}{2}}\comma \quad \gradient\Phi_t \in L^pL^q\comma \, \(p, q\) \in \[1, \infty\] \times \[2, 4\)\period\end{equation}
By \eqref{gradient_laplacian_Phi_t_calc_H4}, \eqref{Phi_ttt_strichartz_estimate_result}, \eqref{A_1^-sigma_H2}, \eqref{Phi_t_H4}, \eqref{gradient_A_1_H4}, \eqref{1_-_A_1^-sigma_H2}, and \eqref{gradient_Phi_t_H4}, this proves
\begin{equation}\label{gradient_laplacian_Phi_t_H4}\norm{\gradient\laplacian\Phi_t}_{Y_0} \lesssim 1\period\end{equation}
This leaves us with only $\laplacian^2\Phi$ to analyze. Going back to the wave equation \eqref{Phi_wave_equation} for $\Phi$, taking its Laplacian, and solving for $\laplacian^2\Phi$, we obtain
\begin{equation}\label{laplacian^2_Phi_calc_H4}\abs{\laplacian^2\Phi} \lesssim \abs{\laplacian\Phi_{tt}} + \abs{\laplacian\Phi} + \laplacian\(\int_0^vA_5^{-\frac{3}{2}} \, dy\) + \laplacian\(\phi_{\ge \frac{1}{2}}r^{-3}\)\period\end{equation}
We next record an estimate for $\laplacian\Phi$, using the fact now that $\laplacian\Phi \in Y_1$:
\begin{equation}\label{laplacian_Phi_H4}\abs{\laplacian\Phi} \lesssim \chi_0r^{-1} + \chi_\infty r^{-\frac{3}{2}}\comma \quad \laplacian\Phi \in L^pL^q\comma \, \(p, q\) \in \[1, \infty\] \times \[2, 4\)\period\end{equation}
Making use of \eqref{laplacian_Phi_tt_H4_calc}, \eqref{laplacian_Phi_H4}, and \Cref{phi_ge_1_2_properties}, we conclude
\begin{equation}\label{laplacian^2_Phi_calc_2_H4}\norm{\laplacian^2\Phi}_{Y_0} \lesssim 1 + \norm{\laplacian\(\int_0^vA_5^{-\frac{3}{2}} \, dy\)}_{Y_0}\period\end{equation}
Now we compute directly that
\begin{equation}\label{laplacian_int_A_5_calc_H4}\laplacian\(\int_0^vA_5^{-\frac{3}{2}} \, dy\) = -3A_1^{-\frac{5}{2}}\gradient A_1 \cdot \gradient v + A_1^{-\frac{3}{2}}\laplacian v + \frac{15}{4}\int_0^vA_5^{-\frac{7}{2}}\abs{\gradient A_5}^2 \, dy - \frac{3}{2}\int_0^vA_5^{-\frac{5}{2}}\laplacian A_5 \, dy\period\end{equation}
By \eqref{gradient_v_H4},
\begin{equation}\label{v_r_H4}\abs{v_r} \lesssim \abs{\gradient\Phi} + r\abs{v}^5 + \chi_\infty r^{-2}\abs{v} \lesssim_\epsilon \chi_0r^{-\epsilon} + \chi_\infty r^{-\frac{3}{2}}\comma \, \epsilon > 0\comma \quad v_r \in L^pL^q\comma \, \(p, q\) \in \[1, \infty\] \times \[2, \infty\)\period\end{equation}
Now, using \eqref{A_1^-sigma_H2}, \Cref{F_j_analyticity}, \eqref{v_H4}, \eqref{v_t_H4}, \eqref{v_r_H4}, and \eqref{1_-_A_1^-sigma_H2}, we obtain
\begin{equation}\label{FFFF_H4}\abs{F\(v\)} \lesssim_\epsilon \chi_0r^{-\epsilon} + \chi_\infty r^{-\frac{7}{2}}\comma \, \epsilon > 0\comma \quad F\(v\) \in L^pL^q\comma \, \(p, q\) \in \[1, \infty\] \times \(1, \infty\)\period\end{equation}
Using \eqref{v_wave_equation}, \eqref{v_tt_H4}, and \eqref{FFFF_H4}, we obtain
\begin{equation}\label{laplacian_v_H4}\abs{\laplacian v} \lesssim \abs{v_{tt}} + \abs{F\(v\)} \lesssim \chi_0r^{-1} + \chi_\infty r^{-\frac{3}{2}}\comma \quad \laplacian v \in L^pL^q\comma \, p \in \[1, \infty\)\comma q \in \[2, \min\ob8, \frac{4p}{p - 1}\cb\]\period\end{equation}
Combining \eqref{laplacian_int_A_5_calc_H4}, \eqref{A_1^-sigma_H2}, \eqref{gradient_A_1_H4}, \eqref{gradient_v_H4}, and \eqref{laplacian_v_H4}, we have
\begin{equation}\label{laplacian_int_A_5_calc_2_H4}\norm{\laplacian\(\int_0^vA_5^{-\frac{3}{2}} \, dy\)}_{Y_0} \lesssim 1 + \norm{\int_0^v\abs{\gradient A_5}^2 \, dy}_{Y_0} + \norm{\int_0^v\laplacian A_5 \, dy}_{Y_0}\period\end{equation}
Maclaurin analysis shows $\abs{\gradient A_5}^2 \lesssim r^2y^8$ and $\abs{\laplacian A_5} \lesssim y^4$. It follows that $\int_0^v\abs{\gradient A_5}^2 \, dy \lesssim r^2\abs{v}^9 \lesssim r^2\jbracket{r}^{-\frac{27}{2}} \lesssim \jbracket{r}^{-\frac{23}{2}}$ and $\abs{\int_0^v\laplacian A_5 \, dy} \lesssim \abs{v}^5 \lesssim \jbracket{r}^{-\frac{15}{2}}$. Clearly, then, we can upgrade \eqref{laplacian_int_A_5_calc_2_H4} to
\begin{equation}\label{laplacian_int_A_5_calc_3_H4}\norm{\laplacian\(\int_0^vA_5^{-\frac{3}{2}} \, dy\)}_{Y_0} \lesssim 1\period\end{equation}
Now combining \eqref{laplacian^2_Phi_calc_2_H4} and \eqref{laplacian_int_A_5_calc_3_H4} gives
\begin{equation}\label{laplacian^2_Phi_H4}\norm{\laplacian^2\Phi}_{Y_0} \lesssim 1\period\end{equation}
By \eqref{Phi_ttt_strichartz_estimate_result}, \eqref{laplacian_Phi_tt_H4}, \eqref{gradient_laplacian_Phi_t_H4}, and \eqref{laplacian^2_Phi_H4}, then, we have proven $\Phi \in Y_4$.
\section{Final steps}\label{sec:final_steps}
The fact that $\Phi \in Y_4$ implies \eqref{Phi_t_H4} and \eqref{gradient_Phi_H4} can be upgraded to
\begin{alignat}{3}
\label{Phi_t_final}& \abs{\Phi_t} \lesssim \jbracket{r}^{-\frac{3}{2}}\comma & \quad & \Phi_t \in L^pL^q\comma \, \(p, q\) \in \[1, \infty\] \times \[2, \infty\]\comma \\
\label{gradient_Phi_final}& \abs{\gradient\Phi} \lesssim \jbracket{r}^{-\frac{3}{2}}\comma & \quad & \gradient\Phi \in L^pL^q\comma \, \(p, q\) \in \[1, \infty\] \times \[2, \infty\]\comma
\end{alignat}
respectively. In consequence of this, \eqref{v_t_H4} and \eqref{gradient_v_H4} can be upgraded to
\begin{alignat}{3}
\label{v_t_final}& \abs{v_t} \lesssim \abs{\Phi_t} \lesssim \jbracket{r}^{-\frac{3}{2}}\comma & \quad & v_t \in L^pL^q\comma \, \(p, q\) \in \[1, \infty\] \times \[2, \infty\]\comma \\
\label{gradient_v_final}& \abs{\gradient v} \lesssim \abs{\gradient\Phi} + r\abs{v}^5 \lesssim \jbracket{r}^{-\frac{3}{2}}\comma & \quad & \gradient v \in L^pL^q\comma \, \(p, q\) \in \[1, \infty\] \times \[2, \infty\]\comma
\end{alignat}
respectively. Now, \eqref{v_H4}, \eqref{v_t_final}, and \eqref{gradient_v_final} together imply $\(\jbracket{r}v, \jbracket{r}v_t, \jbracket{r}\gradient v\) \in L^\infty\(S_{I, 4}\)$. By \Cref{v_continuation_criterion_theorem}, this implies $T < T^*$. But since $T < \infty$ was chosen arbitrarily, we may now conclude $T^* = \infty$. Using the reasoning laid out in \Cref{u_gwp_remark}, this proves \Cref{u_global_existence_theorem}.
\section*{Acknowledgements}\label{sec:ack}
The author was supported in part by the National Science Foundation Career grant DMS-0747656.
\bibliographystyle{amsalpha}
\bibliography{references}

\providecommand{\bysame}{\leavevmode\hbox to3em{\hrulefill}\thinspace}
\providecommand{\MR}{\relax\ifhmode\unskip\space\fi MR }
% \MRhref is called by the amsart/book/proc definition of \MR.
\providecommand{\MRhref}[2]{%
  \href{http://www.ams.org/mathscinet-getitem?mr=#1}{#2}
}
\providecommand{\href}[2]{#2}
\begin{thebibliography}{GNZ13}

\bibitem[BS98]{MR1660167}
R.~A. Battye and P.~M. Sutcliffe, \emph{Knots as stable soliton solutions in a
  three-dimensional classical field theory}, Phys. Rev. Lett. \textbf{81}
  (1998), no.~22, 4798--4801.

\bibitem[BS99]{MR1809362}
\bysame, \emph{Solitons, links and knots}, R. Soc. Lond. Proc. Ser. A Math.
  Phys. Eng. Sci. \textbf{455} (1999), no.~1992, 4305--4331.

\bibitem[CO09]{MR2538202}
Y.~Cho and T.~Ozawa, \emph{Sobolev inequalities with symmetry}, Commun.
  Contemp. Math. \textbf{11} (2009), no.~3, 355--365.

\bibitem[Fad76]{MR1553682}
L.~D. Faddeev, \emph{Some comments on the many-dimensional solitons}, Lett.
  Math. Phys. \textbf{1} (1976), no.~4, 289--293.

\bibitem[GNZ13]{arXiv:1307.4721}
D.~A. Geba, K.~Nakanishi, and X.~Zhang, \emph{Sharp global regularity for the
  $2 + 1$-dimensional equivariant {F}addeev model}, preprint, arXiv:1307.4721,
  2013.

\bibitem[H{\"o}r97]{MR1466700}
L.~H{\"o}rmander, \emph{Lectures on nonlinear hyperbolic differential
  equations}, Math\'ematiques \& Applications (Berlin) [Mathematics \&
  Applications], vol.~26, Springer-Verlag, Berlin, 1997.

\bibitem[KF75]{MR0495948}
V.~E. Korepin and L.~D. Faddeev, \emph{Quantization of solitons}, Teoret. Mat.
  Fiz. \textbf{25} (1975), no.~2, 147--163.

\bibitem[Li12]{arXiv:1208.4977}
D.~Li, \emph{Global wellposedness of hedgehog solutions for the $\(3 + 1\)$
  {S}kyrme model}, Preprint, arXiv:1208.4977, 2012.

\bibitem[LLZ11]{MR2754038}
Z.~Lei, F.~H. Lin, and Y.~Zhou, \emph{Global solutions of the evolutionary
  {F}addeev model with small initial data}, Acta Math. Sin. (Engl. Ser.)
  \textbf{27} (2011), no.~2, 309--328.

\bibitem[MS04]{MR2068924}
N.~Manton and P.~Sutcliffe, \emph{Topological solitons}, Cambridge Monographs
  on Mathematical Physics, Cambridge University Press, Cambridge, 2004.

\bibitem[Sel01]{Selberg_-_Math_632}
S.~Selberg, \emph{Lecture notes for {M}ath 632}, a 2001 PDE course at Johns
  Hopkins University, 2001.

\bibitem[Ste05]{Sterbenz_-_2005}
J.~Sterbenz, \emph{Angular regularity and {S}trichartz estimates for the wave
  equation}, Int. Math. Res. Not. \textbf{4} (2005), 187--231.

\bibitem[Str77a]{MR0454365}
W.~A. Strauss, \emph{Existence of solitary waves in higher dimensions}, Comm.
  Math. Phys. \textbf{55} (1977), no.~2, 149--162.

\bibitem[Str77b]{MR0512086}
R.~S. Strichartz, \emph{Restrictions of {F}ourier transforms to quadratic
  surfaces and decay of solutions of wave equations}, Duke Math. J. \textbf{44}
  (1977), no.~3, 705--714.

\bibitem[STZ94]{MR1278351}
J.~Shatah and A.~S. Tahvildar-Zadeh, \emph{On the {C}auchy problem for
  equivariant wave maps}, Comm. Pure Appl. Math. \textbf{47} (1994), no.~5,
  719--754.

\end{thebibliography}
\end{document}